\newtheorem{theorem}{Theorem}[section]
\newtheorem{proposition}[theorem]{Proposition}
\newtheorem{lemma}[theorem]{Lemma}
\newcommand{\abs}[1]{\left| #1 \right|}
\numberwithin{equation}{section}
\title{Large Sets Avoiding Patterns}
\author{Robert Fraser \and Malabika Pramanik}
\subjclass[2010]{28A78, 28A80, 26B10, 05B30}
\begin{document}
\maketitle
\begin{abstract}
We construct subsets of Euclidean space of large Hausdorff dimension and full Minkowski dimension that do not contain nontrivial patterns described by the zero sets of functions. The results are of two types. Given a countable collection of $v$-variate vector-valued functions $f_q : (\mathbb{R}^{n})^v \to \mathbb{R}^m$ satisfying a mild regularity condition, we obtain a subset of $\mathbb{R}^n$ of Hausdorff dimension $\frac{m}{v-1}$ that avoids the zeros of $f_q$ for every $q$. We also find a set that simultaneously avoids the zero sets of a family of uncountably many functions sharing the same linearization. In contrast with previous work, our construction allows for non-polynomial functions as well as uncountably many patterns. In addition, it highlights the dimensional dependence of the avoiding set on $v$, the number of input variables.  
\end{abstract}
{\allowdisplaybreaks 
\section{Introduction}
\noindent Identification of geometric and algebraic patterns in large sets has been a focal point of interest in modern analysis, geometric measure theory and additive combinatorics. A fundamental and representative result in the discrete setting that has been foundational in the development of a rich thory is Szemer\'edi's theorem \cite{S75}, which states that every subset of the integers with positive asymptotic density contains an arbitrarily long arithmetic progression. There is now an abundance of similar results in the continuum setting, all of which guarantee existence of configurations under appropriate assumptions on size, often stated in terms of Lebesgue measure, Hausdorff dimension or Banach density. While this body of work has contributed significantly to our understanding of such phenomena, a complete picture concerning existence or avoidance of patterns in sets is yet to emerge. In this paper, we will be concerned with the ``avoidance'' aspect of the problem. Namely, given a function $f: \mathbb{R}^{n v} \to \mathbb{R}^m$ satisfying certain conditions, how large  a set $E \subset \mathbb{R}^n$ can one construct that carries no nontrivial solution of the equation $f(x_1, \ldots, x_v) = 0$? In other words, we aim to find as large a set $E$ as possible such that $f(x_1, \ldots, x_v)$ is nonzero for any choice of distinct points $x_1, \ldots, x_v \in E$. 
\vskip0.1in
\noindent In the discrete regime, results of this type can be traced back to Salem and Spencer \cite{SS42} and Behrend \cite{B46}, who identify large subsets of the integers avoiding progressions. The Euclidean formulation of this problem appears to be of relatively recent vintage. In \cite{K98}, Keleti constructs a subset $E$ of the real numbers of full Hausdorff dimension avoiding all nontrivial ``one-dimensional rectangles''. More precisely, this means that there exist no solutions of the equation $x_2 - x_1 - x_4 + x_3 = 0$ with $x_1 < x_2 \leq x_3 < x_4$, $x_i \in E$, $1 \leq i \leq 4$. In particular, such a set contains no nontrivial arithmetic progression, as can be seen by setting $x_2 = x_3$. A counterpoint to \cite{K98} is a result of \L aba and the second author \cite{LP09}, who have established existence of three-term  progressions in special ``random-like'' subsets of $\mathbb{R}$ that support measures satisfying an appropriate ball condition and a Fourier decay estimate. Higher dimensional variants of this theme may be found in \cite{CLP, HLP}. On the other hand, large Hausdorff dimensionality, while failing to ensure specific patterns, is sometimes sufficient to ensure existence or even abundance of certain configuration classes; see for instance the work of Iosevich et al \cite{{GI12}, {GILP15}, {BIT16}, {GIP16}}. Harangi, Keleti, Kiss, Maga, M\'ath\'e, Mattila, and Strenner in \cite{HKKMMMS13} show that sets of sufficiently large Hausdorff dimension contain points that generate specific angles. 
\vskip0.1in
\noindent Nonexistence of patterns such as the one proved by Keleti \cite{K98} is the primary focus of this article. A main contribution of \cite{K98} is best described as a Cantor-type construction with memory, where selection of basic intervals at each stage is contingent on certain selections made at a much earlier step of the construction, so as to defy certain algebraic relations from taking place. This idea has been instrumental in a large body of subsequent work involving nonexistence of configurations. For example, in \cite{K08}, Keleti uses this to show that for any countable set $A$, it is possible to construct a full-dimensional subset $E$ of $\mathbb{R}$ such that 
\[x_2 - x_1 + a(x_3 - x_2) = 0 \]
has no solutions for any $a \in A$ where $x_1, x_2$ and $x_3$ are distinct points in $E$. Maga \cite{M10} exploits this idea to demonstrate a full-dimensional subset $E \subset \mathbb{R}^n$ not containing the vertices of any parallelogram. Other results in this direction of considerable generality, extending their predecessors in \cite{{K98}, {K08}, {M10}}, are due to M\'ath\'e \cite{M12}. Given any countable collection of polynomials $p_j: \mathbb R^{nm_j} \rightarrow \mathbb R$ of degree at most $d$ with rational coefficients, the main result of \cite{M12} ensures the existence of a subset $E \subseteq \mathbb{R}^n$ of Hausdorff dimension $\frac{n}{d}$ such that $p_j(x_1, \ldots, x_{m_j})$ is nonzero for any choice of distinct points $x_1, \ldots, x_{m_j} \in E$. The same conclusion continues to hold if the polynomials $p_j$ are replaced by $p_j(\Phi_{j,1}(x_1), \cdots, \Phi_{j,m_j}(x_{m_j}))$, where $\Phi_{j,k}$ are $C^1$-diffeomorphisms of $\mathbb R^n$. Interestingly, the Hausdorff dimension bound in \cite{M12}, while depending on the ambient dimension $n$ and the maximum degree $d$ of the polynomials, is independent of the number of input vectors $m_j$ in $p_j$, which may continue to grow without bound.
\vskip0.1in
\noindent This paper uses similar ideas to present two results in a somewhat different direction. The first complements M\'ath\'e's result mentioned above. It applies to a countable family of functions $f:\mathbb R^{nv} \rightarrow \mathbb R$ with a fixed $v$ that are not necessarily polynomials with rational coefficients. Further, in contrast with \cite{M12}, the Hausdorff dimension of the obtained set depends on the number of vector variables $v$. The second result is of a perturbative flavour, and gives a set of positive Hausdorff dimension that simultaneously avoids zeros of \emph{all} functions with a common linearization and bounded higher-order terms. To the best of our knowledge, such uniform avoidance results are new. Some points of tenuous similarity may be found in \cite{HKKMMMS13}, where the authors construct sets that avoid angles within a specific range, but the ideas, methods and goals are very different.   

\subsection{Main results} 
Our first result is most general in dimension one, where we need very mild restrictions on the functions whose zeros we want to avoid. The  higher-dimensional, vector-valued version of this result applies with some additional restrictions. We state these two separately.  
\begin{theorem} \label{non-simul-thm}
For any $\eta > 0$ and integer $v \geq 3$, let $f_{q}: \mathbb{R}^v \to \mathbb{R}$ be a countable family of functions in $v$ variables with the following properties: 
\begin{enumerate}[(a)]
\item There exists $r_q < \infty$ such that $f_q \in C^{r_q}([0, \eta]^v)$,  
\vskip0.1in 
\item For each $q$, some partial derivative of $f_q$ of order $r_{q} \geq 1$ does not vanish at any point of $[0, \eta]^v$. 
\end{enumerate} 
\vskip0.1in 
\noindent Then there exists a set $E \subseteq [0, \eta]$ of Hausdorff dimension at least $\frac{1}{v-1}$ and Minkowski dimension 1 such that $f_{q}(x_1, \ldots, x_v)$ is not equal to zero for any $v$-tuple of distinct points $x_1, \ldots, x_v \in E$ and any function $f_{q}$.
\end{theorem}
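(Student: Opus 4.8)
The plan is to construct $E$ as a Cantor-type set \emph{with memory}, in the tradition of Keleti \cite{K98} and especially M\'ath\'e \cite{M12}, and then to read off the two dimension statements from the combinatorics of the construction. I would enumerate the $f_q$ and, at the $k$-th stage, attend to $f_1,\dots,f_k$ simultaneously, using that each $f_q$ is $C^{r_q}$ and, by hypothesis (b), that $|\partial^{\alpha_q} f_q|\ge\mu_q>0$ on the compact box $[0,\eta]^v$ for some multi-index $\alpha_q$ with $|\alpha_q|=r_q$. Since distinct points of a Cantor set eventually lie in distinct basic intervals, it suffices to preclude, for each $q$ and at every scale at which such a separation can first occur, that a solution of $f_q=0$ with distinct entries is ``completed'' at that scale; the non-distinct solutions are exactly the trivial ones we are permitted to keep.

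The set $E=\bigcap_k\mathcal E_k$ will be a nested intersection of finite unions $\mathcal E_k$ of closed intervals of a common length $\delta_k\downarrow 0$, each interval of $\mathcal E_{k+1}$ lying inside one of $\mathcal E_k$. There will be two kinds of refinement step. At ``cheap'' stages I keep essentially all $\delta_k/\delta_{k+1}$ subintervals of each parent; these serve to drive up the box-counting count, so that the number of intervals of $\mathcal E_k$ is $\delta_k^{-1+o(1)}$ and hence $\dim_{\mathrm M}E=1$. At a sparse sequence of ``expensive'' stages I invoke a geometrical lemma (below) to select only $\sim(\delta_k/\delta_{k+1})^{1/(v-1)}$ children per parent, in well-separated position, so as to kill the newly available zeros of the currently active $f_q$. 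The expensive stages are spaced so that the sum of their $\log(\delta_k/\delta_{k+1})$ is a vanishing fraction of $\log(1/\delta_k)$ (this is compatible with $\dim_{\mathrm M}E=1$ precisely because $v\ge 3$, so $\tfrac1{v-1}<1$), while they still recur and always pass a uniform branching lower bound of $(\delta_k/\delta_{k+1})^{1/(v-1)}$; together with the separation condition this yields, by the mass distribution principle applied to the natural measure on $E$, the bound $\dim_{\mathrm H}E\ge\frac1{v-1}$.

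The crux is the geometrical lemma: given an active $f_q$ and a parent interval $I\in\mathcal E_k$, one can choose $\gtrsim(\delta_k/\delta_{k+1})^{1/(v-1)}$ well-separated $\delta_{k+1}$-subintervals of $I$ so that $f_q$ does not vanish on any configuration formed from one of these subintervals in one distinguished slot and from intervals of a suitably earlier generation in the remaining $v-1$ slots. The analytic input is a one-variable sublevel-set estimate coming from (b): for fixed values of the other variables, and once those values are kept away from finitely many auxiliary sublevel sets of lower-order derivatives of $f_q$ (which the earlier generations can be arranged to avoid, an operation that is free dimensionally and accounts for the irrelevance of $r_q$), the restriction of $f_q$ to the distinguished variable has $O(r_q)$ zeros, each sitting in a window of length $\ll\delta_{k+1}$ after we account for the diameters of the frozen intervals. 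One then counts these forbidden windows over all relevant choices of the other $v-1$ intervals and over the finitely many split patterns of the $v$ points, and compares with the number of slots of the $\delta_{k+1}$-grid inside $I$: since $f_q=0$ is a single equation it leaves $v-1$ positional degrees of freedom against one coordinate confined to a short window, which forces $\delta_k/\delta_{k+1}$ to be of order $(\#\text{children})^{\,v-1}$ — this is exactly what produces the exponent $\tfrac1{v-1}$ — after which a greedy selection extracts the required well-separated children dodging every forbidden window.

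The hard part will be this geometrical lemma together with the multi-scale bookkeeping surrounding it, and I expect three points to demand real care. First, converting hypothesis (b) — a single, possibly \emph{mixed}, non-vanishing partial derivative — into the uniform one-variable sublevel-set estimate: in the mixed case one has to choose the distinguished variable correctly and iterate monotonicity down a chain of derivatives, discarding at each step a tiny sublevel set of the intermediate derivative and checking that all such sets can be simultaneously avoided by the earlier generations. Second, calibrating how far back the ``frozen'' generations must sit, in terms of the moduli of continuity of the finitely many active $f_q$, so that a point ranging over a tiny $\delta_{k+1}$-child genuinely confines the corresponding solution of $f_q=0$ to a window of length $o(\delta_{k+1})$, while still keeping the expensive stages sparse enough for $\dim_{\mathrm M}E=1$ and the branching uniformly $\ge(\delta_k/\delta_{k+1})^{1/(v-1)}$ for $\dim_{\mathrm H}E\ge\frac1{v-1}$. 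Third, organizing the separation structure of an arbitrary distinct $v$-tuple in $E$ so that it is provably caught at some finite expensive stage; this is where the interplay between the pattern in which the $v$ points separate and the requirement that the non-distinguished entries be pinned at genuinely coarser scales is the most delicate.
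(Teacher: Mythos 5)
Your proposal matches the paper's overall strategy — a Cantor set with a queue‑driven memory, a single‑scale avoidance lemma driven by hypothesis~(b) via a chain of derivatives, and a mass distribution computation yielding $\dim_{\mathrm H}E\ge\tfrac1{v-1}$ and $\dim_{\mathrm M}E=1$. Two points, however, deserve attention.

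First, the architecture. You propose alternating \emph{cheap} stages (keep all children, for Minkowski dimension) and \emph{expensive} stages (keep $\sim(\delta_k/\delta_{k+1})^{1/(v-1)}$ children, for zero avoidance), spaced so the expensive stages contribute a vanishing fraction of $\log$-scale. This would indeed deliver both dimension bounds — the computation you sketch is sound. The paper instead performs the cheap and expensive refinement \emph{simultaneously in every step}: it passes through an intermediate scale $N^{-1}$ where it keeps nearly all $N^{-1}$-subintervals (this is what gives $\dim_{\mathrm M}E=1$, read off from the distinguished slot $i_0$), and then, within each retained $N^{-1}$-subinterval, shrinks to a piece of length $\ell=c_1N^{1-v}$ (this is the expensive part producing the exponent $\tfrac1{v-1}$). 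Both designs work; yours is a legitimate alternative.

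Second, and more importantly, there is a real gap in your Geometrical Lemma as stated, exactly at the place you flag as your second concern. You want the $v-1$ non-distinguished slots to be ``frozen at a suitably earlier generation'' and simultaneously you want each forbidden window for the distinguished slot to have width $\ll\delta_{k+1}$ ``after we account for the diameters of the frozen intervals.'' These two requirements conflict: intervals from an earlier generation are \emph{coarser} than $\delta_{k+1}$, not finer, so the window produced by the implicit function theorem has width comparable to their diameter, which is $\gg\delta_{k+1}$. Conversely, if you shrink those slots to diameter $\ll\delta_{k+1}$ but let their positions range over a $\delta_{k+1}$-grid, the total forbidden length $(\delta_k/\delta_{k+1})^{v-1}\cdot\delta_{k+1}$ blows up for $v\ge3$. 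What makes the count close is a \emph{two-scale decoupling within a single avoidance step}: the non-distinguished slots are confined to pieces whose \emph{diameter} is $\ell=c_1N^{1-v}$ (so each window has width $O(c_1N^{1-v})$, via the Lipschitz bound on the implicit function supplied by the nonvanishing derivative), while their \emph{positions} lie on the much coarser $N^{-1}$-grid (so the number of tuples is $\lesssim N^{v-1}$, one window per tuple per $1/M$-cube). The total forbidden length is then $\lesssim c_1M$, a controllably small constant, and a pigeonhole over $N^{-1}$-subintervals of the distinguished slot finishes the selection. Without this separation of position scale and diameter scale, neither ``coarse frozen slots'' nor ``fine frozen slots'' gives a workable count; this is the lemma that has to be proved before the rest of your scaffolding can bear weight.
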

\vskip0.1in 
\begin{theorem} \label{non-simul-thm vector-valued}
Fix $\eta > 0$ and positive integers $m,n,v$ such that $v \geq 3$, and $m \leq n(v-1)$. Let $f_{q} : (\mathbb{R}^{n})^v \to \mathbb{R}^m$ be a countable family of $C^{2}$ functions with the following property: the derivative $Df_q$ has full rank on the zero set of $f_q$ for every $q$ on $[0, \eta]^{nv}$. 
\vskip0.1in
\noindent Then there exists a set $E \subseteq [0, \eta]^{n}$ of Hausdorff dimension at least $\frac{m}{v-1}$ and Minkowski dimension $n$ such that $f_{q}(x_1, \ldots, x_v)$ is not equal to zero for any $v$-tuple of distinct points $x_1, \ldots, x_v \in E^n$ and any function $f_{q}$.
\end{theorem}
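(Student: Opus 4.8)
The plan is to realize $E$ as a Cantor-type set $E=\bigcap_{k\ge 0}E_k$, with $E_0=[0,\eta]^n$ and each $E_k$ a finite union of closed axis-parallel cubes of a common sidelength $\ell_k$, where $\ell_k/\ell_{k+1}=N_{k+1}\in\mathbb{Z}$ and $N_k\to\infty$ is chosen recursively and \emph{extremely} fast: each $N_{k+1}$ may depend on everything built through stage $k$, on the $C^2$-norms and rank constants of finitely many $f_q$, and on the bookkeeping below. Most stages are ``active'': each cube of $E_k$ is split into its $N_{k+1}^n$ subcubes, of which only $\lceil N_{k+1}^{m/(v-1)}\rceil$ are retained, kept spread out and chosen to enforce avoidance. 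A sparse but infinite family of ``passive'' stages, inserted at superexponential scale gaps, retains instead all $N_{k+1}^n$ subcubes. Since $m\le n(v-1)$ makes $\tfrac{m}{v-1}\le n$, every per-stage retention exponent lies in $[\tfrac{m}{v-1},\,n]$, so the cube count of $E_k$ is at least $\ell_k^{-m/(v-1)+o(1)}$; the natural measure $\mu$ on $E$, uniform on the retained children of each selected cube, then satisfies $\mu(B(x,r))\le r^{\,m/(v-1)-o(1)}$, and the mass distribution principle gives $\dim_{\mathrm H}E\ge\tfrac{m}{v-1}$, while the passive stages push the cube count up to $\ell_k^{-n+o(1)}$ along a subsequence of scales, so the Minkowski dimension of $E$ equals $n$.

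The avoidance is produced by a memory device in the spirit of Keleti and M\'ath\'e. As a preliminary, by continuity and compactness, for each $q$ there is a neighborhood $U_q$ of the compact set $Z_q:=f_q^{-1}(0)\cap[0,\eta]^{nv}$ on which $Df_q$ keeps full rank with a uniform lower bound on its largest $m\times m$ minor; thus $Z_q\cap U_q$ is, with uniform $C^1$ bounds, the graph of $m$ of the $nv$ coordinates over the remaining $nv-m$, while off $U_q$ one has $f_q\ne 0$ outright, so only this graph is relevant. Along the active stages we enumerate all ``jobs'', a job consisting of an index $q$, a ``free slot'' $j\in\{1,\dots,v\}$, and cubes $(R_i)_{i\ne j}$ each occurring at some stage below the one assigned to the job (every such tuple appearing, always strictly after the $R_i$ are created). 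When the active stage $s$ assigned to a given job arrives, the retained children of the cubes of $E_{s-1}$ are chosen so that no retained cube lying in a descendant of the $j$-th slot meets
\[
W:=\pi_j\bigl(Z_q\cap(R_1\times\cdots\times R_{j-1}\times\mathbb{R}^n\times R_{j+1}\times\cdots\times R_v)\bigr)\subseteq\mathbb{R}^n,
\]
with $\pi_j$ the projection onto the $j$-th copy of $\mathbb{R}^n$. If there were distinct $x_1,\dots,x_v\in E$ with $f_q(x_1,\dots,x_v)=0$, then choosing $j$ to be the slot of the ``deepest'' point, so that the remaining $x_i$ sit in strictly coarser cubes $R_i$, the job built from these data would already have excluded the cube of $x_j$ --- a contradiction. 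Configurations whose points stay trapped together inside one cube at every stage are absorbed by running the enumeration over separated points rather than over cubes, exactly as in Keleti.

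The crux, and what I expect to be the principal obstacle, is the per-stage combinatorial lemma that this selection is always possible while retaining the prescribed $\lceil N_s^{m/(v-1)}\rceil$ children of \emph{every} cube of $E_{s-1}$, for the current job and without undoing any earlier one. The graph description shows that, at the scale in play, $W$ lies in an $O(\ell_{s-1})$-neighborhood of the image --- under a map with uniformly bounded bi-Lipschitz constants --- of a set of dimension at most $n(v-1)-m$ placed inside the $j$-th block; when $m>n$ this forces one to resolve the free block a few coordinates at a time across consecutive active stages, but the count is unaffected. Hence $W$ can meet only a controlled, ultimately summable proportion of the children, and a greedy (or random) choice, exploiting $m\le n(v-1)$ and the distinctness of the points, supplies the required subsets. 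Making this robust against the accumulation of infinitely many jobs is what forces the superexponential recursion defining the $N_k$, together with a simultaneous induction guaranteeing that the retained cubes remain a definite distance from every already-handled slice of every $Z_q$, so that the proportions discarded at successive active stages stay summable. Granting this lemma, the measure $\mu$ of the first paragraph certifies $\dim_{\mathrm H}E\ge\tfrac{m}{v-1}$ and the passive stages certify that the Minkowski dimension of $E$ is $n$.
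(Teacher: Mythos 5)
Your overall framework is the right one and matches the paper in spirit: a Cantor construction of Keleti--M\'ath\'e type, driven by an enumerated queue of ``jobs'' $(q,\text{slot},\text{cubes})$ that is populated as the construction proceeds, with each job processed at a stage far below the stage where its cubes were created, a natural measure giving the Hausdorff bound, and distinctness handled by waiting until the $v$ points land in distinct basic cubes. However, the per-stage selection lemma that you yourself flag as ``the crux'' is not merely a technical gap to be filled --- as stated it is false, and it is exactly here that the paper's proof does something structurally different.

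The claim that the projected bad set $W:=\pi_j\bigl(Z_q\cap\prod_{i\ne j}R_i\bigr)$ sits in an $O(\ell_{s-1})$-neighborhood of a set of dimension at most $n(v-1)-m$ inside the $j$-th copy of $\mathbb{R}^n$ is wrong in general. The zero set $Z_q$ has dimension $nv-m\ge n$ (this is exactly the hypothesis $m\le n(v-1)$), and constraining $x_i$ to a full-dimensional cube $R_i$ for $i\ne j$ does not lower its dimension, so $W$ is typically all of the relevant $n$-cube, or at least a positive fraction of it, not a thin set. (Already for $v=3$, $n=m=1$: the zero surface in $[0,1]^3$, sliced by $R_1\times R_2\times\mathbb{R}$ and projected to the third axis, fills an interval of length comparable to $\ell_{s-1}$.) Consequently ``retained children avoid $W$'' is unachievable for cubes of any sidelength once we insist on retaining a fixed positive proportion. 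The quantity $n(v-1)-m$ also cannot be a dimension of a subset of $\mathbb R^n$ unless $m\ge n(v-2)$, which is not assumed.

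What actually makes avoidance possible --- and this is the heart of the paper's Proposition \ref{non-simul-mainprop vector-valued} together with Lemma \ref{proj-lemma} --- is that at the stage when the job $(q,\mathbb{I})$ is processed, one refines \emph{all} $v$ slots simultaneously and \emph{sequentially}: subdivide each current basic cube into a $1/N$-grid, then within each $1/N$-cube of the constrained slots $i\ne v$ select a \emph{single} $\ell$-cube with $\ell=c_1 N^{-n(v-1)/m}\ll 1/N$. Each such restriction is a ``wafer'' which, by a pigeonhole count of the bad $\ell$-boxes, cuts the number of bad boxes by a factor $\approx N^n\ell^n$; after $v-1$ of these projections the residual bad set in the last slot occupies measure $\lesssim N^{n(v-1)}\ell^m$, and the choice $\ell\approx N^{-n(v-1)/m}$ is precisely what makes this $\ll N^{-n}$, so that a $(1-1/M)$ fraction of the $\ell$-cubes in the free slot survive. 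Your job structure, which fixes $R_i$ at coarse earlier scales and tries to dodge the one-shot projection $W$, never performs this simultaneous shrinking of the other $v-1$ coordinates to sub-$1/N$ scale, so the count never becomes favorable. The remark about ``resolving the free block a few coordinates at a time'' is a step toward the right idea but does not repair the scheme, because the constrained slots, not the free one, are where the shrinking has to happen.

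Two secondary points. First, the asymmetric child-count is a genuine feature, not an artifact: in the paper, the $v-1$ distinguished cubes retain one tiny $\ell$-cube per $1/N$-cube (this is what yields the exponent $m/(v-1)$ after taking $\log$), while \emph{all other} basic cubes retain essentially all their $\ell$-subcubes; your uniform ``retain $\lceil N^{m/(v-1)}\rceil$ children of every cube'' does not match this and obscures why the exponent is $m/(v-1)$. Second, your ``passive stages'' are an honest alternative device for Minkowski dimension $n$ (and they are compatible with the Frostman bound because $m/(v-1)\le n$), but they are unnecessary: the paper gets full Minkowski dimension directly from the facts that $(1-1/M)$ of the $1/N$-subcubes survive in every basic cube and that, in the free slot, the surviving set has measure $\ge(1-1/M)N^{-n}$ inside each surviving $1/N$-cube.
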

\vskip0.1in 
{\em{Remarks: }} \begin{enumerate}[(a)]
\item If one seeks to avoid zeros of say a single function $f$, Theorem \ref{non-simul-thm} is nontrivial only when the components of $\nabla f(x)$ sum to zero at every point $x$ in the zero set of $f$. If this is not the case, then there is necessarily some interval $I$ such that $f(x_1, \ldots, x_v)$ is nonzero for points $x_i$ in the interval $I$.
\vskip0.1in
\item The points $x_1, \cdots, x_v \in E$ that ensure $f(x_1, \ldots, x_v) \ne 0$ in Theorems \ref{non-simul-thm} and \ref{non-simul-thm vector-valued} are taken to be distinct. This assumption, while needed for the proof, is nonrestrictive for purpose of applications. In fact, one can typically augment the family $\{f_q\}$ by $\{g_q\}$, where the function $g_q$ equals $f_q$ with certain input variables coincident. For instance,  Keleti's function $f(x_1, x_2, x_3, x_4) = (x_2 - x_1) - (x_4 - x_3) = -x_1 + x_2 + x_3 - x_4$ identifies ``one-dimensional rectangles'' in general,  and three-term arithmetic progressions only if $x_2 = x_3$. In order to obtain a set using our set-up that avoids both, we would need to apply our Theorem \ref{non-simul-thm} to the collection $\{f, g\}$, where $g(x_1, x_2, x_3, x_4) =  f(x_1, x_2, x_2, x_4) = -x_1 + 2x_2 - x_4$. 
\vskip0.1in
\item While Theorems \ref{non-simul-thm} and \ref{non-simul-thm vector-valued} are sharp in certain instances, for example when $m = n(v-1)$, this need not be the case in general, as Keleti's example shows (our result would only ensure a set of Hausdorff dimension $1/3$). Even though our results do not recover those of \cite{{K08},{M10},{M12}} in all instances where these results are applicable, the Hausdorff dimension provided in Theorems \ref{non-simul-thm} and \ref{non-simul-thm vector-valued} offers new bounds in settings where previously none were available, for instance where the functions are non-polynomials with mild regularity. It also improves the bound given in \cite{M12} for polynomials with rational coefficients in the regime where the degree $d$ is much larger than the number of variables $v$. On the other hand for polynomials of low degree in low dimensions, the Hausdorff dimension obtained in \cite{M12} improves ours, obtaining the best bound when $d=1$. It is interesting to note that despite the differences in the results, the proof has many points of similarity with earlier work.
\vskip0.1in 
\item Another point worth noting is that for the quadratic polynomial $f(x_1, x_2, x_3) = (x_3-x_1) - (x_2-x_1)^2$, the set in $\mathbb R$ avoiding zeros of $f$ is guaranteed to be of Hausdorff dimension $\frac{1}{2}$, both according to \cite{M12} and Theorem \ref{non-simul-thm}. It is not known however whether this bound is optimal.  
\end{enumerate} 
\vskip0.2in
Our second result is about a set on which no function $f$ with a given linearization and controlled higher order term is zero.
\begin{theorem}\label{simul-thm} 
Given any constant $K > 0$ and a vector $\alpha \in \mathbb R^v$ such that 
and such that
\begin{equation} \label{alpha-cancellation}
\sum_{j=1}^v \alpha_j = 0,
\end{equation}  
there exists a positive constant $c(\alpha)$ and a set $E = E(K, \alpha) \subseteq [0,1]$ of Hausdorff dimension $c(\alpha) > 0$ with the following property. 
\vskip0.1in
\noindent The set $E$ does not contain any nontrivial solution of the equation \[f(x_1, \cdots, x_v) = 0, \qquad (x_1, \cdots, x_v) \text{ not all identical}, \] for any $C^2$ function $f$ of the form \begin{align} f(x_1, \cdots, x_v) &= \sum_{j=1}^{v} \alpha_j x_j + G(x_1, \cdots, x_v)  \label{f-form} \\ \text{  where } |G(x)| &\leq K \sum_{j=2}^{v}(x_j - x_1)^2.  \label{G}\end{align} 
\end{theorem}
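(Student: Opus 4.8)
The plan is to adapt the Cantor-with-memory construction used for Theorem \ref{non-simul-thm} (the $n=1$, $m=1$, countable case), but to replace the countable bookkeeping over individual functions with a single uniform estimate that handles the entire family at once. The key point is that, because of the cancellation hypothesis \eqref{alpha-cancellation} and the quadratic bound \eqref{G}, the value $f(x_1,\dots,x_v)$ is essentially determined by the \emph{differences} $x_j-x_1$, and in a dyadic block of side $\delta$ the linear part $\sum_j\alpha_j(x_j-x_1)$ has size $O(\delta)$ while the remainder $G$ has size $O(K\delta^2)$; hence for $\delta$ small the linear term dominates whenever the configuration is ``spread out'' at scale $\delta$. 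This is the mechanism that makes a purely linear obstruction argument survive the passage to all $C^2$ perturbations satisfying \eqref{G}.

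First I would set up the iterated construction: fix a rapidly growing sequence of scales $N_1 \ll N_2 \ll \cdots$, and at stage $k$ partition each surviving interval of length $\ell_{k-1}$ into $N_k$ equal subintervals, retaining a carefully chosen subset of them. The retained subintervals at stage $k$ will be an arithmetic-progression-like pattern (with common difference comparable to $\ell_{k-1}/N_k$) chosen so that, for every $v$-tuple of distinct retained intervals whose ``finest common scale'' is scale $k$, the linear form $\sum_{j=1}^v \alpha_j y_j$ with $y_j$ the left endpoints is bounded below in absolute value by a definite multiple of that scale. Here the hypothesis $\sum \alpha_j = 0$ is what makes such a choice possible (it is exactly the nontriviality condition flagged in Remark (a)): translating all the $y_j$ by a common amount does not change $\sum \alpha_j y_j$, so one only needs to control the relative positions, and one can use a Behrend/Keleti-type selection at each stage to keep $\sum \alpha_j y_j$ away from zero for all admissible difference vectors while retaining a fixed positive fraction $N_k^{1-o(1)}$ of intervals — this is where $c(\alpha)>0$, depending on $\alpha$ through the arithmetic structure of the admissible selection, enters.

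Next I would run the standard dimension lower bound: the retained set $E$ is the nested intersection, it is nonempty and compact, and a mass-distribution / Frostman argument on the natural measure gives $\dim_H E \geq c(\alpha)$ provided the scales $N_k$ grow fast enough (so the ``loss'' at each stage is negligible in the limit). Then I would verify the avoidance property. Given distinct $x_1,\dots,x_v\in E$, let $k$ be the first stage at which they do not all lie in a common retained interval; then the $x_j$ split among at least two retained intervals of length $\ell_k$, separated by at least $c\,\ell_{k-1}/N_k =: \Delta$. Writing $y_j$ for the left endpoint of the stage-$k$ interval containing $x_j$, we have $|x_j - y_j| \le \ell_k \ll \Delta$, and the stage-$k$ selection gives $\bigl|\sum_j \alpha_j y_j\bigr| \gtrsim \Delta$. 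Hence $\bigl|\sum_j \alpha_j x_j\bigr| \gtrsim \Delta$ after absorbing the $O(\ell_k)$ errors. On the other hand $|G(x)| \le K\sum_{j\ge2}(x_j-x_1)^2 \le K(v-1)\ell_{k-1}^2$, which is $\ll \Delta$ once $N_k$ is large relative to $\ell_{k-1}^{-1}$ — and crucially this estimate is uniform over all $f$ of the form \eqref{f-form}–\eqref{G}. Therefore $f(x_1,\dots,x_v) = \sum_j \alpha_j x_j + G(x) \ne 0$ for every admissible $f$ simultaneously.

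The main obstacle I anticipate is the combinatorial selection step: choosing, at each scale, a large arithmetic-like set of subintervals so that \emph{all} difference vectors of the left endpoints (weighted by $\alpha$) stay uniformly away from zero, while the retained proportion is large enough to force positive Hausdorff dimension. One has to balance the nonvanishing margin (which controls how much quadratic slack one can tolerate, hence how fast $N_k$ must grow) against the density of the retained set (which controls $c(\alpha)$); getting a clean dependence $c(\alpha)>0$ out of this — rather than something that degenerates — requires exploiting the specific structure of $\{0,1\}^v$-type difference patterns and the vanishing-sum condition, and is the technical heart of the argument. The remaining pieces (compactness, the Frostman bound, and the uniform quadratic error estimate) are routine given the framework already developed for Theorem \ref{non-simul-thm}.
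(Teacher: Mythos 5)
Your proposal departs significantly from the paper's actual argument, and the step you yourself flag as ``the technical heart'' is where the gap lies. The paper's proof of Theorem~\ref{simul-thm} does \emph{not} use the Cantor-with-memory machinery of Theorems~\ref{non-simul-thm} and \ref{non-simul-thm vector-valued}; it is a plain binary Cantor construction with no queue. At each stage an interval $I$ of length $\ell$ is split into its two halves $I_1,I_2$, and Proposition~\ref{mainprop-simul} supplies two well-separated subintervals $I_1'\subseteq I_1$, $I_2'\subseteq I_2$ of length $\epsilon\ell$ such that $|\alpha\cdot x|\geq\delta\ell$ whenever $x\in(I_1'\cup I_2')^v$ does not place all coordinates in a single $I_i'$. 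This is obtained by iterating Lemma~\ref{simul-lemma} over all $2^v-2$ nonempty strict subsets $\mathfrak{C}\subsetneq\{1,\ldots,v\}$ --- a short, continuous two-case argument that only uses $\sum_{j\in\mathfrak{C}}\alpha_j\neq 0$. The linear margin $\delta\ell_j$ then dominates $|G|\leq Kv\ell_j^2\leq\delta\ell_j/2$ once $\eta$ is chosen small, uniformly over all admissible $f$, and the dimension $\log 2/\log(1/\epsilon)$ falls out of the standard binary-tree mass distribution.

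Your plan instead retains a large Behrend/Keleti-style subset of roughly $N_k^{1-o(1)}$ children per parent so that $|\sum_j\alpha_j y_j|$ of the retained left endpoints is bounded below by a definite multiple of the finest spacing. This is not carried out, and there is a genuine obstruction: for a general real $\alpha$ satisfying only the stated hypotheses, a Behrend-type selection does not give the quantitative lower bound you need. Behrend's and Keleti's constructions avoid \emph{exact zeros} of linear forms with rational (or at most countably many fixed) coefficients, and the lower bound $|\sum\alpha_j a_j|\geq 1$ on integer tuples comes from integrality, not from the avoidance itself. For irrational $\alpha$, nonvanishing of $\sum\alpha_j a_j$ on integer tuples carries no uniform lower bound, so the quadratic error $K(v-1)\ell_{k-1}^2$ cannot be absorbed at scale $\ell_{k-1}/N_k$. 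You would also need the selection to handle all multisets arising from coincidences $y_j=y_{j'}$ (which is where the subset-sum nonvanishing must enter), and to verify that the required margin is compatible with retaining a fraction near $1$ --- none of which is addressed. The paper sidesteps all of this by keeping only \emph{two} children per parent, reducing the selection to a finite, continuous problem (Lemma~\ref{simul-lemma} over $2^v-2$ cases) that needs no discretization and hence no number-theoretic input, and which works for every admissible real $\alpha$. The price is a smaller, $\alpha$-dependent dimension; but the theorem asks only for $c(\alpha)>0$, and the simpler route delivers exactly that.
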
 
\vskip0.1in
{\em{Remarks:}} \begin{enumerate}[(a)]
\item The condition \eqref{alpha-hypothesis} implies that $\alpha$ does not lie in any coordinate hyperplane.
\vskip0.1in
\item The proof of Theorem \ref{simul-thm} can be used to obtain a corresponding result with \emph{finitely} many linearizations. There is a loss in the Hausdorff dimension as more linear functions are added to the family, so the proof fails for families of functions with countably many linearizations.
\vskip0.1in
\item It is interesting to note that the dimensional constant $c(\alpha)$ does not depend on $K$. Of course the set $E$ does, and is uniform for all functions $f$ obeying \eqref{f-form} and \eqref{G} with a fixed value of $K$. 
\end{enumerate} 
\subsection{Layout} Section \ref{ex-section} is devoted to geometric applications of Theorems \ref{non-simul-thm}, \ref{non-simul-thm vector-valued} and \ref{simul-thm}. Optimality of these results (or lack theoreof) in various settings are discussed, and comparison with earlier work presented. Section \ref{Building-block-section} is a collection of geometric algorithms needed for the proofs of Theorems \ref{non-simul-thm} and \ref{non-simul-thm vector-valued}. The proofs themselves are executed in Sections \ref{Proofs of non-simul thms} and \ref{Proof of simul-thm}.  
  
\section{Examples}\label{ex-section}
\subsection{Subsets of curves avoiding isosceles triangles}
This subsection is given over to the following question: suppose we are given a small segment of a simple $C^2$ curve $\Gamma \subset \mathbb{R}^n$ with nonvanishing curvature $K$, parametrized by a $C^2$-function $\gamma : [0,\eta] \to \mathbb{R}^n$ with nonvanishing derivative. How large can the Hausdorff dimension of a subset $E \subseteq [0, \eta]$ be if there do not exist three points $x_1, x_2, x_3 \in E$ such that $\{\gamma(x_1), \gamma(x_2), \gamma(x_3)\} \subseteq \Gamma$ are the vertices of an isosceles triangle? 
\vskip0.1in
\noindent The existence of an isosceles triangle with vertices on $\Gamma$ will be determined using one of the following functions: 
\begin{equation} \label{def-f1} f_1(t_1, t_2, t_3) = |\gamma(t_1) - \gamma(t_2)|^2 - |\gamma(t_2) - \gamma(t_3)|^2, \end{equation} 
or 
\begin{equation} \label{def-f2} f_2(t_1, t_2, t_3) = d(\gamma(t_1), \gamma(t_2)) - d(\gamma(t_2), \gamma(t_3)). \end{equation} 
Here $d$ is the ``signed distance" along the curve $\Gamma$ defined by 
\begin{equation} \label{def-d} d(\gamma(t_1), \gamma(t_2)) = \begin{cases} |\gamma(t_1) - \gamma(t_2)| &\text{ if } t_1 > t_2 \\  - |\gamma(t_1) - \gamma(t_2)| &\text{ if } t_1 < t_2. \end{cases} \end{equation} 
For reasons to be explained shortly, we will want to avoid the zero set of $f_1$ or $f_2$. In order to apply Theorem \ref{non-simul-thm}, we need to verify that these functions are differentiable. This is evident for $f_1$. In Lemma \ref{d-differentiable} of the appendix, we have shown that the signed distance $d$ is differentiable, which provides the same conclusion for $f_2$.    
\vskip0.1in 
\noindent Let $f$ be either the function $f_1$ or $f_2$ given in \eqref{def-f1} or \eqref{def-f2}. In either case, we have that if $f(t_1, t_2, t_3) = 0$, then $\gamma(t_1), \gamma(t_2), \gamma(t_3)$ form the vertices of an isosceles triangle or points on an arithmetic progression. Conversely, let $x, y, z$ be distinct points of $\Gamma$ that form an isosceles triangle, with $|x-y| = |y-z|$. Then there exist $t_1 < t_2 < t_3$ such that some permutation of $\gamma(t_1), \gamma(t_2), \gamma(t_3)$ will be the points $x, y, z$. It is not difficult to see that if $\eta$ is sufficiently small depending on $|\gamma'(0)|$ and the curvature $K$, then $y$ can neither be $\gamma(t_1)$ or $\gamma(t_3)$. We include a proof of this in Lemma \ref{order-lemma} the appendix. Therefore $y = \gamma(t_2)$, in which case $f(t_1, t_2, t_3) = 0$.  

\subsubsection{A set avoiding isosceles triangles along a single curve}
We will first discuss the problem of avoiding isosceles triangles along a single curve $\Gamma$. For this variant of the problem, $\gamma$ may be any parametrization of $\Gamma$ satisfying the conditions laid out above. 
\vskip0.1in 
\noindent Let us first consider the case where $\Gamma$ is parameterized by a polynomial function $\gamma$ of degree $d$ with rational coefficients, i.e., $\gamma(t) = (p_1(t), p_2(t), \ldots, p_n(t))$. Let us observe that the result in \cite{M12}  does not apply to the non-polynomial function $f_2(t_1, t_2, t_3)$, but does apply to \begin{multline*} f_1(t_1, t_2, t_3) = \bigl[(p_1(t_1) - p_1(t_2))^2 + \cdots + (p_n(t_1) - p_n(t_2))^2\bigr] \\ - \bigl[(p_1(t_2) - p_1(t_3))^2 + \cdots + (p_n(t_2) - p_n(t_3))^2 \bigr],\end{multline*} which is a polynomial of degree at most $2d$. Applying \cite{M12} then gives a subset of $\Gamma$ of Hausdorff dimension $\frac{1}{2d}$ that does not contain the vertices of any isosceles triangle.
\vskip0.1in 
\noindent If $\Gamma$ is a general (not necessarily polynomial) $C^2$ curve with parameterization $\gamma(t)$, and $f(t_1, t_2, t_3)$ is either $f_1$ or $f_2$ described above, then Theorem \ref{non-simul-thm} demonstrates the existence of a subset $E$ of $[0,1]$ of Hausdorff dimension $\frac{1}{2}$ such that $f(t_1, t_2, t_3) \neq 0$ for any choice of $t_1, t_2, t_3 \in E$. Under $\gamma$, this lifts to a subset of $\Gamma$ of Hausdorff dimension $\frac{1}{2}$ that does not contain the vertices of an isosceles triangle. Even for the case of functions with a rational polynomial parametrization, this set has a larger Hausdorff dimension than the one provided by \cite{M12}.
\vskip0.1in
\noindent Incidentally, it is instructive to compare the above with the case where the curve $\gamma$ is a line, even though the curvature for the latter is zero. Here we will view three term arithmetic progressions as degenerate isosceles triangles. Set $\gamma(t) = at + b$ for some $a, b \in \mathbb R^n$, $a \ne 0$. Then the function $f(t_1, t_2, t_3) = t_1 + t_3 - 2t_2$ is equal to zero precisely when $\gamma(t_1), \gamma(t_2)$ and $\gamma(t_3)$ lie in arithmetic progression. Keleti's result \cite{K98} as well as \cite{M12} applied to this $f$ shows that there is a subset of $\Gamma$ of Hausdorff dimension $1$ that does not contain any arithmetic progressions. Theorem \ref{non-simul-thm} on the other hand provides a set with nonoptimal Hausdorff dimension $1/2$.

\subsubsection{A set avoiding isosceles triangles along all curves with bounded curvature} \label{example-all} 
We will also ask a question related to the one above, this time considering only $C^2$ curves given by arclength parametrization. How large a set $E \subset [0,1]$ can we construct such that $\gamma(E)$ does not contain any isosceles triangle for any $\gamma : [0,1] \to \mathbb{R}^n$ with $|\gamma^{\prime}(t)| \equiv 1$ and with curvature at most $K$? 
\vskip0.1in
\noindent For any such curve $\gamma$, the function $f_2$ defined in \eqref{def-f2} will be differentiable everywhere, with $\frac{\partial f_2}{\partial t_1} = \frac{\partial f_2}{\partial t_3} \equiv 1$ and $\frac{\partial f_2}{\partial t_2} \equiv -2$, as we have verified in Lemma \ref{d-differentiable} part(\ref{special-gamma}). Thus the function $f_2$ will satisfy the conditions of Theorem 1.3. One therefore obtains a subset $E \subset [0,1]$ of positive Hausdorff dimension such that $f_2(t_1, t_2, t_3) \ne 0$ whenever $t_1, t_2, t_3 \in E$ are distinct, no matter which $\gamma$ we choose in this class. Thus the points parametrized by $E$ manage to avoid isosceles triangles on all curves $\Gamma$ with a fixed bounded curvature.   
\vskip0.1in
\noindent How large a Hausdorff dimension can we get? A careful scrutiny of Lemma \ref{simul-lemma}, Proposition \ref{mainprop-simul} and Theorem \ref{simul-thm} shows that one can ensure sets of Hausdorff dimension at least $\frac{\log 2}{\log 3}$. For more details, we refer the reader to the proofs of these results in Section \ref{simul-thm-proof} and the remarks following them.  

\subsubsection{Discussion on optimality} \label{discussion-optimality}
Clearly Theorem \ref{non-simul-thm vector-valued} is optimal when $m = n(v-1)$. On the other hand, 
we can use Theorem \ref{simul-thm} together with the example above to give a polynomial with rational coefficients for which neither \cite{M12} nor Theorem \ref{non-simul-thm} give the optimal bounds. Consider a polynomial of the form \[p(t_1, t_2, t_3) = t_1 - 2 t_2 + t_3 + q(t_1, t_2, t_3)\] where $q(t_1, t_2, t_3)$ is a nontrivial homogeneous quadratic polynomial in $(t_2 - t_1)$ and $(t_3 - t_1)$ with rational coefficients. We are of course interested in finding a set $E$ (as large as possible) such that $p(t_1, t_2, t_3) \ne 0$ for any choice of distinct points $t_1, t_2, t_3 \in E$. Both \cite{M12} and Theorem \ref{non-simul-thm}  provide such a set $E$, with  dimension at least $1/2$ in both cases. Theorem \ref{simul-thm} provides such a set $E$ as well. Note that $p$ has the same linearization as the functions $f$ described in the previous section above. Hence, as described at the end of Section \ref{example-all}, the set $E$ obtained via Theorem \ref{simul-thm} is a set of dimension at least $\frac{\log 2}{\log 3} > 1/2$, proving the claimed suboptimality statement. 
\vskip0.1in
\noindent In fact, we can use this framework to construct other examples. Notice that it is possible to ask for sets $E$ that avoid triangles that are not necessarily isosceles, for instance triangles where the sidelength ratio is a prescribed constant $\kappa$. The results in \cite{{K98}, {M12}} and Theorem 1.1 all apply to give a set with the same Hausdorff dimension 1/2 as above not containing $t_1, t_2, t_3$ such that $|\gamma(t_2) - \gamma(t_1)| = \kappa |\gamma(t_3) - \gamma(t_1)|$. However, the Hausdorff dimension bound in Theorem \ref{simul-thm} becomes worse as $\kappa$ moves farther away from $1$. Still, for $\kappa$ close to $1$, Theorem \ref{simul-thm} outperforms Theorem \ref{non-simul-thm}, giving rise to a family of polynomials whose zeros can be avoided by a set of unusually large Hausdorff dimension. 

\subsection{A subset of a curve not containing certain kinds of trapezoids} \label{trapezoid-section}
The following is a geometric example of Theorem 1.2. Call a trapezoid $ABCD$ with $AD$ parallel to $BC$ ``special" if the sidelengths obey the restriction $|BC|^2 = |AB||CD|$.  Given a curve $\Gamma \subset \mathbb{R}^2$ parametrized by a function $\gamma:[0, \eta] \rightarrow \mathbb R$, we aim to find a subset $E$ of $[0, \eta]$ with the following property: for any choice of $t_1 < t_2 < t_3 < t_4$ in $E$, the trapezoid $ABCD$ with 
\[A = \gamma(t_1), \quad B = \gamma(t_2), \quad C = \gamma(t_3), \quad D = \gamma(t_4)\]
is not special. For simplicity and ease of exposition, we may assume that the components of $\gamma'$ are strictly positive on $[0, \eta]$ and that the curvature is also of constant sign, say $\Gamma$ is strictly convex.  
\vskip0.1in 

\noindent Notice that the special trapezoid assumption places two essentially independent conditions on $\gamma(t_1), \gamma(t_2), \gamma(t_3)$, and $\gamma(t_4)$.  One is that two sides need to be parallel and the other is the condition on the sidelengths. Accordingly we define two functions $f_1$ and $f_2$ as follows:  
\begin{align} f_1(t_1, t_2, t_3, t_4) &= \text{arg}(\gamma(t_4) - \gamma(t_1)) - \text{arg}(\gamma(t_3) - \gamma(t_2)) \label{def-f1-trapezoid} \\  
f_2(t_1, t_2, t_3, t_4) & = d(\gamma(t_4), \gamma(t_3)) d(\gamma(t_2), \gamma(t_1)) - d(\gamma(t_3), \gamma(t_2))^2. \label{def-f2-trapezoid}
\end{align} 
Here ``arg'' denotes the argument function defined as follows: for any $p \in \mathbb R^2 \setminus \{ p = (x,0): x \leq 0 \}$, the quantity arg$(p)$ is the unique angle between $(-\pi, \pi)$ that the line joining $(x,y)$ to the origin makes with the positive $x$ axis. Clearly, $f_1$ is zero if and only if $AD$ is parallel to $BC$, while $f_2$ is zero if and only if $|BC|^2 = |AB||CD|$. We therefore seek to avoid the zeros of $f = (f_1, f_2)$. We verify in Lemma \ref{Jacobian-check} of the appendix that the derivative $Df$ is of full rank on the zero set of $f$. Applying Theorem \ref{non-simul-thm vector-valued} with $n = 1, m=2, v = 4$, we obtain a set $E$ of Hausdorff dimension $2/3$ such the points on $\Gamma$ indexed by $E$ avoids special trapezoids as explained above.
%
%
%
Thus, there is a subset of $\Gamma$ of Hausdorff dimension $\frac{2}{3}$ that does not contain any special trapezoids.

\section{Avoidance of zeros on a single scale} \label{Building-block-section}
\noindent The proofs of Theorems \ref{non-simul-thm} and \ref{non-simul-thm vector-valued} are based on an iterative construction, whose primary building block relies on an algorithm: given a set $T \subseteq \mathbb R^{nv}$ contained in the domain of a suitably nonsingular function $f: \mathbb R^{nv} \rightarrow \mathbb R^m$, one identifies a subset $S \subseteq T$ that stays away from the zero set of $f$. This zero-avoiding subset $S$, which is a union of cubes in $\mathbb R^{nv}$ (and as such of positive Lebesgue measure and full Hausdorff dimension), does not immediately yield the set we seek because it is typically not the $v$-fold Cartesian product of a set in $\mathbb R^n$ with itself, and hence does not meet the specifications of the theorems.  However, the algorithm can be used iteratively on many different scales and for many functions in the construction of the set $E$ whose existence has been asserted in the theorems. Our objective in this section is to describe this algorithm. The versions that we need for Theorems \ref{non-simul-thm} and \ref{non-simul-thm vector-valued} are very similar in principle, although the exact statements differ somewhat. These appear in Propositions \ref{non-simul-mainprop} and \ref{non-simul-mainprop vector-valued} below respectively. 

\subsection{Building block in dimension one}

\noindent Let $f$ be a real-valued $C^1$ function of $v$ variables and nonvanishing gradient defined in a neighbourhood of the origin containing $[0,1]^v$. Suppose that we are given an index $i_0 \in \{1, 2, \cdots, v\}$, an integer $M \geq 1$, a small constant $c_0 > 0$ and compact subsets $T_1, \cdots, T_v \subseteq [0,1]$ with the following properties: 
\begin{align}
& \begin{aligned} &\text{$\bullet$ Each $T_i$ is a union of closed intervals of length $M^{-1}$ with disjoint} \\ &\text{interiors. Let us denote by $\mathcal J_M(T_i)$ this collection of intervals.} \end{aligned} \label{T-union} \\  
& \bullet \Bigl|\frac{\partial f}{\partial x_{i_0}}(x) \Bigr| \geq c_0 \text{ and } |\nabla f(x)| \leq c_0^{-1} \text{ for all } x \in T_1 \times \cdots \times T_v. \label{deriv-nonzero}
\end{align}       
\begin{proposition} \label{non-simul-mainprop}
Given $f, M, i_0, c_0$ and $\mathbb T = (T_1, \cdots, T_v)$ obeying \eqref{T-union} and \eqref{deriv-nonzero} above, there exist a constant $c_1 > 0$ and an integer $N_0$ depending on all these quantities such that for all integers $N \geq N_0$ that are integer multiples of $M$, the following conclusions hold. 
There exist compact subsets $S_i \subseteq T_i$ for all $1 \leq i \leq v$ such that  
\begin{enumerate}[(a)]
\item There are no solutions of $f(x) = 0$ with $x \in S_1 \times \cdots \times S_v$. \label{no solutions}
\vskip0.1in
\item For each $J \in \mathcal J_M(T_i)$, let us decompose $J$ into closed intervals of length $N^{-1}$ with disjoint interiors and call the resulting collection of intervals $\mathcal I_N(J,i)$. Then for each $i \ne i_0$ and each  $I \in \mathcal I_N(J, i)$, the set $S_i \cap I$ is an interval of length $c_1 N^{1-v}$.   \label{inoti0}
\vskip0.1in 
\item For every $J \in \mathcal J_{M}(T_{i_0})$, there exists $\mathcal I_{N}'(J, i_0) \subseteq \mathcal I_N(J, i_0)$ with 
\begin{equation} \label{I_N'I_N} \# \bigl( \mathcal I_{N}'(J, i_0) \bigr) \geq \bigl(1 - \frac{1}{M} \bigr) \# \bigl( \mathcal I_{N}(J, i_0) \bigr) \end{equation} 
such that for each $I \in \mathcal I_N'(J, i_0)$, 
\begin{equation}  |S_{i_0} \cap I| \geq \frac{c_1}{N}. \label{S_i0}\end{equation} 
Unlike part (\ref{inoti0}), $S_{i_0} \cap I$ need not be an interval; however, it can be written as a union of intervals of length $c_1 N^{1-v}$ with disjoint interiors.       \label{i0}
\end{enumerate}
\end{proposition}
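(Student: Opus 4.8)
The plan is to build the sets $S_i$ by first choosing, for each $i \ne i_0$, a single subinterval of length $c_1 N^{1-v}$ inside each $N^{-1}$-interval $I \in \mathcal I_N(J,i)$ \emph{freely}, and then to construct $S_{i_0}$ so as to dodge the zero set of $f$. The key point is a one-variable ``thickness'' estimate: because $\partial f/\partial x_{i_0}$ is bounded below by $c_0$ on $T_1 \times \cdots \times T_v$, the slice $\{t : f(x_1,\dots,x_{i_0-1},t,x_{i_0+1},\dots,x_v) = 0\}$ contains at most one point in each $M^{-1}$-interval $J$, and $f$ moves by at least $c_0 N^{-1}$ across an $N^{-1}$-interval $I \ni$ that point; away from the bad $N^{-1}$-interval containing the zero, $|f| \gtrsim c_0 N^{-1}$ on all of $I$. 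More precisely, since $|\nabla f| \le c_0^{-1}$, if we have already frozen the variables $x_i$ ($i \ne i_0$) to lie in sets $S_i$ of diameter $\le c_1 N^{1-v}$ inside their respective $N^{-1}$-cells, then as $(x_i)_{i \ne i_0}$ ranges over $\prod_{i \ne i_0} S_i$ the zero locus in the $x_{i_0}$ variable wobbles by at most $c_0^{-1}(v-1) c_1 N^{1-v}$, which for $c_1$ small is a tiny fraction of $N^{-1}$. Hence there is a fixed subinterval of length $\sim N^{-1}$ in ``most'' $N^{-1}$-cells of $T_{i_0}$ on which $f$ stays bounded away from $0$ uniformly over all choices of the other variables.

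The steps, in order: (1) Fix $i \ne i_0$ and $J \in \mathcal J_M(T_i)$; subdivide into $\mathcal I_N(J,i)$ and, inside each $I$, select $S_i \cap I$ to be the left (say) closed subinterval of length $c_1 N^{1-v}$. This gives part (\ref{inoti0}) by construction, for $c_1$ to be pinned down later. (2) For the index $i_0$: fix $J \in \mathcal J_M(T_{i_0})$ and subdivide into $\mathcal I_N(J,i_0)$, which has $N/M$ intervals. The ``memory'' set $Z_J := \{ t \in J : f(x) = 0 \text{ for some } x \text{ with } x_{i_0} = t,\ x_i \in S_i\ (i \ne i_0)\}$ is, by the implicit function theorem and the derivative bounds, contained in a set of diameter $\le c_0^{-1}(v-1)c_1 N^{1-v} + (\text{at most one IFT branch})$ — in fact it meets at most \emph{two} consecutive intervals of $\mathcal I_N(J,i_0)$ once $N$ is large. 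Discard those (at most two) intervals; this is wasteful by a factor $2M/N \le 1/M$ once $N \ge N_0 := 2M^2$, giving \eqref{I_N'I_N}. Call the survivors $\mathcal I_N'(J,i_0)$. (3) For $I \in \mathcal I_N'(J,i_0)$, we know $f$ has no zero on $I \times \prod_{i\ne i_0} S_i$; but to keep the Cartesian-product structure uniform across scales we further cut $S_{i_0}\cap I$ down to a union of $N^{-1}$-cells-worth of subintervals of length $c_1 N^{1-v}$ — concretely, subdivide $I$ into $N^{v-1}$ intervals of length $N^{1-v}\cdot N^{-1}$... wait, length $N^{-v}$; we instead keep every $c_1 N^{1-v}$-subinterval, i.e. $S_{i_0}\cap I$ is a union of $\sim c_1^{-1} N^{v-2}$ intervals of length $c_1 N^{1-v}$ with disjoint interiors, and $|S_{i_0}\cap I| \ge c_1/N$ after possibly dropping a bounded-proportion remainder. (4) Verify (\ref{no solutions}): a point $x \in S_1 \times \cdots \times S_v$ has $x_{i_0}$ in some surviving $I$, and then $f(x) \ne 0$ by the choice of $\mathcal I_N'(J,i_0)$ in step (2). (5) Choose $c_1$ at the end, small enough (depending only on $v, c_0$) that the wobble in step (2) is $< \tfrac12 N^{-1}$ and the ``at most two discarded intervals'' claim holds, and choose $N_0$ depending on $M, c_0$ (and the modulus of continuity of $\nabla f$) accordingly.

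The main obstacle is step (2): controlling the zero set of $f$ \emph{simultaneously} over all admissible values of the $v-1$ frozen variables, and showing it is confined to $O(1)$ intervals of the $N^{-1}$-grid rather than merely to a short arc whose location depends on those variables. This is where the lower bound $|\partial f/\partial x_{i_0}| \ge c_0$ and the upper bound $|\nabla f| \le c_0^{-1}$ both get used — the former to guarantee a unique, transversally-crossed zero branch in $t$ for each fixed choice of the other coordinates (so $f$ changes sign exactly once, with slope $\ge c_0$), the latter to bound how far that branch migrates as the other coordinates vary within their tiny $c_1 N^{1-v}$-cells. A secondary technical point is that $\partial f/\partial x_{i_0}$ is only continuous, not Lipschitz, so the grid scale $N^{-1}$ must be taken fine enough (this is the role of $N_0$) that $\partial f/\partial x_{i_0}$ does not change much across a single $N^{-1}$-interval; this is exactly where $C^1$ regularity and compactness of $T_1\times\cdots\times T_v$ enter. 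Everything else — the bookkeeping for \eqref{I_N'I_N} and \eqref{S_i0}, the decomposition into $c_1 N^{1-v}$-subintervals — is routine once step (2) is in hand.
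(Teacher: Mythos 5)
Your construction of $S_i$ for $i \ne i_0$ (one leftmost $c_1 N^{1-v}$-subinterval in each $N^{-1}$-cell) matches the paper's, and part (\ref{inoti0}) is fine. The genuine gap is in step (2), and it is exactly the crux of the whole proposition. You claim that the bad set
\[Z_J = \bigl\{t \in J : f(x) = 0 \text{ for some } x \text{ with } x_{i_0}=t,\ x_i \in S_i\ (i \ne i_0)\bigr\}\]
``is contained in a set of diameter $\lesssim c_0^{-1}(v-1)c_1 N^{1-v}$'' and hence ``meets at most two consecutive intervals of $\mathcal I_N(J,i_0)$.'' This is false: the wobble bound $c_0^{-1}(v-1)c_1 N^{1-v}$ controls the motion of the implicitly-defined zero branch only as $(x_i)_{i\ne i_0}$ ranges over a \emph{single} product of $c_1 N^{1-v}$-cells. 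But $\prod_{i\ne i_0} S_i$ is a union of roughly $N^{v-1}$ such products, scattered across $[0,1]^{v-1}$, and as you change which product you are in, the implicit zero branch can jump to an entirely different location in $J$. So $Z_J$ can consist of up to $\sim M N^{v-1}$ tiny arcs spread across the whole of $J$, and no fixed $O(1)$ collection of $N^{-1}$-cells can absorb them.

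This is precisely what the paper's argument is designed to handle: it introduces the finite set $\mathbb A_N$ of left endpoints (cardinality $\leq N^{v-1}$), bounds the set $\mathbb B$ of corresponding zeros by $\#\mathbb B \leq M N^{v-1}$ using Rolle's theorem, and then pigeonholes to find a $(1-\tfrac1M)$-fraction of $N^{-1}$-cells $I$ in which $\#(\mathbb B \cap I) \leq M^3 N^{v-2}$. Only then does it decompose such an $I$ into $\sim N^{v-2}/(C_0c_1)$ subintervals of length $\sim C_0 c_1 N^{1-v}$ and discard those near $\mathbb B$; the total discarded length per good $I$ is $\sim M^3 C_0 c_1/N$, which is a small fraction of $1/N$ when $c_1$ is small — that is what gives \eqref{S_i0}. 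Your steps (3)--(5) are bookkeeping that cannot be salvaged without this counting-and-pigeonhole step, since the set you must dodge inside each surviving $I$ still has $\sim M^3 N^{v-2}$ pieces, not zero. In short, you have the right local (IFT/Rolle) ingredients but are missing the global combinatorial ingredient — the cardinality bound on $\mathbb B$ and the two-scale pigeonhole — that makes the avoidance possible while retaining positive measure.
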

\begin{proof}
Without loss of generality, we may set $i_0 = v$. For $i \ne v$, we define 
\[ S_i = \bigcup \left\{  [a_i, a_i + c_1 N^{1-v}]\, : \, [a_i, b_i] = I \in \mathcal I_N(J, i) \text{ for some } J \in \mathcal J_M(T_i) \right\}, \]
where the small positive constant $c_1$ and the integer $N$ will be specified shortly. In other words, $S_i$ consists of the leftmost $c_1N^{1-v}$-subintervals of all the $1/N$-intervals that constitute $T_i$. It is clear that the conclusion (\ref{inoti0}) holds for this choice of $S_i$. 
\vskip0.1in

\noindent We now proceed to  define the subcollection $\mathcal I_N'(J,v)$ and the set $S_v$ that obey the requirements in (\ref{i0}).  Consider the collection \[ \mathbb A_N := \prod_{i=1}^{v-1} \bigl\{a_i : [a_i, b_i] = I \in \mathcal I_N(J, i) \text{ for some } J \in \mathcal J_M(T_i) \bigr\}  \] 
consisting of $(v-1)$-tuples of the form $a' = (a_1, \cdots, a_{v-1})$, where each $a_i$ is a left endpoint of an interval in $\mathcal I_N(J, i)$, for some $J \in \mathcal J_{M}(T_i)$. For each $i$, the number of possible choices for $1/N$-intervals $I \subseteq [0,1]$ and hence for $a_i$ is at most $N$. Thus 
\begin{equation} \label{A_n-card} 
\#(\mathbb A_N) \leq N^{v-1}.  
\end{equation} 
We will prove in Lemma \ref{cardinality for fixed a' lemma} below that for every fixed $a' \in \mathbb A_N$, 
\begin{equation} \label{v-proj}
\# \left\{x_v : f(a', x_v) = 0 \right\} \leq M.  
\end{equation} 
Assuming this for the moment, define \[\mathbb B := \{x_v : \exists \; a' \in \mathbb A_N \text{ such that } f(a', x_v)= 0  \}.\] In light of \eqref{A_n-card} and \eqref{v-proj}, we find that 
\begin{equation} \#(\mathbb B) \leq MN^{v-1}. \label{cardB} 
\end{equation}  
\vskip0.1in

\noindent The subcollection $\mathcal I'_{N}(J, v) \subseteq \mathcal I_{N}(J, v)$ specified in part (\ref{i0}) is chosen as follows: we declare \[ I \in \mathcal I_N'(J,v) \quad \text{ if } \quad \#(\mathbb B \cap I) \leq M^3 N^{v-2}. \] In view of \eqref{cardB} and the pigeonhole principle, it follows that 
\begin{equation} 
\# \bigl(\mathcal I_N(J,v) \setminus \mathcal I_N'(J,v) \bigr) \leq \frac{MN^{v-1}}{M^3 N^{v-2}} = \frac{N}{M^2}.  
\end{equation}  
The fact that $\#(\mathcal I_N(J,v)) = \frac{N}{M}$ then implies \eqref{I_N'I_N}.  
\vskip0.1in
 
\noindent We now decompose each $I \in \mathcal I_N'(J,v)$ into consecutive subintervals of length $N^{v-1}/(C_0c_1)$ with disjoint interiors, and denote the successive intervals by $\tilde{I}_\ell(I)$:   \[ I = \bigcup \{ \tilde{I}_\ell(I) : 1 \leq \ell \leq N^{v-2}/(C_0c_1)\}. \] Here $C_0$ is a constant integer depending on $f$, $M$ and $T_1, \cdots T_v$, as has been specified in Lemma \ref{implicit function lemma} below. The integer $N$ is chosen large enough so that $N^{v-2}/(C_0c_1)$ is an integer. All intervals $\tilde{I}_\ell(I)$ that intersect $\mathbb B$, together with their adjacent neighbours, are then discarded. The remaining subset of $T_v$ is defined to be $S_v$. More specifically, 
\[ S_v = \bigcup \left\{\tilde{I}_\ell(I) : \begin{aligned} &\tilde{I}_k(I) \cap \mathbb B = \emptyset \text{ for }  |k-\ell| \leq 1, \\ &I \in \mathcal I_N'(J, v), \;  J \in \mathcal J_M(T_v) \end{aligned} \right\}.  \]
Clearly $S_v$ can be viewed a union of intervals of length $c_1/N^{v-1}$. The definition of $\mathcal I_N'(J,v)$ implies that the total length of the discarded subintervals in each $I \in\mathcal I_N'(J,v)$ is at most $3C_0c_1M^3N^{v-2}/N^{v-1} = 3M^3C_0c_1/N$. The claim \eqref{S_i0} now follows by choosing $c_1 > 0$ small enough so as to satisfy $3M^3 C_0 c_1 < (1-c_1)$.     
\vskip0.1in

\noindent Finally, Lemma \ref{implicit function lemma} below shows that given $x' = (x_1, \cdots, x_{v-1}) \in S_1 \times S_2 \times \cdots \times S_{v-1}$, any $x_v$ obeying $f(x',x_v) = 0$ should necessarily lie within a $C_0c_1/N^{v-1}$ neighbourhood of $\mathbb B$. Since the set $S_v \subseteq T_v$ was created so as to avoid these neighbourhoods, conclusion (\ref{no solutions}) follows.   
\end{proof} 

\begin{lemma} \label{cardinality for fixed a' lemma}  
For $f$ and $\mathbb A_N$ as in Proposition \ref{non-simul-mainprop}, the inequality \eqref{v-proj} holds for every fixed $a' \in \mathbb A_N$. 
\end{lemma}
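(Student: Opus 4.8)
The plan is to show that for a fixed tuple $a' = (a_1, \ldots, a_{v-1})$ of left endpoints, the one-variable slice function $g(x_v) := f(a', x_v)$ has at most $M$ zeros in $[0,1]$. The heuristic is that $g$ has nonvanishing derivative (by \eqref{deriv-nonzero}, since $i_0 = v$), so $g$ is strictly monotone and has \emph{at most one} zero in all of $[0,1]$ — which is far better than the claimed bound of $M$. So the real content must be that the bound $M$ is stated this way for uniformity with the vector-valued analogue, or because the actual claim concerns counting zeros that survive into the relevant intervals; in any case I would prove the stronger monotonicity statement and note it implies \eqref{v-proj}.

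Concretely, first I would fix $a' \in \mathbb{A}_N$ and restrict attention to $x_v$ ranging over $T_v = \bigcup \mathcal{J}_M(T_v)$, a union of at most $M$ intervals of length $M^{-1}$ (there are at most $M$ disjoint such intervals inside $[0,1]$). On each such interval $J \in \mathcal{J}_M(T_v)$, the function $x_v \mapsto g(x_v)$ is $C^1$ with $|g'(x_v)| = |\partial f/\partial x_{v}(a', x_v)| \geq c_0 > 0$ throughout, by hypothesis \eqref{deriv-nonzero} applied to the point $(a', x_v) \in T_1 \times \cdots \times T_v$ (note each $a_i \in T_i$ since it is a left endpoint of an interval of $\mathcal{I}_N(J_i, i)$, which lies inside some $J_i \in \mathcal{J}_M(T_i) \subseteq T_i$). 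Since $g'$ is continuous and never zero on the interval $J$, it has constant sign there, so $g$ is strictly monotone on $J$ and therefore vanishes at most once on $J$. Summing over the at most $M$ intervals $J \in \mathcal{J}_M(T_v)$ gives at most $M$ zeros of $g$ in $T_v$, hence \eqref{v-proj}.

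The one point requiring a little care is the range of $x_v$: the set $\{x_v : f(a', x_v) = 0\}$ in \eqref{v-proj} should be understood with $x_v$ restricted to $T_v$ (or to $[0,1]$, in which case one uses that $|\mathcal{J}_M(T_v)| \le M$ and the same monotonicity argument on each component interval, together with the fact that $f$ is defined and $C^1$ on a neighbourhood of $[0,1]^v$). I would state this restriction explicitly at the start of the proof. No genuine obstacle arises; the only subtlety is bookkeeping — confirming that $(a', x_v)$ indeed lies in $T_1 \times \cdots \times T_v$ so that \eqref{deriv-nonzero} is applicable — and then the monotonicity argument is immediate.

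I do not expect any hard part here: the lemma is a soft consequence of the derivative lower bound in \eqref{deriv-nonzero}. If anything, the mild care needed is to make sure the count is against the right ambient set ($T_v$, which meets $[0,1]$ in at most $M$ intervals of length $M^{-1}$), since that is precisely where the factor $M$ in the bound \eqref{v-proj} comes from rather than from the behaviour of $f$ itself.
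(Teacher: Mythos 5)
Your proof is correct and takes essentially the same approach as the paper: decompose $T_v$ into its at most $M$ component intervals of length $M^{-1}$ and show that each contains at most one zero of $x_v \mapsto f(a', x_v)$ via the derivative bound in \eqref{deriv-nonzero} (the paper phrases this via Rolle's theorem by contradiction, you via strict monotonicity — the same thing). One small caution about your opening heuristic: it is not true that $g$ is monotone on all of $[0,1]$ so that the bound $M$ is ``far better than'' necessary, since \eqref{deriv-nonzero} only guarantees $|\partial f/\partial x_v| \geq c_0$ on $T_1 \times \cdots \times T_v$, not on $[0,1]^v$; the factor $M$ is exactly the number of component intervals of $T_v$, which you do correctly identify later in the argument.
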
 
\begin{proof} 
Given $a' \in \mathbb A_N$, we claim that for every $J \in \mathcal J_M(T_v)$, there exists at most one $x_v \in J$ such that $f(a', x_v) = 0$. Since the number of possible $J \in \mathcal J_M(T_v)$ is at most $M$, the desired conclusion would follow once the claim is established. 
\vskip0.1in

\noindent To prove the claim, let us assume if possible that there exist $x_v, y_v \in J$,  $x_v \ne y_v$ such that $f(a',x_v) = f(a', y_v) = 0$. By Rolle's theorem, this ensures the existence of some point $z_v \in J$ where $\partial f/\partial x_v (a', x_v) = 0$. But this contradicts the hypothesis  (\ref{deriv-nonzero}) that the partial derivative $\partial f/\partial x_v$ is nonzero on $T_1 \times \cdots \times T_v$. 
\end{proof} 

\begin{lemma} \label{implicit function lemma} 
Let $f$, $M$ and $T_1, \cdots, T_v$ be as in Proposition \ref{non-simul-mainprop}. Then there exists a constant $C_0$ depending on these quantities, and in particular on $c_0$ such that for the choice of $S_1, S_2, \cdots S_{v-1}$ as specified in the proof of the proposition,  
\[ \text{dist}(x_v, \mathbb B) \leq \frac{C_0c_1}{N^{v-1}} \] for any $x_v$ obeying $f(x) = 0$, with $x' = (x_1, \cdots, x_{v-1}) \in S_1 \times \cdots \times S_{v-1}$. 
\end{lemma}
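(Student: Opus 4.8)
The plan is to show that, in the region cut out by the bounds \eqref{deriv-nonzero}, the unique zero of $f$ in the last variable is a Lipschitz function of the first $v-1$ variables, with Lipschitz constant controlled purely by $c_0$, and then to compare the zero produced by a point $x' \in S_1 \times \cdots \times S_{v-1}$ with the zero produced by the nearby grid point $a' \in \mathbb{A}_N$ sitting just to its left.

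First I would attach to each admissible $x' = (x_1, \dots, x_{v-1})$ its canonical companion $a' = (a_1, \dots, a_{v-1}) \in \mathbb{A}_N$: let $a_i$ be the left endpoint of the interval $I \in \mathcal{I}_N(J,i)$ containing $x_i$. By the construction of $S_i$ in the proof of Proposition \ref{non-simul-mainprop}, one has $S_i \cap I = [a_i, a_i + c_1 N^{1-v}]$, so $x_i \in [a_i, a_i + c_1 N^{1-v}] \subseteq I \subseteq T_i$; hence $|x_i - a_i| \leq c_1 N^{1-v}$ for every $i \leq v-1$, the entire coordinate box joining $x'$ to $a'$ lies in $T_1 \times \cdots \times T_{v-1}$, and $a'$ is indeed a legitimate element of $\mathbb{A}_N$. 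It suffices to treat $x_v \in T_v$, since only those zeros are relevant to the construction of $S_v$; for such $x_v$ the point $(x', x_v)$ and the whole segment from $(x', x_v)$ to $(a', x_v)$ lie in $T_1 \times \cdots \times T_v$, so \eqref{deriv-nonzero} is available throughout. A mean value estimate in the first $v-1$ variables then gives
\[ |f(a', x_v)| = |f(a', x_v) - f(x', x_v)| \leq c_0^{-1}\sqrt{v-1}\, c_1 N^{1-v} =: \delta. \]
On the other hand, since $|\partial f/\partial x_v| \geq c_0$ on all of $T_1 \times \cdots \times T_v$, the map $t \mapsto f(a', t)$ is strictly monotone on the $M^{-1}$-interval $J \in \mathcal J_M(T_v)$ containing $x_v$, with derivative of absolute value $\geq c_0$; consequently it changes sign across $x_v$ and has a unique zero $x_v^* \in J$, for which $c_0 |x_v - x_v^*| \leq |f(a', x_v) - f(a', x_v^*)| = |f(a', x_v)| \leq \delta$. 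Thus $|x_v - x_v^*| \leq \delta/c_0 = c_0^{-2}\sqrt{v-1}\, c_1 N^{1-v}$, and since $f(a', x_v^*) = 0$ with $a' \in \mathbb{A}_N$ we get $x_v^* \in \mathbb{B}$, hence $\text{dist}(x_v, \mathbb{B}) \leq c_0^{-2}\sqrt{v-1}\, c_1 N^{1-v}$. Taking $C_0$ to be any integer with $C_0 \geq c_0^{-2}\sqrt{v-1}$ finishes it. (Equivalently, one can phrase the same computation through the implicit function theorem: on the set $\{|\partial f/\partial x_v| \geq c_0\}$ the zero set of $f$ is a graph $x_v = \phi(x')$ with $|\nabla\phi| \leq c_0^{-2}$, and $x_v^* = \phi(a')$.)

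The one point requiring care — and the main obstacle — is the claim that the comparison zero $x_v^*$ actually exists and stays inside $J$, where the derivative lower bound holds. If $x_v$ sat within $\delta/c_0$ of the "far" endpoint of $J$, monotonicity would push $x_v^*$ toward the near side, so one needs $\delta/c_0 < M^{-1}$ (which holds once $c_1$ is small and $N$ large, both already at our disposal in Proposition \ref{non-simul-mainprop}), together with the sign-change observation that $f(a',\cdot)$ crosses $0$ within distance $\delta/c_0$ of $x_v$ on the appropriate side. Beyond this boundary bookkeeping and the tracking of the dimensional constant, the argument is routine.
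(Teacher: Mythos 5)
Your argument is correct and is essentially the same as the paper's: both rest on the Lipschitz dependence (with constant $\sim c_0^{-2}$, coming from the upper bound on $|\nabla f|$ and the lower bound on $|\partial f/\partial x_v|$) of the implicitly defined zero in the $x_v$-variable on the remaining coordinates, together with the observation that $x'$ lies within $O(c_1 N^{1-v})$ of its companion grid point $a' \in \mathbb A_N$. The paper packages this as a bound $|\nabla g_{\mathbb J}|\lesssim C_0$ on the implicit-function graph $x_v = g_{\mathbb J}(x')$ over each $M^{-1}$-cube and compares $g_{\mathbb J}(x')$ to $g_{\mathbb J}(a')\in\mathbb B$, whereas you run the equivalent mean-value-plus-monotonicity computation directly on $f(a',\cdot)$; the boundary concern you flag (the comparison zero leaving $J$) is present in both versions and is absorbed, as you say, by letting $C_0$ depend on $M, T_1,\dots,T_v$ as well as $c_0$.
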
 
\begin{proof} 
Let $\mathbb J = J_1 \times \cdots \times J_v = \mathbb J' \times J_v \in \prod_{i=1}^v \mathcal J_M(T_i)$ be a $v$-dimensional cube of sidelength $1/M$ such that the zero set of $f$ intersects $\mathbb J$. The nonvanishing derivative condition (\ref{deriv-nonzero}) then implies, in view of the implicit function theorem, that there exists a $(v-1)$-variate $C^1$ function $g_{\mathbb  J}$ defined on $\mathbb J'$ and a constant $C_0 > 0$ depending on $c_0, M, T_1, \cdots, T_v$ such that 
\begin{align}
&f(x) = 0, \; x \in \mathbb J \quad \text{ implies } \quad x_v = g_{\mathbb J}(x'), \; x' \in \mathbb J', \quad \text{ and } \label{def-g_J}\\ 
& |\nabla g_{\mathbb J}| \leq \frac{C_0}{\sqrt{v}} \text{ on } \mathbb J'. \label{gradient-bound}  
\end{align}  
\vskip0.1in

\noindent Given $x = (x', x_v) \in S_1 \times \cdots \times S_v$ such that $f(x) = 0$, let $\mathbb J_x$ denote the $v$-dimensional $1/M$-cube $\mathbb J$ in which $x$ lies, and let $\mathbb I'_x = I_1 \times \cdots \times I_{v-1} = \prod_{i=1}^{v-1} [a_i, b_i] \in \prod_{i=1}^{v-1} \mathcal I_N(J_i, i)$ be the $(v-1)$-dimensional subcube of $\mathbb J_x'$ of sidelength $1/N$ containing $x'$. Then \[ x_v = g_{\mathbb J}(x'), \quad a' = (a_1, \cdots, a_{v-1}) \in \mathbb A_N, \quad g_{\mathbb J}(a') \in \mathbb B, \quad \text{ and } \quad |x' - a'| \leq \frac{c_1\sqrt{v}}{N^{v-1}}. \] Further, \eqref{gradient-bound} implies \begin{align*} \text{dist}(x_v, \mathbb B) \leq  |g_{\mathbb J}(a') - g_{\mathbb J}(x')| &\leq || \nabla g_{\mathbb J}||_{\infty} |x'-a'|  \\ &\leq (C_0/\sqrt{v}) \times (c_1 \sqrt{v}/N^{v-1}) = \frac{C_0c_1}{N^{v-1}}, \end{align*}  which is the conclusion of the lemma.       
\end{proof} 

\subsection{Building block in higher dimensions} 
Given positive integers $m,n \geq 1$ and $v \geq 3$ with $m \leq n(v-1)$, let $f : \mathbb{R}^{nv} \to \mathbb{R}^m$ be a $C^2$ function, whose derivative is of full rank, and whose zero set has nontrivial intersection with $[0,1]^{nv}$. Suppose that $M \geq M_0$ is a large integer, $c_0 > 0$ is a small constant and $T_1, \ldots, T_v \subseteq [0,1]^n$ are sets with the following properties: 
\begin{align} 
& \begin{aligned} \bullet  & {\text{ Each $T_i$ is expressible as a union of axis-parallel cubes of sidelength}} \\ 
                                         & {\text{ $M^{-1}$, the collection of which will be called $\mathcal J_M(T_i)$.}} 
\end{aligned}  \label{T-def-vector} \\
&  \begin{aligned} \bullet &{\text{ On $([0,1]^{n})^v$, the smallest singular value of the derivative $Df$ is }} \\ &{\text{ bounded above and below by $c_0^{-1}$ and $c_0$ respectively. }} \end{aligned}  \label{D-singular-value}\\
& \begin{aligned}  \bullet &{\text{ On $([0,1]^n)^v$, the matrix norm of the Hessian $D^2f$ is bounded}} \\ &{\text{ above by $c_0^{-1}$. }} \end{aligned} \label{D-Hessian-bound}
\end{align} 

\begin{proposition}\label{non-simul-mainprop vector-valued}
Given $f, M$ and $c_0$ as above, there exists a constant $c_1 > 0$ and an integer $N_0$ depending on these quantities such that for all $N \geq N_0$ that are integer multiples of $N_0$ the following conclusions hold. There are compact subsets $S_i \subseteq T_i$ for all $1 \leq i \leq v$ such that 
\vskip0.1in
\begin{enumerate}[(a)]
\item \label{f-nonzero} There are no solutions to $f(x) = 0$ with $x \in S_1 \times \cdots \times S_v$. \label{no solutions vector-valued}
\vskip0.1in 
\item \label{S_i-description} For each $1 \leq i \leq v$ and $J \in \mathcal J_M(T_i)$, let us decompose $J$ into closed axis-parallel cubes of length $N^{-1}$ with disjoint interiors and call the resulting collection of cubes $\mathcal I_N(J,i)$. There exists $\mathcal I_N'(J, i) \subseteq \mathcal I_N(J,i)$ such that 
\[ S_i \subseteq \bigcup \left\{I : J \in \mathcal J_M(T_i), \; I \in \mathcal I_N'(J,i) \right\}.  \]
More precisely, for each $I \in \mathcal I_N'(J,i)$, the set $S_i \cap I$ is a single axis-parallel cube of sidelength $\ell = c_1 N^{n(1-v)/m}$, provided $i \ne v$. For $i = v$ and $I \in \mathcal I_N'(J, v)$, the set $S_v \cap I$ is not necessarily a single cube of sidelength $\ell$, but a union of such cubes, with the property that    
\begin{equation}\label{vsize}
|S_{v} \cap I| \geq \left(1 - \frac{1}{M} \right) \frac{1}{N^n}. 
\end{equation} 
\vskip0.1in 
\item The subcollections $\mathcal I_N'(J,i)$ of cubes are large subsets of the ambient collection $\mathcal I_N(J, i)$, in the sense that for all $1 \leq i \leq v$, $J \in \mathcal J_M(T_i)$, 
\begin{equation} \label{I_N'I_N vector-valued} 
\# \bigl( \mathcal I_{N}'(J, i) \bigr) \geq \left (1 - \frac{1}{M} \right) \# \bigl( \mathcal I_{N}(J, i) \bigr). 
\end{equation} 
%
\end{enumerate}
\end{proposition}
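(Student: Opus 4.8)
\textbf{Proof plan for Proposition \ref{non-simul-mainprop vector-valued}.}

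The plan is to mimic the one-dimensional construction of Proposition \ref{non-simul-mainprop}, replacing the scalar implicit-function argument with its vector-valued analogue and Rolle's theorem with a counting argument based on the quantitative nondegeneracy encoded in \eqref{D-singular-value}. First I would set up the combinatorial skeleton: for each $1\le i\le v$ and each $J\in\mathcal J_M(T_i)$, subdivide $J$ into the $1/N$-cubes $\mathcal I_N(J,i)$. For $i\neq v$, define $S_i$ to be the union, over all such cubes $I$, of one fixed sub-cube of sidelength $\ell=c_1N^{n(1-v)/m}$ sitting in a corner of $I$; this immediately gives the description in part (\ref{S_i-description}) for $i\ne v$ and trivially part (\ref{I_N'I_N vector-valued}) with $\mathcal I_N'(J,i)=\mathcal I_N(J,i)$ for $i\ne v$. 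The real work is to produce $S_v$ and the subcollection $\mathcal I_N'(J,v)$ so that $f$ has no zero on $S_1\times\cdots\times S_v$ while retaining most of the measure of $T_v$, as in \eqref{vsize}.

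Next I would analyze the ``bad set'' in the last block of variables. Let $\mathbb A_N=\prod_{i=1}^{v-1}\{\text{corners of the }\ell\text{-cubes comprising }S_i\}$, so $\#(\mathbb A_N)\le N^{n(v-1)}$. For fixed $a'=(a_1,\ldots,a_{v-1})$, consider $x_v\mapsto f(a',x_v)$, a $C^2$ map $\mathbb R^n\to\mathbb R^m$ with $m\le n$. The singular-value lower bound \eqref{D-singular-value} forces the partial derivative $D_{x_v}f$ — an $m\times n$ block of $Df$ — to have its smallest singular value bounded below on the relevant region (here one uses that a full-rank $m\times nv$ matrix with smallest singular value $\ge c_0$ need not have a full-rank $x_v$-block, which is the first place I expect trouble; the fix is to allow the distinguished block of variables to depend on the cube $\mathbb J\in\prod\mathcal J_M(T_i)$ and to absorb this into the enlargement of constants, exactly as $i_0$ was fixed but the implicit function varied over cubes in Lemma \ref{implicit function lemma}). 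Granting a uniform lower bound on the smallest singular value of the $x_v$-derivative on each $1/M$-cube, the implicit function theorem expresses the zero set of $f$ inside such a cube as a graph $x_v=g_{\mathbb J}(x',\text{extra coords})$ over an $(n-m)$-dimensional-plus-$x'$ domain, with $\nabla g_{\mathbb J}$ controlled by a constant $C_0=C_0(c_0,M)$, using the Hessian bound \eqref{D-Hessian-bound} to make the $C^1$ estimates quantitative and uniform. The upshot is that for each $a'$ the zero set of $f(a',\cdot)$ lies in an $O(N^{n(1-v)}C_0)$-neighbourhood, in $[0,1]^n$, of a set $\mathbb B_{a'}$ that is itself a graph of Hausdorff $(n-m)$-measure $O(1)$; collecting over $a'\in\mathbb A_N$ gives a set $\mathbb B\subseteq[0,1]^n$ whose $N^{-1}$-covering number is at most $CMN^{n(v-1)}\cdot N^{(n-m)(\,\cdot\,)}$ — more precisely one counts that $\mathbb B$ meets at most $CMN^{n(v-1)-m}$ of the $1/N$-cubes in $[0,1]^n$, since each graph piece over an $\ell$-cube of $x'$-values meets $O(N^{n-m}\ell^{\,?})$ cubes; the bookkeeping here is the bulk of the routine computation.

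Finally I would extract $\mathcal I_N'(J,v)$ and $S_v$ by the pigeonhole-and-discard procedure of Proposition \ref{non-simul-mainprop}. Declare $I\in\mathcal I_N'(J,v)$ if $I$ contains at most $M\cdot(\text{average})$ many $1/N$-subcubes of $\mathbb B$; since $\#(\mathcal I_N(J,v))=(N/M)^n$ and $\mathbb B$ meets at most $CMN^{n(v-1)-m}$ cubes, Markov's inequality gives \eqref{I_N'I_N vector-valued} for $i=v$ after choosing $M_0$ large. Inside each good $I$, subdivide into $\ell$-cubes, throw away every $\ell$-cube within distance $C_0\ell$ of $\mathbb B$ together with its neighbours, and let $S_v\cap I$ be the union of the survivors; the measure of the discarded part is at most a constant multiple of $C_0\ell^{\,?}\cdot(\text{number of }1/N\text{-cubes of }\mathbb B\text{ in }I)$, which the definition of $\mathcal I_N'(J,v)$ bounds by $(1/M)N^{-n}$ once $c_1$ is small, yielding \eqref{vsize}. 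Conclusion (\ref{f-nonzero}) then follows exactly as before: for $x'\in S_1\times\cdots\times S_{v-1}$ the graph description and the gradient bound on $g_{\mathbb J}$ put any $x_v$ with $f(x',x_v)=0$ within $C_0\ell$ of $\mathbb B$, and such $x_v$ have been excised from $S_v$. The main obstacle, as flagged, is the possible degeneracy of the $x_v$-block of $Df$; the robust way around it is to choose, for each $1/M$-cube $\mathbb J$, a coordinate block of size $m$ among the $nv$ variables on which $Df$ is uniformly invertible, argue the graphing and excision relative to that block, and note that since there are only finitely many ($\le\binom{nv}{m}$) choices one can take $C_0$, $N_0$ uniform — at the cost of constants depending on $M$ and $c_0$, which is all that is claimed.
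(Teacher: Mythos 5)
Your proposal is structurally different from the paper's proof, and the difference matters. You transplant the one-dimensional implicit-function-theorem argument of Proposition \ref{non-simul-mainprop} directly: fix $S_1,\ldots,S_{v-1}$ up front as corner $\ell$-cubes, then for each $a'\in\mathbb A_N$ express $\{x_v : f(a',x_v)=0\}$ as a graph and excise its $O(\ell)$-neighbourhood from $T_v$. The paper instead covers the entire zero set $Z_f\subset[0,1]^{nv}$ by $O(\ell^{m-nv})$ cubes of side $\ell$ (using only that $Df$ itself has full rank, via the co-area formula), and then peels off the coordinates $x_1,x_2,\ldots$ one at a time through the purely combinatorial Lemma \ref{proj-lemma}: inside each $1/N$-cube of $T_i$, pigeonhole to find a ``good'' wafer carrying few bad $\ell$-cubes, keep it as $S_i\cap I$, and project the surviving bad cubes onto the remaining block of coordinates. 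After $v-1$ passes, Lemma \ref{proj-lemma-v} removes what is left from $T_v$. No coordinate block is ever inverted; the implicit function theorem is never invoked.

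That distinction is exactly what closes the gap you flag, and your proposed workaround does not. Full rank of the $m\times nv$ matrix $Df$ does not imply that any single $m\times n$ block $D_{x_i}f$ has rank $m$: with $n=m=2$, $v=3$, take $f(x_1,x_2,x_3)=(x_{1,1}-x_{2,1},\,x_{1,1}+x_{3,1})$; every block $D_{x_i}f$ has rank $1$ while $Df$ has rank $2$, so there is no $i_0$ for which your graphing step applies. Your fix --- choose per $1/M$-cube an $m$-element subset of the $nv$ scalar coordinates on which $Df$ is invertible, and ``argue the graphing and excision relative to that block'' --- does not fit the structure of the proposition: those $m$ coordinates need not lie inside the $x_v$-block, and once $S_1,\ldots,S_{v-1}$ have been frozen as corner cubes there is no freedom left to excise in the $x_1,\ldots,x_{v-1}$ directions, nor would a $\mathbb J$-dependent choice of distinguished variable yield the uniform description of the $S_i$ demanded by part (\ref{S_i-description}). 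The secondary bookkeeping you flag (the $N^{n(v-1)-m}$ count, the unresolved $\ell$-exponents) would be repairable if the solvability issue were not there, but as stated the argument breaks at that step. Making the choice of excised $\ell$-cubes in $S_1,\ldots,S_{v-1}$ adaptive rather than a priori is precisely what the paper's iterated projection accomplishes.
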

\vskip0.1in 
{\em{Remarks: }} \begin{enumerate}[(a)]
\item The proof will show that the constant $c_1$ in Proposition \ref{non-simul-mainprop vector-valued} may be chosen as a small constant multiple of $M^{-R}$, where $R = [(n+1)v + 1]/m$. For the purposes of application, $M$ is negligible compared to $N$, and hence the specific power of $M$ that appears in the expression for $\ell$ is not critical to the proof. The power of $N$, which is $-\frac{n}{m}(v-1)$, is of utmost importance and the principal reason that the Hausdorff dimension of the set $E \subseteq \mathbb R^n$ in Theorem \ref{non-simul-thm vector-valued} is equal to $\frac{m}{v-1}$. 
\vskip0.1in
\item The restriction $m \leq n(v-1)$ justifies on one hand the dimensional constraint on the set $E$ which lies in $\mathbb R^n$. On a technical note, it is also necessary for the assumption $\ell \ll N^{-1}$ that permeates the proof. If $m < n(v-1)$, the chosen value of $\ell = \epsilon_0 M^{-R} N^{-n(v-1)/m}$ will be less than $\frac{1}{N}$ if $N$ is sufficiently large. If $m = n(v-1)$, the chosen value of $\ell$ will be less than $\frac{1}{N}$ provided that $M$ is sufficiently large.  
\vskip0.1in 
\item The special treatment of the variable $x_v$ in the proposition is for convenience only. The result holds for $x_v$ replaced by $x_{i_0}$, for any $1 \leq i_0 \leq v$.
\end{enumerate} 

\begin{proof}
Let $Z_f = \{x = (x_1, \cdots, x_v) \in ([0,1]^n)^v : f(x) = 0 \}$ be the zero set of the function $f$, which we wish to avoid. The assumptions \eqref{D-singular-value} and \eqref{D-Hessian-bound} ensure that $Z_f$ is an $(nv-m)$-dimensional submanifold of $[0,1]^{nv}$. Further, the co-area formula gives that $Z_f$ is coverable by at most $C \epsilon^{m-nv}$ many cubes of sidength $\epsilon$, for all sufficiently small $\epsilon$. Here $C$ is a large constant depending only on $c_0$ and independent of $\epsilon$. The proof consists of projecting $Z_f$ successively onto the coordinates $x_1, x_2, \cdots$, and selecting the sets $S_i$ so as to avoid the projected zero sets. The main ingredient of this argument has been described in Lemma \ref{proj-lemma}. We ask the reader to view the statement of the lemma first.  Assuming the lemma, the remainder of the proof proceeds as follows.   
\vskip0.1in 
\noindent Fix a parameter $\ell \ll 1/N$ soon to be specified. Recalling that $\mathcal I_{\alpha^{-1}}(J, i)$ denotes the collection of axis-parallel subcubes of sidelength $\alpha$ that constitute a partition of $J \in \mathcal J_M(T_i)$, let us define the collection of ``bad boxes'' $\mathbb{B}_1$ as follows: 
\begin{equation}
\mathbb{B}_1 = \{Q \in \prod_{i=1}^v \mathcal{I}_{\ell^{-1}}(J_i,i) : Q \cap Z_f \neq \emptyset \text{ for some $J_i \in J_M(T_i)$} \}.
\end{equation}
In other words, a box of sidelength $\ell$ in $T_1 \times \cdots \times T_v$ is considered bad if it intersects a point in the zero set of the function $f$. The discussion in the preceding paragraph shows that 
\begin{equation}  \label{B0-size} \#(\mathbb B_1) \leq C \ell^{m - nv} \end{equation}   where $C$ is a constant that depends only on the function $f$ and the value $c_0$. 
\vskip0.1in
\noindent The construction of $S_1, \cdots, S_v$ now proceeds as follows. At the first step, we project the boxes in $\mathbb B_1$ onto their $(x_2, \cdots, x_v)$ coordinates (each $n$-dimensional), and use Lemma \ref{proj-lemma} below with $r=v$, $T = T_1$, $T' = T_2 \times \cdots \times T_v$ and $\mathbb B = \mathbb B_1$ to arrive at a set $S_1 \subseteq T_1$ and a family of $n(v-1)$-dimensional boxes $\mathbb B' = \mathbb B_2$ obeying the conclusions of that lemma. Clearly the set $S_1$ obeys the requirements of part (\ref{S_i-description}) of the proposition.   
Lemma \ref{proj-lemma} also ensures that 
\[ \#(\mathbb B_2) \leq M^{n+1} N^n \ell^n \#(\mathbb B_1) \leq M^{n+1} N^n \ell^{m-n(v-1)}, \]  
and that $f(x) \ne 0$ for any $x = (x_1, x')$ such that $x_1 \in S_1$ and any $x' \in T_2 \cdots \times T_v$ that is not contained in the cubes constituting $\mathbb B_2$.       
\vskip0.1in
\noindent We now inductively follow a procedure similar to the above. At the end of step $j$, we will have selected sets $S_1 \subseteq T_1, \cdots, S_j \subseteq T_j$ and will be left with a family $\mathbb{B}_{j+1}$ of $n(v-j)$-dimensional cubes of sidelength $\ell$, such that 
\begin{align} 
&\#(\mathbb B_{j+1}) \leq  M^{(n+1)j} N^{jn} \ell^{m - n(v-j)} \text{ and } \\
\label{induction-j} & \left\{ \begin{aligned} &f(x'', x^{\prime}) \ne 0 \text{ for } x'' = (x_1, \cdots, x_j) \in \prod_{i=1}^{j}S_i, \; x^{\prime} \in \prod_{i=j+1}^{v} T_i, \\  & x' \text{ not contained in any of the cubes in $\mathbb B_{j+1}$}. \end{aligned} \right\} \end{align}   We can then apply Lemma \ref{proj-lemma} with \[ T = T_{j+1}, \quad T^{\prime} = T_{j+2} \times \cdots \times T_v, \quad \mathbb{B} = \mathbb{B}_{j+1} \] to arrive at a set $S_{j+1} \subseteq T_{j+1}$ meeting the requirement of part (\ref{S_i-description}) of the proposition. 
The lemma also gives a family $\mathbb B' = \mathbb{B}_{j+2}$ of $n(v-2)$-dimensional cubes of side length $\ell$, whose cardinality obeys the inequality  
 \eqref{induction-j} with $j$ replaced by $j+1$, allowing us to carry the induction forward.  
\vskip0.1in 
\noindent We continue this contruction for $(v-1)$ steps, obtaining sets $S_1, \ldots, S_{v-1}$ and a collection of $\mathbb{B}_{v}$ consisting of at most $C M^{(n+1)(v-1)} N^{n(v-1)} \ell^{m - n}$ cubes of sidelength $\ell$ and dimension $n$ contained in $T_v$. 
The set $S_v$ is then defined according to the prescription of Lemma \ref{proj-lemma-v}, the conclusion of which verifies part \eqref{f-nonzero} of the proposition for $S_1, \cdots, S_v$. 
\end{proof}

\vskip0.1in 
\subsubsection{Projections of bad boxes} It remains to justify the projection mechanism used repeatedly in Proposition \ref{non-simul-mainprop vector-valued}. We set this up below. 
\vskip0.1in
\noindent Fix $2 \leq r \leq v$, and consider sets $T \subseteq [0,1]^n$ and $T' \subseteq [0,1]^{n(r-1)}$ expressible as unions of axis-parallel cubes of sidelength $M^{-1}$. As before, we denote by $\mathcal J_M(T)$ and $\mathcal J_M(T')$ the respective collections of these cubes. Given any $J \in \mathcal J_M(T)$, we decompose $J$ into subcubes of sidelength $N^{-1}$; the corresponding collection is termed $\mathcal I_N(J)$.  We will also need to fix a subset $B \subseteq T \times T'$, which we view as a union of a collection $\mathbb B$ of cubes of sidelength $\ell$. Here $M, N$ and $\ell$ are as specified in Proposition \ref{non-simul-mainprop vector-valued}.       
\begin{lemma}\label{proj-lemma}
Given $T, T', B$ as above, there exist sets $S \subseteq T$, $B' \subseteq T'$ and a collection of boxes $\mathbb B' \subseteq \mathbb T'$ with the following properties:
\vskip0.1in 
\begin{enumerate}[(a)]
\item \label{S-construction} The set $S$ is a union of cubes of sidelength $\ell$. More precisely, for every $J \in \mathcal J_M(T)$, there exists $\mathcal I_N'(J) \subseteq \mathcal I_N(J)$ such that \[\#(\mathcal I_N'(J)) \geq (1 - M^{-1}) \#(\mathcal I_N(J)),\] and $S \cap I$ is a single $\ell$-cube for each $I \in \mathcal I_N'(J)$. For $I \in \mathcal I_N(J) \setminus \mathcal I_N'(J)$, the set $S \cap I$ is empty. 
\vskip0.1in   
\item \label{B'} The set $B'$ is the union of the $\ell$-cubes in $\mathbb B'$.
\vskip0.1in
\item \label{cardinality-B'} $\#(\mathbb B') \leq M^{n+1} N^n \ell^n \#(\mathbb B)$.
\vskip0.1in
\item \label{set-containment} $(S \times T') \cap B \subseteq S \times B'$. 
\end{enumerate}   
\end{lemma}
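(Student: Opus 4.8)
The plan is to construct $S$, $B'$ and $\mathbb{B}'$ by a two-stage pigeonholing argument: first thin out the $N^{-1}$-subcubes of each $J \in \mathcal{J}_M(T)$ so that the surviving ones each meet only few bad boxes, then within each surviving $N^{-1}$-cube pick a single $\ell$-subcube that is actually disjoint from all the bad boxes, and finally collect the projections of the remaining bad boxes onto the $T'$-coordinates. Concretely, for $J \in \mathcal{J}_M(T)$ and $I \in \mathcal{I}_N(J)$, say $I$ is \emph{good} if the number of cubes $Q \in \mathbb{B}$ whose $T$-projection meets $I$ is at most $M N^n \ell^n \#(\mathbb{B}) / \#(\mathcal{I}_N(J))$ — note $\#(\mathcal{I}_N(J)) = (N/M)^n$, so this threshold is $M^{n+1} \ell^n \#(\mathbb{B})$. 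Since each $Q \in \mathbb{B}$ has $T$-projection an $\ell$-cube, it meets at most $(\lceil \ell N \rceil + 1)^n \le (2)^n$ — actually, since $\ell \ll 1/N$, at most $2^n$ and generically $1$ — of the $N^{-1}$-cubes $I$; summing over $Q$ and over all $J$, the total count is $O(\#(\mathbb{B}))$, so by pigeonhole the fraction of $I \in \mathcal{I}_N(J)$ that fail the threshold is at most $1/M$. This gives $\mathcal{I}_N'(J)$ with $\#(\mathcal{I}_N'(J)) \ge (1 - M^{-1})\#(\mathcal{I}_N(J))$, establishing the cardinality bound in part (\ref{S-construction}).

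Next, for each good $I \in \mathcal{I}_N'(J)$, I want to select one $\ell$-subcube of $I$ disjoint from every $Q \in \mathbb{B}$. The number of $\ell$-subcubes of $I$ is $(\ell N)^{-n}$, and the number that are "blocked" by some $Q \in \mathbb{B}$ whose $T$-projection meets $I$ is at most $C \cdot M^{n+1}\ell^n \#(\mathbb{B}) \cdot (\text{thickness factor})$ — but here I must be careful: a single bad box $Q$ can have its $T$-projection cover up to all of $I$ in principle. The clean way is to observe that the bad boxes $Q \in \mathbb{B}$ that meet $I \times T'$ have $T$-projections that are $\ell$-cubes inside $I$ (or slightly overlapping the boundary), so they block at most $2^n$ times their count many $\ell$-subcubes; for a good $I$ this count is $\le M^{n+1}\ell^n\#(\mathbb{B})$, and using the crude global bound $\#(\mathbb{B}) \le C\ell^{-nv+m}$ from \eqref{B0-size} together with $\ell = c_1 N^{n(1-v)/m}$ one checks $M^{n+1}\ell^n \#(\mathbb{B}) \ll (\ell N)^{-n}$ provided $c_1$ is chosen small enough in terms of $M$ — this is exactly where the exponent $R = [(n+1)v+1]/m$ and the smallness of $c_1$ come in. Hence a free $\ell$-subcube exists; call it $S \cap I$, and let $S$ be the union of these over all good $I$ and all $J$, which gives parts (\ref{S-construction}) completely.

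For the last two parts, define $\mathbb{B}'$ to be the collection of $T'$-projections of those $Q \in \mathbb{B}$ such that the $T$-projection of $Q$ meets $S$ (equivalently, meets the chosen $\ell$-subcube $S \cap I$ for some good $I$); these projections are $n(r-1)$... rather $n(r-1)$-dimensional $\ell$-cubes living in $\mathcal{I}_{\ell^{-1}}$-grids inside $T'$, and we take $B' = \bigcup \mathbb{B}'$. Part (\ref{B'}) is then immediate. For part (\ref{cardinality-B'}), each good $I$ contributes only the bad boxes meeting its single chosen $\ell$-subcube; since that $\ell$-subcube is a single $\ell$-cube, it is met by at most $O(1)$ — in fact, absorbing constants, we bound the number of $Q$ meeting $S \cap I$ by the total number of bad $Q$'s whose $T$-projection meets $I$, which for a good $I$ is $\le M^{n+1}\ell^n\#(\mathbb{B})$; summing over the $\le (N/M)^n$ cubes $I$ per $J$ and $\le M^n$ cubes $J$ gives $\#(\mathbb{B}') \le M^n \cdot (N/M)^n \cdot M^{n+1}\ell^n \#(\mathbb{B}) = M^{n+1} N^n \ell^n \#(\mathbb{B})$, which is exactly the asserted bound. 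Finally part (\ref{set-containment}): if $(x, x') \in (S \times T') \cap B$, then $(x,x')$ lies in some $Q \in \mathbb{B}$; since $x \in S$, the $T$-projection of $Q$ meets $S$, so by definition the $T'$-projection of $Q$ is in $\mathbb{B}'$, whence $x' \in B'$, i.e. $(x,x') \in S \times B'$.

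The main obstacle I anticipate is the counting in the second paragraph: making precise that a good $N^{-1}$-cube $I$ always contains an $\ell$-subcube missing all bad boxes. This requires the interplay between three scales $M^{-1} \gg N^{-1} \gg \ell$ and a careful choice of $c_1$ (hence $\ell$) small relative to $M$ so that the number of $\ell$-cubes blocked inside a good $I$ is a genuinely small fraction of the $(\ell N)^{-n}$ available ones; this is the step where the specific power $R$ of $M$ in $c_1 = \epsilon_0 M^{-R}$ is forced, and one must track the global bound \eqref{B0-size} on $\#(\mathbb{B})$ through the estimate. Everything else is bookkeeping with grids and pigeonholing.
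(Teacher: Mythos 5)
Your argument has a genuine gap rooted in pigeonholing at the wrong scale and then asking for too much. You declare an $N^{-1}$-cube $I \in \mathcal I_N(J)$ good when at most $M^{n+1}\ell^n\#(\mathbb B)$ bad boxes have $T$-projection meeting $I$, and claim by incidence counting that at most a $1/M$-fraction of $I$'s are bad. But the incidence count only bounds the number of bad $I$'s by roughly $M^{-n-1}\ell^{-n}$, while the total number of $N^{-1}$-cubes over all of $T$ is at most $N^n$; since $\ell \ll N^{-1}$ one has $M^{-n-1}\ell^{-n} \gg N^n$, so the bound is vacuous and potentially \emph{every} $I$ is bad. The threshold $M^{n+1}\ell^n\#(\mathbb B)$ is the right one at the finer $\ell$-scale (the paper's ``wafers'' $W_{\ell^{-1}}[I']$), of which there are $M^{-n}\ell^{-n}$ in $J$, so the pigeonhole there does give a bad fraction $\le 1/M$. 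The missing device is a second-stage pigeonhole: call a slab $W_N[I]$ good when it contains \emph{at least one} good wafer; since each slab packs exactly $(N\ell)^{-n}$ wafers, at most a $1/M$-fraction of the slabs can fail to contain a good wafer. This wafer-then-slab decomposition is exactly what your argument is missing.

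Second, your plan to choose inside each good $I$ an $\ell$-subcube \emph{disjoint from every bad box} is unattainable in general and is not what the lemma requires. You verify $M^{n+1}\ell^n\#(\mathbb B) \ll (\ell N)^{-n}$ using $\#(\mathbb B)\le C\ell^{m-nv}$, but substituting $\ell = c_1 N^{n(1-v)/m}$ turns this into the requirement that $N^{n(v-2)(n(v-1)-m)/m}$ stay bounded, which holds only when $m = n(v-1)$; for $m < n(v-1)$ the exponent of $N$ is strictly positive and no choice of $c_1$ saves the inequality. The lemma does not need a free subcube: the chosen $\ell$-cube $S\cap I$ is permitted to sit under bad boxes, and those bad boxes are precisely what contribute their $T'$-projections to $\mathbb B'$ — indeed your own count of $\#(\mathbb B')$ already allows bad boxes over $S\cap I$, contradicting your disjointness claim a paragraph earlier. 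Finally, the lemma must hold for \emph{arbitrary} $\mathbb B$ since it is applied inductively to the families $\mathbb B_{j+1}$, whose cardinalities obey a different bound than \eqref{B0-size}; the paper's wafer/slab pigeonhole is unconditional in $\#(\mathbb B)$, whereas an argument that invokes $\#(\mathbb B)\le C\ell^{m-nv}$ inside the proof is not in the right shape for the induction.
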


\begin{proof}
Fix $J \in \mathcal J_M(T)$. For $I \in \mathcal I_N(J)$, define a ``slab''
\[W_N[I] := \bigcup \{Q = I \times I' \subseteq T \times T' : Q \text{ is a cube of sidelength } N{^-1} \}. \]
Thus a slab is the union of all of the axis-parallel boxes in $T \times T'$ of side length $\frac{1}{N}$ whose projection onto the $x_1$-coordinate is the cube $I$. Similarly, given an $n$-dimensional cube $I$ of sidelength $\ell$, we define a ``wafer'' $W_{\ell^{-1}}[I]$ to be the union of all cubes of sidelngth $\ell$ which project onto $I$ in the $x_1$-space. Let us observe that a slab is the disjoint union of exactly $N^{-n} \ell^{-n}$ wafers, and that the total number of wafers supported by $J$ is $M^{-n}\ell^{-n}$. A wafer in turn is a union of $\ell$-cubes. 
\vskip0.1in
\noindent Let us agree to call a wafer $W_{\ell^{-1}}[I^{\prime}]$ ``good'' if it contains at most $M^{n+1} \ell^n \#(\mathbb{B})$ boxes of $\mathbb{B}$. 
The pigeonhole principle dictates that the proportion of bad wafers is $ \leq \frac{1}{M}$. We will call a slab $W_N[I]$ ``good'' if it contains at least one good wafer. Again pigeonholing implies that no more than $\frac{1}{M}$-fraction of the slabs can be bad. 
Let us define $\mathcal I_N'(J)$ as the collection all cubes $I \in \mathcal I_N(J)$ such that $W_N[I]$ is good. For each cube $I \in \mathcal I_N'(J)$, we select one cube $I_0 = I_0(I) \subset I$ of sidelength $\ell$ such that $W_{\ell^{-1}}[I_0]$ is a good wafer. The set $S$ is now defined to be the union of all selected $\ell$-cubes $I_0(I)$, with $I \in \mathcal I_N'(J)$ and $J \in \mathcal J_M(T)$. Clearly, $S$ satisfies part (\ref{S-construction}) of the lemma. 
\vskip0.1in 
\noindent Let $B'$ be the union of the collection $\mathbb B'$ of all $\ell$-cubes $Q^{\prime} \subseteq T'$ such that $Q \times Q^{\prime} \in \mathbb{B}$ for some $\ell$-cube $Q \subseteq S$. Then (\ref{B'}) and (\ref{set-containment}) hold by definition. The selection algorithm for $S$ gives that for a given cube $Q \subseteq S$, the number of $Q^{\prime}$ such that $Q \times Q^{\prime} \in \mathbb{B}$ is  $\leq M^{n+1} \ell^n \# (\mathbb{B})$. On the other hand, each $Q \subseteq S$ comes from a distinct slab. Hence the total number of possible choices for $Q \subseteq S$ is no more than the total number of slabs, namely $N^n$. Combining all of this we get (\ref{cardinality-B'}) as desired.
\end{proof}
\vskip0.1in
\noindent A version of the lemma above is needed for the extreme case $r=1$. We needed this in the final step of the iterative process described in Proposition \ref{non-simul-mainprop vector-valued}, specifically in the construction of $S_v$. 
\begin{lemma} \label{proj-lemma-v} 
Let $T \subseteq [0,1]^n$ be the union of axis-parallel cubes of sidelength $M^{-1}$ and $B \subseteq T$ a union of such cubes of sidelength $\ell$. Decompose $T$ and $B$ into cubes of sidelength $N^{-1}$ and $\ell \ll N^{-1}$ respectively, denoting the corresponding collections $\mathbb T$ and $\mathbb B$. Suppose that 
\[ \#(\mathbb B) \leq C M^{(n+1)(v-1)} N^{n(v-1)} \ell^{m-n}, \quad \text{ with } \quad \ell \leq C^{-\frac{1}{m}} M^{-\frac{1}{m}\left( (n+1)v + 1\right)} N^{-\frac{n(v-1)}{m}}. \] Then there exist $S \subseteq T$ and $\mathbb T^{\ast} \subseteq \mathbb T$ such that 
\vskip0.1in 
\begin{enumerate}[(a)]
\item $S \cap B = \emptyset$. 
\vskip0.1in
\item $\#(\mathbb T^{\ast}) \geq (1 - 1/M) \#(\mathbb T)$. 
\vskip0.1in
\item $S$ is a union of a large number of $\ell$-cubes coming from $\mathbb T^{\ast}$. More precisely, $|S \cap I| \geq (1 - M^{-1}) N^{-n}$ for each $I \in \mathbb T^{\ast}$.   
\end{enumerate} 
\end{lemma}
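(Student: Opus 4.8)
The plan is to carry out a single pigeonholing step at scale $N^{-1}$: delete from each $N^{-1}$-cube all the $\ell$-cubes that belong to $\mathbb B$, and discard the few $N^{-1}$-cubes where this deletion would be too costly. Concretely, for $I \in \mathbb T$ let $\mathbb B(I)$ be the collection of $\ell$-cubes of $\mathbb B$ contained in $I$; since $\ell \ll N^{-1}$ and the decompositions are nested, the $\mathbb B(I)$ partition $\mathbb B$, and each $I$ contains exactly $(N\ell)^{-n}$ cubes of the $\ell$-grid. I would call $I$ \emph{good} if $\#(\mathbb B(I)) \le M^{-1}(N\ell)^{-n}$, let $\mathbb T^{\ast}$ be the set of good cubes, and for $I \in \mathbb T^{\ast}$ define $S \cap I$ to be the union of those $\ell$-cubes of the grid inside $I$ that do not belong to $\mathbb B$, so that $S := \bigcup_{I \in \mathbb T^{\ast}}(S \cap I)$. (If one insists that $S$ be disjoint from the closed set $B$ rather than from the interiors of the $\mathbb B$-cubes, one additionally removes the grid-neighbours of the cubes of $\mathbb B(I)$, at the cost of a harmless dimensional factor $3^{n}$ that may be absorbed into $C$.)

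Conclusions (a) and (c) are then immediate. Since $S$ meets no $\ell$-cube of $\mathbb B$ we have $S \cap B = \emptyset$, which is (a), and $S$ is visibly a union of $\ell$-cubes coming from the cubes of $\mathbb T^{\ast}$. For the measure estimate, each good $I$ satisfies
\[ |S \cap I| = N^{-n} - \#(\mathbb B(I))\,\ell^{n} \ge N^{-n} - M^{-1}(N\ell)^{-n}\ell^{n} = (1 - M^{-1})N^{-n}, \]
which is exactly (c); the threshold $M^{-1}$ in the definition of ``good'' is dictated by precisely this requirement.

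The only real content is (b), where both hypotheses enter. Every \emph{bad} cube carries strictly more than $M^{-1}(N\ell)^{-n}$ distinct cubes of $\mathbb B$, so summing over bad cubes gives $\#(\mathbb T \setminus \mathbb T^{\ast}) < M(N\ell)^{n}\,\#(\mathbb B) \le C\,M^{(n+1)(v-1)+1}\,N^{nv}\,\ell^{m}$ by the assumed bound on $\#(\mathbb B)$. We may assume $T \ne \emptyset$, so $T$ contains at least one $M^{-1}$-cube and $\#(\mathbb T) \ge (N/M)^{n}$; it therefore suffices to check that the previous quantity is at most $M^{-1}(N/M)^{n} = M^{-(n+1)}N^{n}$. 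Rearranging, this amounts to $\ell^{m} \le C^{-1}M^{-((n+1)v+1)}N^{-n(v-1)}$ (using $(n+1) + (n+1)(v-1) + 1 = (n+1)v + 1$), which is exactly the smallness hypothesis on $\ell$. Hence $\#(\mathbb T^{\ast}) \ge (1 - M^{-1})\#(\mathbb T)$, which is (b). I do not anticipate a genuine obstacle: the whole argument is the bookkeeping that forces the good/bad threshold to be simultaneously small enough for (c) and, after inserting the cardinality bound for $\mathbb B$ and the smallness of $\ell$, large enough for (b), and the computation above shows the stated hypothesis on $\ell$ is precisely the break-even point. In contrast with Lemma~\ref{implicit function lemma}, no separation (``padding'') of the deleted region is logically required, since $\mathbb B$ already enumerates every cube that needs to be avoided.
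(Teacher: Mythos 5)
Your proof is correct and is essentially the same pigeonholing argument as the paper's; the only cosmetic difference is that you set the good/bad threshold at $M^{-1}(N\ell)^{-n}$ so that conclusion (c) is automatic and the hypothesis on $\ell$ is spent on (b), whereas the paper sets the threshold at $M^{n+1}N^{-n}\#(\mathbb B)$ so that (b) is automatic by pigeonhole and the hypothesis on $\ell$ is spent on (c) --- the two choices are equivalent, as your ``break-even'' remark already observes.
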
  
\begin{proof}
Decomposing each cube $I \in \mathbb T$ into subcubes of sidelength $\ell$, we declare $I$ to be good if it contains $\leq M^{n+1} N^{-n} \#(\mathbb B)$ subcubes that are in $\mathbb B$. As in the proof of Lemma \ref{proj-lemma}, the pigeonhole principle ensures that the fraction of bad cubes in $\mathbb T$ is at most $M^{-1}$. Define $\mathbb T^{\ast}$ to be the collection of good cubes in $\mathbb T$, and $S$ to be the collection of all subcubes of sidelength $\ell$ that are contained in the cubes of $\mathbb T^{\ast}$ but are disjoint from $B$. The relation between $\ell$, $M$ and $N$ implies that for every $I \in \mathbb T^{\ast}$, 
\[ |I \cap B| \leq M^{n+1} N^{-n}n\#(\mathbb B_v) \ell^n \leq C M^{(n+1)v} N^{n(v-2)} \ell^m \leq M^{-1} N^{-n}, \]
which justifies the size conclusion for $S$.    
\end{proof} 
\section{Proof of Theorems 1.1 and 1.2} \label{Proofs of non-simul thms} 
\noindent We present the construction of the set $E$ in Theorem \ref{non-simul-thm} in complete detail. The construction for Theorem \ref{non-simul-thm vector-valued} is similar. The small variations needed for this have been discussed in subsection \ref{construction-mods}.
\subsection{A sequence of differential operators} 
We will need to define a sequence of privileged derivatives in order to prove Theorem 1.1. For $\eta$ and $r_q$ as in the statement of Theorem \ref{non-simul-thm}, let $\alpha_q$ be a $v$-dimensional multi-index with $|\alpha_q| = r_q$ such that $\partial^{\alpha_q}f_q/\partial x^{\alpha_q}$ is nonvanishing everywhere on $[0, \eta]$. Here $\partial^{\beta}/\partial x^{\beta}$ denotes, following standard convention, the differential operator $\partial^{\beta_1+ \cdots + \beta_v}/\partial x_1^{\beta_1} \cdots \partial x_v^{\beta_v}$ of order $|\beta| = \beta_1 + \cdots = \beta_v$, if $\beta = (\beta_1, \cdots, \beta_v)$. We now define for each $q$ a finite sequence of privileged differential operators of diminishing order 
\begin{equation}  \mathcal D_{q}^{k} = \frac{\partial^{\alpha_{qk}}}{\partial x^{\alpha_{qk}}}, \qquad 0 \leq k \leq r_q.  \label{diffop}\end{equation} 
Here $\alpha_{q r_q} = \alpha_q$, and $\alpha_{q,k-1}$ is obtained by reducing the largest entry of $\alpha_{qk}$ by 1 and leaving the others unchanged. If there are multiple entries of $\alpha_{qk}$ with the largest value, we pick any one. Clearly $|\alpha_{qk}| = k$.      
\subsection{Construction of $E$}
The construction is of Cantor type with a certain memory-retaining feature inspired by the constructions
of Keleti \cite{{K98},{K08}}. This distinctive feature is the existence of an accompanying
queue that is, on one hand, generated by the construction and on the
other, contributes to it. More precisely, the $j$-th iteration of the construction is
predicated on the $j$-th member of the queue; at the same time the $j$-th step also
adds a large number of new members to the queue that become significant at
a later stage.
\subsubsection*{Step 0:}
At the initializing step, we set for $k = 1, \cdots, v$, 
\[ I_k[0] = \left[ (k-1) \frac{\eta}{v}, \frac{k \eta}{v} \right], \qquad \mathcal{E}_0 = \{ I_1[0], \ldots, I_{v}[0] \}, \qquad M_0 = \frac{v}{\eta}. \]
Letting $\Sigma_0$ denote the collection of injective mappings from $\{1, \ldots, v-1\}$ into $\{1, \ldots, v\}$, we define an ordered queue
\begin{align*} \mathcal{Q}_0 &= \{(1, m, \mathbb{I}_{\sigma}[0]): 0 \leq m \leq r_1 - 1, \sigma \in \Sigma_0\}, \quad \text{ where }  \\ 
\mathbb I_{\sigma}[0] &= (I_{\sigma(1)}[0], \ldots, I_{\sigma(v-1)}[0]).
\end{align*} 
The ordering in $\mathcal{Q}_0$ is as follows: Viewing $\Sigma_0$ as a collection of $(v-1)$-tuples with values from $\{1, \cdots, v \}$, we first endow $\Sigma_0$ with the lexicographic ordering, writing $\Sigma_0 = \{ \sigma_1 < \sigma_2 < \ldots \}$. Then $(1, m, \mathbb{I}_{\sigma_r}[0])$ precedes $(1, m^{\prime}, \mathbb{I}_{\sigma_{r'}}[0])$ in the list $\mathcal{Q}_0$ if one of the following scenarios holds: (a) $r < r'$ or (b) $r = r'$ and $m > m'$. 
\subsubsection*{Step 1:} Consider the first member of $\mathcal{Q}_0$, which is $(1, r_1 -1, \mathbb{I}_{\sigma_1}[0])$. Recalling the definition \eqref{diffop},  we observe that the hypotheses of Proposition \ref{non-simul-mainprop} are verified by \[ f = D^{r_1 - 1}_1 f_1, \quad (T_i : i \ne i_0) = \mathbb I_{\sigma_1}[0], \quad M = M_0. \] 
Here $i_0 = i_0(1)$ is the unique index in $\{1, 2, \cdots, v \}$ such that $\frac{\partial f}{\partial x_{i_0}} = \mathcal D_1^{r_1}f_1$, which is nonzero on $[0, \eta]$. The set $T_{i_0}$ will be the complement in $[0, 1]$ of $\cup_i \{ T_i : i \ne i_0\}$. The conclusion of Proposition \ref{non-simul-mainprop} therefore holds for some small constant $d_0 = c_1(M_0, \mathbb{T}) > 0$ and all sufficiently large integers $N_1$. We choose $N_1$ large enough so that $N_1 > e^{M_0}$, and sufficiently large that $N_1$ is greater than the value $M_0$ required for the application of Proposition \ref{non-simul-mainprop} for the function $f$ represented by the second queue element. In particular, Proposition \ref{non-simul-mainprop} ensures the existence of subsets $S_j \subset T_j$ for $1 \leq j \leq v$, each of which is a union of intervals of length $\ell_1 = \frac{d_0}{N_1^{v-1}}$ with 
\[ \mathcal D_1^{r_1 - 1} f_1(x) \neq 0 \text{ for }  x = (x_1, \cdots, x_v) \in S_1 \times \cdots \times S_v.\]
These constitute the basic intervals for the first stage.
\vskip0.1in 
\noindent 
Let $\mathcal{E}_1 = \{I_1[1], I_2[1], \cdots I_{L_1}[1]\}$ be an enumeration of the first stage basic intervals, and $\Sigma_1$ the collection of injective mappings from $\{ 1, \cdots, v-1 \}$ to $\{1, \cdots, L_1 \}$. We view an element of $\Sigma_1$ as an ordered $(v-1)$-tuple of distinct indices from $\{1, \cdots, L_1 \}$. As before, $\Sigma_1$ is arranged lexicographically.  Set \[\mathcal{Q}_1^{\prime} = \{(q, k, \mathbb{I}_{\sigma}[1]); 1 \leq q\leq 2; \; 0 \leq k \leq r_{q} - 1; \; \sigma \in \Sigma_1\}, \]
with $\mathbb I_{\sigma}[1] = (I_{\sigma(1)}[1], \cdots, I_{\sigma(v-1)}[1])$. 
The list $\mathcal{Q}_1^{\prime}$ is assigned the following ordering: an element of the form $(q, k, \mathbb{I}_{\sigma}[1])$ will precede $(q^{\prime}, k^{\prime}, \mathbb{I}_{\sigma'}([1]))$ if one of the following conditions holds: (a) $\sigma < \sigma'$, or (b) $\sigma = \sigma'$, $q<q^{\prime}$ or (c) $\sigma = \sigma'$,  $q=q^{\prime}$ and $k > k^{\prime}$.  The list $\mathcal{Q}_1^{\prime}$ is appended to $\mathcal{Q}_0$ to arrive at the updated queue $\mathcal{Q}_1$ at the end of step 1. 
\subsubsection*{The general step:}
In general, at the end of step $j$, we have the following quantities:
\begin{enumerate}[-] 
\item The $j$th iterate of the construction $E_j$, which is the union of the $j$th-level basic intervals of length $\ell_j = d_{j-1}/N_j^{v-1}$. Here $d_j$ is a sequence of small constants obtained from repeated applications of Proposition \ref{non-simul-mainprop} and depending on the collection of functions $\{f_q : q \leq j + 1 \}$. In particular, $d_{j-1}$ only depends on parameters involved in the first $(j-1)$ steps of the construction. The sequence $N_j$ is chosen to be rapidly increasing. For instance, choosing 
\begin{equation} \label{rapid-increase}
N_{j+1} > \exp \Biggl[ \prod_{k=1}^j \Bigl(\frac{N_k}{d_k} \Bigr)^{R}\Biggr] \quad \text{ for all } j \geq 1 
\end{equation} 
and some fixed large constant $R = R(v,n,m)$ would suffice. 
\vskip0.1in
\item The collection of the $j$th level basic intervals that constitute $E_j$, which we denote by $\mathcal E_j = \{I_1[j], I_2[j], \cdots, I_{L_j}[j] \}$. 
\vskip0.1in
\item The updated queue $\mathcal{Q}_j = \mathcal Q_{j-1} \cup \mathcal Q_j'$, with \[ \mathcal Q_j' = \{ (q,k, \mathbb I_{\sigma}[j]) : \; 1 \leq q\leq j+1, \; 0 \leq k \leq r_{q} - 1, \; \sigma \in \Sigma_j \}. \] Here $\Sigma_j$ is the collection of all injective maps from $\{1, \ldots, v-1\}$ to $\{1, \ldots, L_j\}$, which is viewed as the collection of all $(v-1)$-dimensional vectors with distinct entries taking values in $\{1, \cdots, L_j \}$ and endowed with the lexicographical order. The new list $\mathcal{Q}_j'$ is ordered in the same way as described in step 1 and appended to $\mathcal Q_{j-1}$. Notice that the number of members in the list $\mathcal Q_j$ is much larger than $j$. 
\end{enumerate} 
\vskip0.1in
\noindent We also know that $\mathcal{D}_q^k f_q(x)$ is nonzero for certain choices of $k, q,$ and $x$ with $k \leq r_q - 1$. Specifically, given any tuple of the form $(q, k, \mathbb I)$ that appears among the first $j$ members of the list $\mathcal{Q}_j$, the construction yields that 
\begin{equation}  |\mathcal D_q^k f_{q}(x)|  > 0 \text { if }  x_i \in E_j \cap I_i \text{ for } i \ne i_0, \; x_{i_0} \in E_j \setminus (I_1 \cup \cdots \cup I_{v-1}). \label{nonvanishing-derivs}  \end{equation} 
Here $i_0$ is the distinguished index such that $\partial \mathcal D^k_{q}f_q/ \partial x_{i_0} = \mathcal D_q^{k+1} f_q$. The $(v-1)$-tuple of intervals $\mathbb I$ has been labeled as $\mathbb I = (I_i : i \ne i_0)$. 
\vskip0.1in 
\noindent At step $(j+1)$, we refer to the $(j+1)$st entry of the queue $\mathcal Q_j$, which we denote by $(q_{0}, k_{0}, \mathbb I)$. Two cases can occur, depending on whether $k_{0}$ is maximal for the given $q_{0}$ or not. If it is, that means $k_{0} = r_{q_{0}} - 1$ for some $1 \leq q_{0} \leq j+1$. We want to apply Proposition \ref{non-simul-mainprop} with $M^{-1} =\ell_j$, 
\begin{equation}  
f = \mathcal D_{q_0}^{r_{q_0} - 1} f_{q_0},  \quad T_i = \begin{cases} 
E_j \cap I_i &\text{ if } i \ne i_0, \\  
E_j \setminus \bigcup_{i \ne i_0} T_i &\text{ if } i = i_0. \end{cases} \label{T-def} \end{equation}  
In this case, the nonvanishing derivative condition required for the application of Proposition \ref{non-simul-mainprop} is ensured by the hypothesis of Theorem \ref{non-simul-thm}. 
\vskip0.1in
\noindent The other possibility is when $k_{0} < r_{q_{0}}-1$. Given the specified ordering on $\mathcal{Q}_j$, we conclude that $(q_{0}, k_{0} + 1, \mathbb I)$ must be the $j$th member of $\mathcal{Q}_j$, and hence, by the induction hypothesis, \eqref{nonvanishing-derivs} holds with $q=q_0$ and $k = k_0+1$.  
We can now apply Proposition \ref{non-simul-mainprop} with $f = \mathcal D^{k_0}_{q_0} f_{q_{0}}$, $M^{-1} = \ell_j$, and the same choices of $i_0$ and $T_1, \cdots, T_v$ as in \eqref{T-def} above.
\vskip0.1in 
\noindent In either case, we obtain a collection $\mathcal{E}_{j+1}$ of $(j+1)$th level basic cubes of length $\ell_{j+1} = d_{j+1}/N_{j+1}^{v-1}$, the union of which is $E_{j+1}$, and for which \eqref{nonvanishing-derivs} holds with $q = q_0, k = k_0$ and $j$ replaced by $(j+1)$. This completes the induction. 
\subsection{Modifications to the construction of $E$ for Theorem \ref{non-simul-thm vector-valued}}\label{construction-mods}
The main distinction for Theorem \ref{non-simul-thm vector-valued} is that we only need to consider the first derivative $Df_q$ of $f_q$, so there is no need for the higher-order differential operators $\mathcal D_q^k$. What this means is that the elements of the queue $\mathcal{Q}_j^{\prime}$  are of the form $(q, \mathbb {I}_{\sigma}[j] )$, where $q$ ranges from $1$ to $j$ and $\mathbb{I}_{\sigma}$ is a tuple of cubes instead of intervals, and one needs to appeal to Proposition \ref{non-simul-mainprop vector-valued} instead of Proposition \ref{non-simul-mainprop}. The number of sub-cubes of $[0,\eta]^{nv}$ at the initializing step needs to be chosen large enough, so that their sidelengths do not exceed $M_0^{-1}$, as specified in the hypotheses of Proposition \ref{non-simul-mainprop vector-valued}. This is simply to ensure that Proposition \ref{non-simul-mainprop vector-valued} is applicable. 
%
%
\vskip0.1in
\noindent From this point forward, no distinction will be made between Theorem \ref{non-simul-thm} and the $m=1, n=1$ case of \ref{non-simul-thm vector-valued}. The computation of the Hausdorff and Minkowski dimensions of the set $E$ in these two cases proceeds in exactly the same manner.
\subsection{Nonexistence of solutions}
Fix any $q \geq 1$, and a tuple $x = (x_1, \cdots, x_v)$ of distinct points in $E$. Then there exists a step $j \geq q$ in the construction of $E$ where they lie in distinct basic intervals (in the case of Theorem \ref{non-simul-thm}) or cubes (in the case of Theorem \ref{non-simul-thm vector-valued}) of that step. Suppose that $\mathbb I^{\ast} = (I_1^{\ast}, \cdots, I_{v-1}^{\ast})$ is the tuple of $j$-th stage basic intervals such that $x_i \in I_i^{\ast}$. Set. Then the tuple $(q, 0, \mathbb I^{\ast})$ (or $(q, \mathbb{I}^{\ast})$ in the case of Theorem $1.2$) belongs to the list $\mathcal Q_j$. Suppose that it is the $j_0$th member of $\mathcal Q_j$, $j_0 \gg j$. This tuple then plays a decisive role at the $j_0$th step of the construction, at the end of which we obtain (either from Proposition \ref{non-simul-mainprop} or \ref{non-simul-mainprop vector-valued}) that $f_q$ does not vanish on $\prod_{i=1}^{v} E_{j_0} \cap I_i^{\ast}$.  Since $x$ lies in this set, we are done. 
\subsection{Hausdorff dimension of $E$}
Frostman's lemma dictates that the Hausdorff dimension of a Borel set $E$ is the supremum value of $\alpha > 0$ for which one can find a probability measure supported on $E$ with $\sup_{x,r} \mu(B(x;r))/r^{\alpha} < \infty$, where $B(x;r)$ denotes a ball centred at $x$ of radius $r$. Keeping in mind that any ball is coverable by a fixed number of cubes, we aim to construct a probability measure $\mu$ on $E$ with the property that for every $\epsilon > 0$, there exists $C_{\epsilon} > 0$ such that
\begin{equation} \mu(I) \leq C_{\epsilon} l(I)^{\frac{m}{v-1} - \epsilon} \text{ for all cubes $I$}. \label{Frostman-condition} \end{equation} 
Here $l(I)$ denotes the sidelength of $I$. 
\vskip0.1in
\noindent Let us recall that $\mathcal{E}_j$ denotes the collection of all basic cubes with sidelength $\ell_j$ at step $j$ of the construction. Decomposing each cube in $\mathcal{E}_j$ into equal subcubes of length $1/N_{j+1}$, we denote by $\mathcal{F}_{j+1}$ the resulting collection of subcubes that contain a cube from $\mathcal{E}_{j+1}$.  Let $F_{j+1}$ be the union of the cubes in $\mathcal{F}_{j+1}$. We define a sequence of measures $\nu_{j+1}$ and $\mu_j$ supported respectively on $F_{j+1}$
and $E_j$ as follows. The measure $\mu_0$ is the uniform measure on $[0,1]^n$.  Given $\mu_j$, the measure $\nu_{j + 1}$ will be supported on $F_{j+1}$ and will be defined by evenly splitting the measure $\mu_j$ of each cube in $\mathcal{E}_j$ among its children in $\mathcal{F}_{j+1}$. Given $\nu_j$, the measure $\mu_j$ will be supported on $E_j$ and will be defined by evenly splitting the measure $\nu_j$ of each cube in $\mathcal{F}_j$ among its children in $\mathcal{E}_j$. It follows from the mass distribution principle that the measures $\mu_j$ have a weak limit $\mu$. We claim that $\mu$ obeys the desired requirement \eqref{Frostman-condition}.
\vskip0.1in
\noindent The proof of the claim rests on the following proposition, which describes the mass distribution on the basic cubes of the construction.
\begin{proposition} \label{Hausdorff-dim-prop}
Let $K \in \mathcal{E}_j$, $J \in \mathcal{F}_{j+1}$ with $J \subset K$. Then
\begin{enumerate}[(a)]
\item \[\mu(K)/|K| \leq \mu(J)/|J| \leq 2 \mu(K)/|K|.\] \label{mJ/J} 
\item \[\mu(J) \leq M_j |J|, \text{ where } M_j = \prod_{k=1}^j 2(\ell_j N_j)^{-n}. \] \label{mJ} 
\end{enumerate}
\end{proposition}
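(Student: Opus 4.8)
The plan is to read part~(\ref{mJ/J}) directly off the recursive definition of the measures $\mu_j,\nu_{j+1}$ together with the cube counts supplied by Proposition~\ref{non-simul-mainprop} (respectively Proposition~\ref{non-simul-mainprop vector-valued}), and then to deduce part~(\ref{mJ}) by telescoping that one-step estimate up the nested hierarchy $\mathcal E_0 \supset \mathcal F_1 \supset \mathcal E_1 \supset \mathcal F_2 \supset \cdots$. Two preliminary remarks underpin the whole argument. First, because the basic cubes at a step $j'>j$ are selected \emph{inside} the basic cubes of step $j$, each measure-splitting only redistributes mass within a cube, so the $\mu$-mass of a fixed $K\in\mathcal E_j$ is frozen at step $j$: one has $\mu(K)=\mu_j(K)$ and, likewise, $\mu(J)=\nu_{j+1}(J)$ for $J\in\mathcal F_{j+1}$. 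Second, fixing $K\in\mathcal E_j$ and writing $\mathcal I_{N_{j+1}}(K)$ for the family of \emph{all} $N_{j+1}^{-1}$-subcubes of $K$ (so $\#\mathcal I_{N_{j+1}}(K)=(N_{j+1}\ell_j)^n$), the subcubes of $K$ that lie in $\mathcal F_{j+1}$, i.e.\ that meet $\mathcal E_{j+1}$, are described by parts~(\ref{inoti0})--(\ref{i0}) of Proposition~\ref{non-simul-mainprop} (or part~(\ref{S_i-description}) of Proposition~\ref{non-simul-mainprop vector-valued}): since the parameter ``$M$'' of that proposition equals $\ell_j^{-1}$ at the $(j+1)$-st step, they form either all of $\mathcal I_{N_{j+1}}(K)$, if $K$ lies in a block $T_i$ with $i\ne i_0$, or a subfamily of cardinality at least $(1-\ell_j)(N_{j+1}\ell_j)^n$, if $K\subseteq T_{i_0}$. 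In either case the number $d_K$ of these subcubes satisfies $(1-\ell_j)(N_{j+1}\ell_j)^n\le d_K\le (N_{j+1}\ell_j)^n$.

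Granting this, part~(\ref{mJ/J}) is immediate: $\nu_{j+1}$ distributes $\mu(K)=\mu_j(K)$ evenly among the $d_K$ children of $K$, so $\mu(J)=\mu(K)/d_K$ for a child $J$, while $|J|/|K|=(N_{j+1}\ell_j)^{-n}$; hence
\[
\frac{\mu(J)/|J|}{\mu(K)/|K|}=\frac{(N_{j+1}\ell_j)^n}{d_K}\in\bigl[1,(1-\ell_j)^{-1}\bigr]\subseteq[1,2],
\]
the last inclusion holding because $\ell_j$ is minuscule.

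For part~(\ref{mJ}) I would prove by induction on $j$ that $\mu(K)/|K|\le M_j$ for every $K\in\mathcal E_j$; combined with part~(\ref{mJ/J}) this gives $\mu(J)\le M_j|J|$ for $J\in\mathcal F_{j+1}$. The base case uses only that $\mu_0$ is uniform. For the inductive step I would pass from level $k-1$ to level $k$ through the two half-steps ``$\mu_{k-1}\to\nu_k$'' and ``$\nu_k\to\mu_k$'': by the computation just used for part~(\ref{mJ/J}) the first multiplies the density of a cube by a factor in $[1,(1-\ell_{k-1})^{-1}]\subseteq[1,2]$, while the second, which splits the $\nu_k$-mass of a cube $J\in\mathcal F_k$ evenly among its (at least one) children in $\mathcal E_k$, multiplies the density by at most $|J|/|K''|=(N_k\ell_k)^{-n}$ for $K''\in\mathcal E_k$ with $K''\subset J$. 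So the density of a basic cube grows by a factor of at most $2(\ell_kN_k)^{-n}$ when $k$ is advanced by one, and telescoping over $k=1,\dots,j$ delivers the bound $M_j$ (whose constants are exactly those accumulated along the way). Here the hypothesis $v\ge 3$ (more generally $m\le n(v-1)$ in the vector-valued setting) is used to ensure $\ell_kN_k\le 1$, so that $(\ell_kN_k)^{-n}\ge 1$ and the density genuinely increases from level to level.

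The computations are all elementary; the one step I expect to demand care is the combinatorial bookkeeping behind the two-sided bound on $d_K$ and behind the count of $\mathcal E_{j+1}$-cubes inside a given $\mathcal F_{j+1}$-cube. One has to respect the asymmetry built into Proposition~\ref{non-simul-mainprop}: in the distinguished block $T_{i_0}$ the retained collection $\mathcal I_N'(J,i_0)$ is only a $(1-\ell_j)$-fraction of $\mathcal I_N(J,i_0)$ and a single $N^{-1}$-subcube may contain many basic cubes, so $S_{i_0}\cap I$ need not be a single cube; whereas in every other block $T_i$ ($i\ne i_0$) each $N^{-1}$-subcube contributes exactly one basic cube of sidelength $\ell_{j+1}$. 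Accounting for the mass correctly across these two regimes, and across the two interleaved scales $\mathcal E_\bullet$ and $\mathcal F_\bullet$, is really the only non-routine ingredient.
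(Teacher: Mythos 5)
Your proof is correct and proceeds essentially as the paper's: part (a) is read off the $(1-1/M)$--fraction count supplied by Propositions \ref{non-simul-mainprop} and \ref{non-simul-mainprop vector-valued} together with the even mass-splitting, and part (b) telescopes the one-step density estimate along the nested chain $\mathcal E_0 \supset \mathcal F_1 \supset \mathcal E_1 \supset \cdots$, which is exactly what the paper does by iterating $\mu(J)/|J| \le 2(\ell_j N_j)^{-n}\,\mu(\bar J)/|\bar J|$. One small note: your inductive claim $\mu(K)/|K| \le M_j$ for $K \in \mathcal E_j$, combined with part (a), actually yields $\mu(J) \le 2M_j|J|$ rather than $\mu(J) \le M_j|J|$; but the paper's own telescoping loses the same stray factor of $2$, and the constant is in any case immaterial for the Frostman estimate that this proposition feeds.
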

\begin{proof}
We first prove part (a). Each $K \in \mathcal E_j$ decomposes into $(\ell_l N_{j+1})^n$ subcubes of sidelength $1/N_{j+1}$. Propositions \ref{non-simul-mainprop} and \ref{non-simul-mainprop vector-valued}  assert that at least a $(1 - 1/M)$-fraction of these subcubes contain a cube from $\mathcal E_{j+1}$ and hence lies in $\mathcal F_{j+1}$. The number of descendants $J \in \mathcal{F}_{j+1}$ of a given cube $K \in \mathcal{E}_j$ is therefore at most $(\ell_j N_{j+1})^n = |K|/|J|$ and at least $(\ell_j N_{j+1})^n/2 = |K|/(2|J|)$. Since $\mu(K)$ is evenly distributed among such $J$, part (\ref{mJ/J}) follows. 
\vskip0.1in
\noindent We prove part (\ref{mJ}) by applying part (\ref{mJ/J}) iteratively. Suppose that $\bar J$ is the cube in $\mathcal{F}_j$ that contains $K$. Then
\[\frac{\mu(J)}{|J|} \leq 2 \frac{\mu(K)}{|K|}\leq 2 \frac{\mu(\bar J)}{K}
= \frac{2 |\bar J|}{|K|} \frac{\mu(\bar J)}{|\bar J|} = \frac{2}{(\ell_j N_j)^n} \frac{\mu(\bar J)}{|\bar J|}.\]
\end{proof}
\vskip0.1in
\noindent We are now ready to apply Proposition \ref{Hausdorff-dim-prop} to prove \eqref{Frostman-condition}. Suppose that $I$ is a cube with sidelength between $\ell_{j+1}$ and $\ell_j$. There are two possibilities: either $\frac{1}{N_{j+1}} \leq l(I) \leq \ell_j$ or $\ell_{j+1} \leq l(I) < \frac{1}{N_{j+1}}$. 
\vskip0.1in
\noindent In the first case $I$ can be covered by at most $C|I|N_{j+1}^n$ cubes of sidelength $1/N_{j+1}$, all of which could be in $\mathcal F_{j+1}$. If $J$ is a generic member of $\mathcal F_{j+1}$, we obtain from Proposition \ref{Hausdorff-dim-prop} that  
\begin{multline*} \mu(I) \leq C |I| N_{j+1}^n \mu(J) \leq C |I| N_{j+1}^n M_j |J| \leq C M_j |I| \\ \leq C \frac{2M_{j-1}}{(\ell_j N_j)^n} |I| \leq C {M_{j-1}}{d_{j-1}^{-\frac{m}{v-1}}} \ell_{j}^{\frac{m}{v-1} - n} |I|\leq C_{\epsilon} \ell_{j}^{\frac{m}{v-1} - n - \epsilon} |I| \leq C_{\epsilon} l(I)^{\frac{m}{v-1} - \epsilon}.     \end{multline*}  
Here the penultimate inequality follows from the rapid growth condition \eqref{rapid-increase} . 
\vskip0.1in
\noindent Let us turn to the complementary case, when $\ell_{j+1} \leq l(I) \leq N_{j+1}^{-1}$. If $\mu(I) > 0$, the cube $I$ intersects at least one cube $J$ in $\mathcal F_{j+1}$ in which case it is contained in the union of at most $(2n+1)$ cubes of the same dimension adjacent to it. Proposition \ref{Hausdorff-dim-prop} then yields that  
\begin{multline*} \mu(I) \leq C_n \mu(J) \leq C_n M_j |J| = C_n M_j N_{j+1}^{-n} \\ = C_n M_j d_j^{-\frac{m}{v-1}} \ell_{j+1}^{\frac{m}{v-1}} \leq C_{\epsilon} \ell_{j+1}^{\frac{m}{v-1} - \epsilon} \leq C_{\epsilon} l(I)^{\frac{m}{v-1} - \epsilon}, \end{multline*} 
applying \eqref{rapid-increase} as before at the penultimate stage. This establishes the claim \eqref{Frostman-condition}.  
\subsection{Minkowski dimension of $E$}
In order to establish the full Minkowski dimension of $E$, we show that for any $\epsilon > 0$, there exists $c_{\epsilon} > 0$ such that 
\begin{equation} \label{to-show-Minkowski-dim} 
\mathcal N_{\ell}(E) \geq c_{\epsilon} \ell^{-n+\epsilon} \text{ for any } 0 < \ell \ll 1.
\end{equation} 
Here $\mathcal N_{\ell} (E)$ denotes the smallest number of closed cubes of sidelength $\ell$ required to cover $E$. As before we study two cases, namely $\ell_{j+1} \leq \ell < 1/N_{j+1}$ and $1/N_{j+1} \leq \ell < \ell_j$. 
\vskip0.1in
\noindent If $\ell \in [\ell_{j+1}, 1/N_{j+1})$, we select $I \in \mathcal E_j$ of sidelength $\ell_j$ such that \begin{equation} \label{pickI} \begin{cases} I \subseteq T_{i_0(j+1)} \text{ for Theorem \ref{non-simul-thm}}, \\  I \subseteq T_{v}[j+1] \text{ for Theorem \ref{non-simul-thm vector-valued}}.\end{cases} \end{equation}  Here $i_0(j+1) \in \{1, \cdots, v\}$ denotes the preferred index at step $(j+1)$ of the construction, based on which Proposition \ref{non-simul-mainprop} is applied. On the other hand, $T_v[j+1]$ denotes the choice of $T_v$ at the $(j+1)$-th step for the purpose of applying Proposition \ref{non-simul-mainprop vector-valued}. In either case, $I \in \mathcal E_j$ can be partitioned into $(\ell_j N_{j+1})^n$ subcubes of sidelength $1/N_{j+1}$. It follows from \eqref{I_N'I_N} and \eqref{I_N'I_N vector-valued} in Propositions \ref{non-simul-mainprop} and \ref{non-simul-mainprop vector-valued} that  at least half of these subcubes lie in $\mathcal F_{j+1}$. Further, the conclusions \eqref{S_i0} and \eqref{vsize} of the propositions say that each $J \in \mathcal F_{j+1}$, \begin{equation}  \label{JE} |J \cap E_{j+1}| \geq \frac{N_{j+1}^{-n}}{2}.\end{equation} 
In view of the restriction $\ell \leq 1/N_{j+1}$ and \eqref{rapid-increase}, this leads to \begin{align*} \mathcal N_{\ell}(E) \geq \mathcal N_{\ell}(I \cap E) &\geq c \sum_J \left\{ \mathcal N_{\ell}(J \cap E) : J \subseteq I, \; J \in \mathcal F_{j+1} \right \} \\ &\geq c \sum_{J} \left\{ \frac{|J \cap E_{j+1}|}{\ell^n} : J \subseteq I, \; J \in \mathcal F_{j+1} \right\} \\ &\geq \frac{c}{2}(\ell_{j}N_{j+1})^n \times \frac{1}{2} \frac{N_{j+1}^{-n}}{\ell^n} = \frac{\ell_j^n}{4\ell^n} \geq c_{\epsilon} \ell^{n-\epsilon}. \end{align*}    
\vskip0.1in
\noindent Now let us consider the second case, where $\ell \in [1/N_{j+1}, \ell_j)$. The analysis is similar. Pick $I \in \mathcal E_{j-1}$ such that \eqref{pickI} holds with $j$ replaced by $(j-1)$.  As before, we decompose $I$ into cubes $J \in \mathcal F_j$, each of which obeys \eqref{JE}, also with $j$ replaced by $(j-1)$.
Since $\ell < \ell_j \leq 1/N_j$ an argument analogous to the one in the last paragraph leads to 
\begin{align*} \mathcal N_{\ell}(E) &\geq \mathcal N_{\ell}(E \cap I) \geq c \sum_{J} \left\{ \frac{|E_j \cap J|}{\ell^n}: J \subseteq I, \; J \in \mathcal F_j\right\} \\  &\geq \frac{c}{2}(\ell_{j-1} N_j)^n \times \frac{N_j^{-n}}{2 \ell^n} = \frac{\ell_{j-1}^n}{4\ell^n} \geq c_{\epsilon} \ell^{n-\epsilon},
\end{align*} 
with the last step using \eqref{rapid-increase} and the bounds on $\ell$.  This completes the proof. 
\section{Zero sets of functions with a common linearization} \label{Proof of simul-thm}
\noindent We now turn our attention to the proof of Theorem \ref{simul-thm}. Not surprisingly in view of the other results in this paper, it is also predicated on an iterative algorithm which has been encapsulated in Proposition \ref{mainprop-simul} below. The following lemma provides a preparatory step. 
\vskip0.1in 
\noindent  Let $\alpha \in \mathbb R^v$ be as in the statement of Theorem \ref{simul-thm}, and let $\mathfrak C$ be a nonempty strict subset of the index set $\{1, 2, \cdots, v\}$. Let $\delta > 0$. Consider disjoint intervals $[a_1, b_1]$ and $[a_2, b_2]$ of length $\lambda$, with $a_1 < b_1 < a_2 < b_2$. We define two quantities $\epsilon_{\text{left}}$ and $\epsilon_{\text{right}}$ depending on $\mathfrak C, a_1, b_1, a_2, b_2$ and $\delta$ as follows: 
\begin{align} 
\epsilon_{\text{left}} &:= \sup \left\{\epsilon: \bigl| \sum_{j=1}^{v} \alpha_j z_j \bigr| \geq \delta \lambda \text{ for }  \begin{cases} z_j \in [a_1, a_1 + \epsilon \lambda] &\text{ for all } j \notin \mathfrak{C} \\ z_j \in [a_2, a_2 + \epsilon \lambda] &\text{ for all }  j \in \mathfrak{C}. \end{cases} \right\} \label{alpha-def}\\
\epsilon_{\text{right}} &:= \sup \left\{\epsilon: \bigl| \sum_{j=1}^{v} \alpha_j z_j \bigr| \geq \delta \lambda \text{ for }  \begin{cases} z_j \in [a_1, a_1 + \epsilon \lambda] &\text{ for all } j \notin \mathfrak{C} \\ z_j \in [b_2 - \epsilon \lambda, b_2] &\text{ for all }  j \in \mathfrak{C}. \end{cases} \right\} \label{beta-def} 
\end{align} 
\begin{lemma} \label{simul-lemma} 
Given any $\alpha \in \mathbb R^v$ as in Theorem \ref{simul-thm}, there exists $\delta_0 > 0$ depending only on $\alpha$ such that for any $\lambda > 0$ and any choice of intervals $\mathfrak I_1 = [a_1, b_1]$ and $\mathfrak I_2 = [a_2, b_2]$ of equal length $\lambda$ with $a_1 < b_1 \leq a_2 < b_2$, the following property holds. For any $\delta < \delta_0$, there exists $\epsilon_0 = \epsilon_0(\mathfrak C, \delta)$ (not depending on $a_1, a_2, b_1, b_2,$ or $\lambda$) such that $\max(\epsilon_{\text{left}}, \epsilon_{\text{right}}) \geq \epsilon_0$. 
\vskip0.1in
\noindent In particular, there exist subintervals $\widehat{\mathfrak I}_1 \subseteq  \mathfrak I_1$ and $\widehat{\mathfrak I}_2 \subseteq \mathfrak I_2$ with $|\widehat{\mathfrak I}_1| = |\widehat{\mathfrak I}_2| = \epsilon_0 \lambda$ and dist$(\widehat{\mathfrak I}_1, \widehat{\mathfrak I}_2) \geq (1 - \epsilon_0) \lambda$ such that \[|\alpha \cdot x| \geq \delta \lambda \text{ for all $x \in \mathbb R^v$ such that } \begin{cases} x_j \in \widehat{\mathfrak I}_1 &\text{ for } j \not\in \mathfrak C, \\ x_j \in \widehat{\mathfrak I}_2 &\text{ for } j \in \mathfrak C.  \end{cases}  \]  
\end{lemma}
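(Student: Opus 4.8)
Write $s = \sum_{j \notin \mathfrak C}\alpha_j$ and $t = \sum_{j \in \mathfrak C}\alpha_j$, so that $s + t = 0$ by \eqref{alpha-cancellation}, and since $\mathfrak C$ is a proper nonempty subset and $\alpha$ lies in no coordinate hyperplane, neither $s$ nor $t$ is forced to vanish — but in fact what matters is that the pair $(s,t)$ is not both zero. Since $s = -t$, the quantity $|t| = |s|$ is a fixed positive or zero number; I claim it is positive, because if $t = 0$ then also $s = 0$, and then the linear functional $\alpha \cdot$ restricted to the diagonal-type configurations would be governed entirely by the $\varepsilon\lambda$-scale spread, which is too small. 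Actually the cleaner route: set $\delta_0 := |t|/4 = |s|/4$ — I will need to check $|t| > 0$, which follows from $\alpha \notin$ any coordinate hyperplane together with $\sum \alpha_j = 0$ as long as I also know $\mathfrak C$ is chosen so that $\sum_{j\in\mathfrak C}\alpha_j \neq 0$; if the paper's hypothesis \eqref{alpha-hypothesis} is in force this is automatic, so I will assume it. The point is that $\delta_0$ depends only on $\alpha$.

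**Main estimate.** Fix $\delta < \delta_0$ and the intervals $\mathfrak I_1 = [a_1,b_1]$, $\mathfrak I_2 = [a_2,b_2]$ of common length $\lambda$ with $a_1 < b_1 \le a_2 < b_2$. For a point $x$ with $x_j \in \mathfrak I_1$ for $j \notin \mathfrak C$ and $x_j \in \mathfrak I_2$ for $j \in \mathfrak C$, write $x_j = a_1 + u_j$ (for $j \notin \mathfrak C$, $u_j \in [0,\lambda]$) and $x_j = a_2 + w_j$ (for $j \in \mathfrak C$, $w_j \in [0,\lambda]$). Then
\begin{equation*}
\alpha \cdot x = s\, a_1 + t\, a_2 + \sum_{j \notin \mathfrak C}\alpha_j u_j + \sum_{j \in \mathfrak C}\alpha_j w_j = t\,(a_2 - a_1) + \sum_{j \notin \mathfrak C}\alpha_j u_j + \sum_{j \in \mathfrak C}\alpha_j w_j,
\end{equation*}
using $s = -t$. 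Now $a_2 - a_1 \ge \lambda$ (since $b_1 \le a_2$ and $b_1 = a_1 + \lambda$), so the main term $|t(a_2-a_1)| \ge |t|\lambda = 4\delta_0\lambda$. If we restrict the $u_j$ to lie in $[0,\varepsilon\lambda]$ and the $w_j$ to lie in either $[0,\varepsilon\lambda]$ (the "left" choice) or $[\lambda - \varepsilon\lambda,\lambda]$ (the "right" choice), the error terms are bounded by $\|\alpha\|_1\,\varepsilon\lambda$ in the left case; in the right case $w_j$ is near $\lambda$, contributing an extra $t\lambda$ which combines with $t(a_2-a_1)$ to give $t(b_2 - a_1) $, still of size $\ge |t|\lambda$ in absolute value since $b_2 - a_1 \ge \lambda$. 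So in at least one of the two cases — here is the one genuinely delicate point — the "extra" $t\lambda$ term either reinforces or is absorbed; I should pick the sign: choose "right" if $t(a_2 - a_1)$ and $t\lambda$ have the same sign (they do, trivially, since both equal $t$ times a positive number) — wait, that means "right" always at least matches "left." Let me instead just observe $|t(a_2-a_1)| \ge |t|\lambda$ already in the left case, so the left case alone works: set $\varepsilon_0 := \delta/\|\alpha\|_1$ (independent of $a_i,b_i,\lambda$), and then for $\varepsilon \le \varepsilon_0$,
\begin{equation*}
|\alpha \cdot x| \ge |t|\lambda - \|\alpha\|_1 \varepsilon\lambda \ge 4\delta_0\lambda - \delta\lambda \ge 3\delta\lambda \ge \delta\lambda,
\end{equation*}
so $\varepsilon_{\text{left}} \ge \varepsilon_0$, hence $\max(\varepsilon_{\text{left}},\varepsilon_{\text{right}}) \ge \varepsilon_0$.

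**Conclusion of the lemma and the obstacle.** The "in particular" clause is then immediate: take $\widehat{\mathfrak I}_1 = [a_1, a_1 + \varepsilon_0\lambda] \subseteq \mathfrak I_1$ and $\widehat{\mathfrak I}_2$ to be the subinterval of $\mathfrak I_2$ of length $\varepsilon_0\lambda$ achieving the supremum (left end $[a_2, a_2+\varepsilon_0\lambda]$ in the left case, right end in the right case); the distance between $\widehat{\mathfrak I}_1$ and $\widehat{\mathfrak I}_2$ is at least $a_2 - (a_1 + \varepsilon_0\lambda) \ge \lambda - \varepsilon_0\lambda = (1-\varepsilon_0)\lambda$ when $b_1 \le a_2$, and at least $(1 - \varepsilon_0)\lambda$ in the right case as well, and the displayed lower bound $|\alpha \cdot x| \ge \delta\lambda$ holds by construction. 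I expect the main obstacle to be the bookkeeping around \emph{which} subinterval of $\mathfrak I_2$ to select and verifying the distance bound uniformly in both the left/right alternatives and in the degenerate case $b_1 = a_2$; the arithmetic is elementary but one must be careful that $\varepsilon_0$ genuinely does not depend on the $a_i, b_i$, which is why it is pinned to $\delta/\|\alpha\|_1$ and the positive lower bound $|t| = |s| > 0$ extracted from the structure of $\alpha$. The role of the hypothesis on $\alpha$ (no coordinate hyperplane, and in particular $\sum_{j\in\mathfrak C}\alpha_j \ne 0$ for every proper nonempty $\mathfrak C$) is exactly to guarantee $\delta_0 > 0$; without it the main term could vanish and the lemma would fail.
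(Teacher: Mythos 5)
Your proof is correct, and it takes a cleaner route than the paper's. The key is your explicit computation $g(z^*) = t(a_2-a_1)$ where $t = \sum_{j\in\mathfrak C}\alpha_j$ (using $\sum_j\alpha_j = 0$ to cancel $s = \sum_{j\notin\mathfrak C}\alpha_j = -t$), together with the observation that $|\mathfrak I_1| = \lambda$ and $b_1 \le a_2$ force $a_2 - a_1 \ge \lambda$, hence $|g(z^*)| \ge |t|\lambda$ unconditionally. The ``left'' configuration therefore always works, and the alternative $\epsilon_{\text{right}}$ is never actually needed. The paper does not compute $g(z^*)$; instead it splits into two cases according to whether $|g(z^*)|$ exceeds the threshold $(\delta + \epsilon_0 C^*)\lambda$, and in the complementary case it shows that shifting the $\mathfrak C$-coordinates to the right endpoint $b_2$ of $\mathfrak I_2$ changes $g$ by $\pm|t|\lambda$ and pushes it away from zero. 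You rightly recognize that this second case is vacuous under the stated hypotheses; the two-case structure would only be needed if the separation $a_2 - a_1 \ge \lambda$ were not guaranteed. Two small housekeeping points: $\delta_0$ is supposed to depend only on $\alpha$ and not on $\mathfrak C$, so take the minimum of $|\sum_{j\in\mathfrak C}\alpha_j|$ over all nonempty proper subsets $\mathfrak C$ (the paper's $\delta_0 = C_0/2$ has the same formal $\mathfrak C$-dependence); and the requirement $\sum_{j\in\mathfrak C}\alpha_j \ne 0$ for every such $\mathfrak C$, which your argument needs, is indeed part of the standing hypothesis on $\alpha$ in Theorem \ref{simul-thm}, exactly as you note.
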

\begin{proof}
Set $g(y) = \sum_j \alpha_j y_j$, and consider $g(z^{\ast})$, where $z^{\ast} = (z_1^{\ast}, \cdots, z_v^{\ast})$ is defined to be the $v$-dimensional vector with $z_j^{\ast} = a_1$ if $j \notin \mathfrak{C}$ and $z_j^{\ast} = a_2$ if $j \in \mathfrak{C}$. Setting $C^{\ast} = \sum_j |\alpha_j|$, we note that \begin{equation} \label{2g}  |g(z) - g(z^{\ast})| \leq C^{\ast} \epsilon \lambda \quad \text{ whenever } |z_j - z_j^{\ast}| \leq \epsilon \lambda, \; 1 \leq j \leq v. \end{equation} 
\vskip0.1in
\noindent If $|g(z^\ast)| > (\delta + \epsilon_0 C^{\ast})\lambda$, then \eqref{2g} implies that $|g(z)| \geq \delta \lambda$ for any $z$ as in \eqref{alpha-def}. Therefore $\epsilon_{\text{left}} \geq \epsilon_0$, and the conclusion of the lemma holds with $\widehat{\mathfrak I}_1 = [a_1, a_1 + \epsilon_0 \lambda]$, $\widehat{\mathfrak I}_2 = [a_2, a_2 + \epsilon_0 \lambda]$. Otherwise, let $\widehat{z} = (\widehat{z}_1, \cdots, \widehat{z}_v)$ be the $v$-dimensional vector with $\widehat{z}_j = a_1$ if $j \notin \mathfrak C$ and $\widehat{z}_j = b_2$ if $j \in \mathfrak C$. Then $g(\widehat{z}) = g(z^{\ast}) + \alpha \cdot (\widehat{z} - z^{\ast}) = g(z^{\ast}) + (b_2-a_2)C_0 = g(z^{\ast}) + \lambda C_0$, where $C_0 = \bigl|\sum_{j \in \mathfrak{C}} \alpha_j\bigr| > 0$. Thus, for $z$ as in \eqref{beta-def}, we obtain the estimate 
\begin{multline*} |g(z)| \geq |g(\widehat{z})| - |\alpha \cdot (z - \widehat{z})| \geq |C_0 \lambda + g(z^{\ast})| - C^{\ast} \epsilon_0 \lambda \\ \geq C_0 \lambda - (\delta + C^{\ast} \epsilon_0) \lambda - C^{\ast} \epsilon_0 \lambda \geq C_0 \lambda - (\delta + 2 \epsilon_0 C^{\ast}) \lambda,\end{multline*} which is greater than or equal to $\delta \lambda$ provided that $\delta < C_0/2 =: \delta_0$ and $\epsilon_0 < (C_0 - 2 \delta)/(2C^{\ast})$. One has $\epsilon_{\text{right}} \geq \epsilon_0$ for this choice of $\epsilon_0$, with the conclusion of the lemma verified for $\widehat{\mathfrak I}_1 = [a_1, a_1 + \epsilon_0 \lambda]$, $\widehat{\mathfrak I}_2 = [b_2 - \epsilon_0 \lambda, b_2]$.
\end{proof}
\vskip0.1in 
\noindent {\em{Remarks: }} \begin{enumerate}[(a)]
\item Let us consider the example $\alpha = (1, -2, 1)$, which corresponds to a linear function $g$ that picks out three-term arithmetic progressions. Choose $\mathfrak{C}$ to be $\{3 \}$ . For $x_1, x_2 \in [a_1, a_1 + \epsilon \lambda]$ and $x_3 \in [a_2, a_2 + \epsilon \lambda]$, it is easy to see that \[ x_1 -2x_2 + x_3 \geq a_1 + a_2 - 2(a_1 + \epsilon \lambda) = a_2-a_1 - 2 \epsilon \lambda \geq (1 - 2 \epsilon) \lambda. \] We can therefore take $\epsilon_{\text{left}} = \frac{1 - \delta}{2}$. On the other hand, if $x_1, x_2 \in [a_1, a_1 + \epsilon \lambda]$ and $x_3 \in [b_2 - \epsilon \lambda, b_2]$, then 
\[ x_1 - 2x_2 + x_3 \geq a_1 + b_2 - \epsilon \lambda - 2(a_1 + \epsilon \lambda) = b_2 - a_1 - 3 \epsilon \lambda \geq (2 - 3 \epsilon) \lambda. \]
Thus $\epsilon_{\text{right}} = \frac{2-\delta}{3}$. 
The point is that, in the above lemma, it is possible in certain instances for both $\epsilon_{\text{left}}$ and $\epsilon_{\text{right}}$ to be bounded from below. The lemma guarantees that at least one of them will be.
\vskip0.1in 
\item It is important to be aware that the above proof does not necessarily give the best possible $\epsilon_0$ for a given $\delta$ because the signs of the components of $\alpha$ are not taken into account. When dealing with a specific $\alpha$, it is often possible to improve the bound on $\epsilon_0$ given above.
\end{enumerate}
\vskip0.1in 
\begin{proposition} 
Let $I$ be an interval of length $\ell$, and let $I_1$ and $I_2$ denote the two halves of $I$. Then for every sufficiently small $\delta > 0$ there exists $\epsilon(\delta) > 0$ and subintervals $I_1'$ and $I_2'$ of $I_1$ and $I_2$ of length $\epsilon \ell$, such that $|\alpha \cdot x| \geq \delta \ell$ for any choice of $x_1, x_2, \ldots, x_v \in I_1^{\prime} \cup I_2^{\prime}$, not all of which are in $I_i^{\prime}$ for a single $i=1,2$.  The subintervals $I_1'$ and $I_2'$ are separated by at least $\ell/4$. \label{mainprop-simul}
\end{proposition}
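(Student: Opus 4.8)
The plan is to build $I_1'$ and $I_2'$ by applying Lemma~\ref{simul-lemma} once for every nontrivial way of distributing the indices $\{1,\dots,v\}$ between $I_1$ and $I_2$. Enumerate the nonempty proper subsets of $\{1,\dots,v\}$ as $\mathfrak C_1,\dots,\mathfrak C_P$, $P=2^v-2$, and fix $\delta':=\tfrac12\min_{1\le p\le P}\bigl|\sum_{j\in\mathfrak C_p}\alpha_j\bigr|$. By the hypothesis on $\alpha$ (together with $\sum_j\alpha_j=0$ from~\eqref{alpha-cancellation}, which forces every proper partial sum of the $\alpha_j$ to be nonzero), $\delta'>0$, and $\delta'$ is below the threshold $\delta_0$ of Lemma~\ref{simul-lemma} for each choice $\mathfrak C=\mathfrak C_p$. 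Set $I_1^{(0)}=I_1$, $I_2^{(0)}=I_2$, which are adjacent intervals of common length $\lambda_0=\ell/2$. Inductively, given $I_1^{(p-1)}\subseteq I_1$ lying to the left of $I_2^{(p-1)}\subseteq I_2$, both of common length $\lambda_{p-1}$, apply Lemma~\ref{simul-lemma} with $\mathfrak C=\mathfrak C_p$, $\mathfrak I_1=I_1^{(p-1)}$, $\mathfrak I_2=I_2^{(p-1)}$, $\lambda=\lambda_{p-1}$ and $\delta=\delta'$; this produces subintervals $I_1^{(p)}\subseteq I_1^{(p-1)}$ and $I_2^{(p)}\subseteq I_2^{(p-1)}$ of common length $\lambda_p:=\epsilon_0^{(p)}\lambda_{p-1}$ with $\epsilon_0^{(p)}=\epsilon_0(\mathfrak C_p,\delta')$, such that $|\alpha\cdot x|\ge\delta'\lambda_{p-1}$ whenever $x_j\in I_1^{(p)}$ for all $j\notin\mathfrak C_p$ and $x_j\in I_2^{(p)}$ for all $j\in\mathfrak C_p$. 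Shrinking each $\epsilon_0^{(p)}$ if necessary, we may assume $\epsilon_0^{(p)}\le\tfrac12$. Finally set $I_1':=I_1^{(P)}$, $I_2':=I_2^{(P)}$ and $\epsilon:=\tfrac12\prod_{p=1}^{P}\epsilon_0^{(p)}$, so that $|I_1'|=|I_2'|=\lambda_P=\epsilon\ell$.

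I would then verify the three conclusions. The separation is already present after the first step: $I_1^{(1)}$ is the leftmost subinterval of $I_1$ of length $\epsilon_0^{(1)}\ell/2$ while $I_2^{(1)}\subseteq I_2$, so $\mathrm{dist}(I_1^{(1)},I_2^{(1)})\ge(1-\epsilon_0^{(1)})\tfrac{\ell}{2}\ge\tfrac{\ell}{4}$; and since $I_1^{(p)}\subseteq I_1^{(p-1)}$ always stays to the left of $I_2^{(p)}\subseteq I_2^{(p-1)}$, the mutual distance is nondecreasing in $p$, so $\mathrm{dist}(I_1',I_2')\ge\ell/4$. For the size estimate, suppose $x_1,\dots,x_v\in I_1'\cup I_2'$ are not all contained in a single $I_i'$. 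Since $I_1'\cap I_2'=\emptyset$, the points split along a partition $A\cup B=\{1,\dots,v\}$ with $A=\{j:x_j\in I_1'\}$ and $B=\{j:x_j\in I_2'\}$ both nonempty and proper; hence $B=\mathfrak C_p$ for exactly one $p$. As $I_1'\subseteq I_1^{(p)}$ and $I_2'\subseteq I_2^{(p)}$, the conclusion of the $p$-th application of Lemma~\ref{simul-lemma} applies to $x$ and gives $|\alpha\cdot x|\ge\delta'\lambda_{p-1}\ge\delta'\lambda_P=\delta'\epsilon\ell$. Writing $\delta^*(\alpha):=\delta'\epsilon>0$, we conclude that $|\alpha\cdot x|\ge\delta\ell$ for every $\delta\le\delta^*(\alpha)$; this proves the proposition for all sufficiently small $\delta$, with $\epsilon(\delta)$ taken to be the ($\delta$-independent) constant $\epsilon$ and with $I_1',I_2'$ as constructed.

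Given Lemma~\ref{simul-lemma}, the remaining work is essentially bookkeeping, and the only subtle point --- the closest thing here to an obstacle --- is to see that running the lemma successively over all $2^v-2$ index subsets does not destroy the estimates obtained at earlier stages. This is exactly where the monotonicity of the construction is used: each application only shrinks the two working intervals, so a bound $|\alpha\cdot x|\ge\delta'\lambda_{p-1}$ established at stage $p$ remains valid over every later (smaller) pair of intervals, and likewise the separation $\ge\ell/4$ achieved at stage~$1$ is never lost. The only cost of the iteration is the factor $\prod_{p}\epsilon_0^{(p)}$ dropped from the lengths of $I_1',I_2'$, and a corresponding shrinkage of the admissible range of $\delta$; this is harmless since $\epsilon$ and $\delta^*(\alpha)$ depend only on $\alpha$ --- in particular not on $\ell$, nor, as noted in the remarks following Theorem~\ref{simul-thm}, on $K$.
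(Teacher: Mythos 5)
Your proposal is correct and follows essentially the same route as the paper: iterate Lemma~\ref{simul-lemma} once per nonempty proper subset $\mathfrak C_p$ of $\{1,\ldots,v\}$, nesting the working intervals downward from $(I_1,I_2)$, and take the final pair; the separation is won at the first stage and is monotone under shrinking. You actually handle the bookkeeping of the degrading bound more carefully than the paper does: the paper invokes the lemma with parameter $2\delta$ at each stage but at scale $\lambda_k=\epsilon_1\cdots\epsilon_{k-1}\ell/2$, then asserts $|\alpha\cdot x|\ge\delta\ell$ uniformly over $m\le k$, which the lemma literally yields only at $k=1$ (for $m\ge 2$ it gives $2\delta\lambda_m$); your device of fixing $\delta'$ depending only on $\alpha$, getting $|\alpha\cdot x|\ge\delta'\lambda_{p-1}\ge\delta'\epsilon\ell$, and then relabeling $\delta^*=\delta'\epsilon$ is the clean way to close this, and is exactly what the quantifier ``for every sufficiently small $\delta$'' is there to absorb. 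One small slip to fix: $\delta'=\tfrac12\min_p\bigl|\sum_{j\in\mathfrak C_p}\alpha_j\bigr|$ coincides with the threshold $\delta_0=C_p/2$ of Lemma~\ref{simul-lemma} for the minimizing $p$, whereas the lemma needs the \emph{strict} inequality $\delta<\delta_0$ (its proof requires $\epsilon_0<(C_0-2\delta)/(2C^{\ast})$, which is vacuous at $\delta=C_0/2$); replacing the factor $\tfrac12$ by, say, $\tfrac14$ makes every application of the lemma legal and changes nothing else.
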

\begin{proof}
Let $\{\mathfrak C_1, \mathfrak C_2, \cdots, \mathfrak C_R\}$ be an enumeration of all nonempty, strict subsets of $\{1, 2, \cdots, v\}$. Given any $x = (x_1, \cdots, x_v)$ such that $x_j \in I$ for all $j$ but not all $x_j$-s lie in a single $I_1$ or $I_2$, there exists $1 \leq m \leq R$ such that $j \in \mathfrak C_m$ if and only if $x_j \in I_2$.  
\vskip0.1in
\noindent Starting with $I_1$ and $I_2$, we apply Lemma \ref{simul-lemma} with $\mathfrak C = \mathfrak C_1$, $\mathfrak I_1 = I_1$, $\mathfrak I_2 = I_2$ and $\lambda = \ell/2$. For $2 \delta \leq \delta_0$, this gives a constant $\epsilon_1 = \epsilon_0(\mathfrak C_1, 2\delta)>0$ and two subintervals $I_1^{(1)} \subseteq I_1$ and $I_2^{(1)} \subseteq I_2$ of length $\epsilon_1 \ell/2$ obeying the conclusions of the lemma. For $2 \leq k \leq R$, we continue to apply Lemma \ref{simul-lemma} recursively, with \[\mathfrak C = \mathfrak C_k,\;  \mathfrak I_1 = I_1^{(k-1)}, \; \mathfrak I_2 = I_2^{(k-1)}, \; \lambda = \epsilon_1 \cdots \epsilon_{k-1} \ell/2.\]  At the end of the $k$-th step, this yields a constant $\epsilon_k = \epsilon_0(\mathfrak C_k, 2 \delta)$ and subintervals $I_1^{(k)} \subseteq I_1^{(k-1)} \subseteq I_1$, $I_2^{(k)} \subseteq I_2^{(k-1)} \subseteq I_2$ each of length $\epsilon_1 \cdots \epsilon_k \ell/2$ such that for any $m \leq k$, 
\[ |\alpha \cdot x| \geq \delta \ell \text{ for all $x$ such that } \begin{cases} x_j \in I_1^{(k)} & \text{ for }  j \not\in \mathfrak C_m, \\ x_j \in I_2^{(k)} &\text{ for }  j \in \mathfrak C_m.  \end{cases}  \] The conclusion of the proposition then holds for $I_1' = I_1^{(R)}$, $I_2' = I_2^{(R)}$ and $\epsilon = (\prod_{k=1}^{R}\epsilon_k)/2$.  The separation condition is an easy consequence of the one in Lemma \ref{simul-lemma}, since $I_i' \subseteq I_i^{(1)}$ for $i=1,2$.  
%
\end{proof}
\vskip0.1in 
{\em{Remarks: }} \begin{enumerate}[(a)] 
\item Tracking the parameters from Lemma \ref{simul-lemma}, we find that the constant $\epsilon$ claimed in Proposition \ref{mainprop-simul} obeys the estimate \begin{equation}  \epsilon \geq \frac{1}{2} \prod_{m=1}^R \frac{(C_m - 2 \delta)}{(2C^{\ast})}, \quad \text{ where }  C_m = |\sum_{j \in \mathfrak C_m} \alpha_j|. \label{eps-lower-bound} \end{equation}  
\vskip0.1in 
\item In view of the remarks made at the end of Lemma \ref{simul-lemma}, it is not surprising that the bound on $\epsilon$ in the preceding inequality is not always optimal. Returning to the example $\alpha = (1, -2, 1)$, we leave the reader to verify that given any small $\delta > 0$ and $I = [a, a+\ell]$, the choice $I_1' = [a, a+(1- \delta) \ell/3]$ and $I_2' = [a + (2+\delta)\ell/3,a+\ell]$ meets the requirements of the proposition. Thus for this $\alpha$, the best choice of $\epsilon$ is at least $(1 - \delta)/3$, which is much better than the one provided by the proof.  \label{eps-nonoptimal} 
\end{enumerate} 
\subsection{Proof of Theorem \ref{simul-thm}} \label{simul-thm-proof}
\begin{proof}
Fix a constant $\delta > 0$ arbitrarily small, and recall that $g(x_1, \cdots, x_v) = \sum_{j=1}^{v} \alpha_j x_j$ 
Start with $E_0 = [0, \eta]$ where $0 < \eta \ll 1$ is chosen sufficiently small so as to ensure $2Kv \eta < \delta$. Applying Proposition \ref{mainprop-simul} with $I = E_0$, 
we arrive at subintervals $I_1' = J_1 \subseteq [0, \eta/2]$ and $I_2' = J_2 \subseteq [\eta/2, \eta]$ of length $\epsilon \eta$ that obey its conclusions. Let $E_1 = J_1\cup J_2$ with $|J_1| = |J_2| = \ell_1$. In general, if $E_j$ is a disjoint union of $2^j$ basic intervals of length $\ell_j = \epsilon^j \eta$, then at step $(j+1)$, we apply Proposition \ref{mainprop-simul} to each such interval to find two subintervals of length $\ell_{j+1} = \epsilon \ell_{j} = \epsilon^{j+1} \eta$ and separated by a length of at least $\ell_j/4$, which form the basic intervals of $E_{j+1}$.
\vskip0.1in
\noindent  Defining $E = \cap_{j=1}^{\infty} E_j$, we now show that $f(x_1, \cdots, x_v) \ne 0$ if $x_1, \cdots, x_v$ are not all identical and $f$ is of the form \eqref{f-form}. For any such choice of $x_1, \cdots, x_v$, there exists a largest index $j$ such that $x_1, x_1, \cdots, x_v$ all lie in a basic interval $I$ at step $j$. This means that if $I_1'$ and $I_2'$ are the two subintervals of $I$ generated by Proposition \ref{mainprop-simul}, then $x_1, \cdots, x_v$ lie in $I_1' \cup I_2'$, but not all of them lie in a single $I_i'$. If $I$ is of length $\ell_j$, it follows from Proposition \ref{mainprop-simul} that $|g(x)| \geq \delta \ell_j$. But $|f(x) - g(x)| \leq Kv \ell_j^2$ according to \eqref{G}, so this implies $|f(x)| \geq \frac{\delta \ell_j}{2}$ for $\ell_j < \eta.$
\vskip0.1in 
\noindent We recall that the $(j+1)$th step of the construction generates exactly two children from each parent, and these are separated by at least $\ell_j/4$. It now follows from standard results (see for instance \cite{F97}, Example 4.6, page 64)  that the Hausdorff dimension of $E$ is bounded from below by 
\[ \lim_{j \rightarrow \infty}  \frac{\log(2^{j})}{-\log( 2\ell_j/4)} = \lim_{j \rightarrow \infty}  \frac{\log(2^{j})}{-\log(\epsilon^j \eta/2)} = \frac{\log 2}{-\log \epsilon}. \]
This establishes the existence of the set claimed by the theorem, with $c(\alpha) = \log 2/\log(\frac{1}{\epsilon})$, where $\epsilon$ is at least as large as the bound given in \eqref{eps-lower-bound}. 
\end{proof} 
\vskip0.1in 
\noindent {\em{Remark: }} We return to the example $\alpha = (1, -2, 1)$ that we have been following across this section to show that the avoiding set in this instance can be chosen to have Hausdorff dimension $\log 2/\log 3$. We have referred to this fact in certain examples occurring in Sections \ref{example-all} and \ref{discussion-optimality}.  
\vskip0.1in
\noindent Choose a slowly decreasing sequence $\delta_j = 1/(j+C)$, for some fixed large constant $C$. We have seen, in the remark (\ref{eps-nonoptimal}) following Proposition \ref{mainprop-simul}, that $\epsilon(\delta_j) = \epsilon_j$ can be chosen as $(1-\delta_j)/3$. Let us now use the same Cantor construction as in the proof given above, but using the parameter $\delta_j$ at step $j$ instead of a fixed $\delta$. The following consequences are immediate: 
\begin{align*} 
\ell_j &= \epsilon_1 \cdots \epsilon_j \eta \quad \text{ so that } \quad \ell_j \leq \frac{C \eta 3^{-j}}{j+C} . \\ 
|g(x)| &\geq \delta_j \ell_j \text{ and } |f(x) - g(x)| \leq Kv \ell_j^2, \text{ so that } \\ |f(x)| &\geq (\delta_j - Kv \ell_j) \ell_j \geq \Bigl (\frac{1}{j+C} - \frac{Kv \eta C}{j+C}  3^{-j} \Bigr) \ell_j > 0, 
\end{align*} 
where $x= (x_1,\cdots, x_v)$ is as in the second paragraph of Section \ref{simul-thm-proof}. This proves the nonexistence of nontrivial zeros of $f$. Further, the Hausdorff dimension is bounded from below by 
\[ \lim_{j \rightarrow \infty} \frac{\log(2^j)}{- \log(2 \ell_j/4)} = \lim_{j \rightarrow \infty} \frac{\log(2^j)}{- \log\bigl( 3^{-j} \eta \prod_{k=1}^{j} (1 - \delta_k)/2 \bigr)} = \frac{\log 2}{\log 3},  \]  
establishing the claim. 

\section{Appendix} 
\noindent We collect here the proofs of a few technical facts mentioned in Section \ref{ex-section}. 
\begin{lemma} \label{d-differentiable}
Given a $C^2$ parametrization $\gamma: [0,\eta] \rightarrow \mathbb R^n$ of a curve $\Gamma$, let us recall the definition of the signed distance function $d$ from \eqref{def-d}. Set $F(t_1, t_2) = d(\gamma(t_1), \gamma(t_2))$. Then 
\begin{enumerate}[(a)]
\item $F$ is differentiable on $[0,\eta]^2$. 
\item If $\gamma$ is the arclength parametrization, i.e., $|\gamma'(t)|\equiv 1$, then \[ \frac{\partial F}{\partial t_1}(t,t) = 1, \qquad \frac{\partial F}{\partial t_2}(t,t) = -1. \]  \label{special-gamma}
\end{enumerate} 
\end{lemma}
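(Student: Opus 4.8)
The plan is to reduce both parts to a single algebraic identity obtained from the fundamental theorem of calculus. I would write
\[ \gamma(t_1) - \gamma(t_2) = (t_1 - t_2)\, h(t_1, t_2), \qquad h(t_1, t_2) := \int_0^1 \gamma'\bigl(t_2 + s(t_1 - t_2)\bigr)\, ds, \]
and note that $h$ is $C^1$ on $[0,\eta]^2$ because $\gamma \in C^2$, and that $h(t, t) = \gamma'(t)$. Taking norms gives $|\gamma(t_1) - \gamma(t_2)| = |t_1 - t_2|\,|h(t_1, t_2)|$, and multiplying by $\operatorname{sgn}(t_1 - t_2)$ in accordance with the definition \eqref{def-d} yields the key formula
\[ F(t_1, t_2) = (t_1 - t_2)\,|h(t_1, t_2)|, \]
valid for $t_1 \ne t_2$ and extending continuously (with value $0$) to the diagonal. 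The essential gain is that the factor $(t_1-t_2)$ has absorbed the sign discontinuity, exhibiting $F$ as a product of the smooth function $(t_1-t_2)$ with $|h|$.

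For part (a) I would argue that, since $\gamma$ is a regular $C^2$ parametrization (nonvanishing derivative) and $\eta$ is small, $h$ is uniformly close to $\gamma'$ on $[0,\eta]^2$ and hence nowhere zero there. On the open set $\{h \ne 0\}$ the map $w \mapsto |w|$ is smooth, so $|h|$ is $C^1$, and therefore $F = (t_1-t_2)|h|$ is $C^1$ on $[0,\eta]^2$ — in fact slightly more than the asserted differentiability.

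For part (b) I would simply differentiate the product formula. Writing $\phi := |h|$, one gets $\partial F/\partial t_1 = \phi + (t_1-t_2)\,\partial\phi/\partial t_1$ and $\partial F/\partial t_2 = -\phi + (t_1-t_2)\,\partial\phi/\partial t_2$; evaluating on the diagonal kills the second term in each case, leaving $\partial F/\partial t_1(t,t) = \phi(t,t) = |h(t,t)| = |\gamma'(t)|$ and $\partial F/\partial t_2(t,t) = -|\gamma'(t)|$. Under the arclength normalization $|\gamma'| \equiv 1$ this is precisely the claimed pair of values.

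The only genuine obstacle is part (a): at first glance $F$ looks non-differentiable on the diagonal because $|\gamma(t_1)-\gamma(t_2)|$ has a corner there, and the content of the lemma is exactly that the sign factor in the definition of $d$ smooths this out. The divided-difference substitution is what makes this transparent; the remaining verifications (that $h \in C^1$, that $h$ is nonvanishing for $\eta$ small, and that $|\cdot|$ is smooth away from the origin) are routine.
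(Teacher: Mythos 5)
Your proof is correct, and it takes a genuinely different (and arguably slicker) route than the paper. The paper works directly from the definition of $d$: it Taylor-expands $\gamma$ at $t$ in the two cases $h \geq k$ and $h < k$, observes that in both cases the sign choice in \eqref{def-d} converts $|h-k|$ into $(h-k)$, and concludes $F(t+h,t+k) = |\gamma'(t)|(h-k) + O(h^2+k^2)$, which yields differentiability at the diagonal with the stated partials (off the diagonal differentiability is immediate). You instead factor out the divided difference via the Hadamard-type identity $\gamma(t_1)-\gamma(t_2) = (t_1-t_2)h(t_1,t_2)$ with $h$ a $C^1$ averaged derivative, so that $F = (t_1-t_2)|h|$ globally, and the sign discontinuity is absorbed into the smooth factor $(t_1-t_2)$. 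This buys you a cleaner structural picture and a slightly stronger conclusion (that $F$ is in fact $C^1$ on $[0,\eta]^2$ rather than merely differentiable), and part (b) then falls out by ordinary product-rule differentiation on the diagonal. Two small caveats worth making explicit if you write this up: first, nonvanishing of $h$ on the diagonal relies on $\gamma$ being a regular parametrization ($\gamma' \ne 0$), which the paper assumes in Section~\ref{ex-section} but does not repeat in the lemma statement; second, your appeal to ``$\eta$ small'' to get $h \ne 0$ off the diagonal is not strictly needed — for a simple curve $\gamma(t_1) \ne \gamma(t_2)$ whenever $t_1 \ne t_2$, so $h = (\gamma(t_1)-\gamma(t_2))/(t_1-t_2)$ is automatically nonzero there — though the smallness hypothesis is certainly available in context and makes the verification a one-liner.
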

\begin{proof} 
Since differentiability is obvious for $t_1 \ne t_2$, it suffices to verify it when $t_1 = t_2 = t$. We consider two cases. If $h \geq k$, then
\begin{IEEEeqnarray*}{rCl}
F(t+h, t+k) =  d(\gamma(t + h), \gamma(t + k)) & = & |\gamma(t + h) - \gamma(t + k)|\\
& = & |\gamma^{\prime}(t)| \abs{h - k} + O(h^2 + k^2)\\
& = & |\gamma^{\prime}(t)| (h - k) + O(h^2 + k^2).
\end{IEEEeqnarray*}
On the other hand if $h < k$, we have
\begin{IEEEeqnarray*}{rCl}
d(\gamma(t + h), \gamma(t + k)) & = & |\gamma(t + h) - \gamma(t + k)| \\
& = & - |\gamma^{\prime}(t)| \abs{h - k} + O(h^2 + k^2)\\
& = & |\gamma^{\prime}(t)| (h - k) + O(h^2 + k^2)
\end{IEEEeqnarray*}
This establishes the first part of the lemma, with \[ \frac{\partial F}{\partial t_1}(t,t) = |\gamma'(t)|, \qquad \frac{\partial F}{\partial t_2}(t,t) = - |\gamma'(t)|.\] The second part is now obvious.  
\end{proof} 
\begin{lemma}\label{order-lemma}
Let $\gamma:[0, \eta] \rightarrow \mathbb R^n$ be an injective parametrization of a $C^2$ curve with curvature at most $K$ and $\gamma'(0) \ne 0$. If $\eta$ is sufficiently small depending on $|\gamma^{\prime}(0)|$ and $K$, then there are no isosceles triangles $\gamma(t_1), \gamma(t_2), \gamma(t_3)$ with $0 \leq t_1 < t_2 < t_3 \leq \eta$ whose sides of equal length meet at $\gamma(t_1)$ or at $\gamma(t_3)$.
\end{lemma}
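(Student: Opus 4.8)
The plan is to show that, for $\eta$ small enough, the distance from either endpoint of the arc $\gamma([0,\eta])$ to a variable point of the arc is a \emph{strictly monotone} function of the parameter; the two forbidden isosceles configurations then contradict this monotonicity immediately.

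First I would reduce to the arclength parametrization. Since the conclusion depends only on the image $\gamma([0,\eta])$ together with the linear ordering it inherits from $[0,\eta]$, and both are unchanged if $\gamma$ is precomposed with an increasing $C^1$ diffeomorphism, we may assume $|\gamma'(t)| \equiv 1$ on $[0,\eta]$. In this gauge $\gamma \in C^2$ with $|\gamma''(t)| \le K$, the bound on $|\gamma''|$ being exactly the hypothesis that the curvature is at most $K$; shrinking $\eta$ at this stage costs only a factor depending on $|\gamma'(0)|$, as allowed by the statement.

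Next comes the key claim: if $K\eta < \pi/3$, then for each fixed $\tau \in [0,\eta]$ the function $s \mapsto |\gamma(s) - \gamma(\tau)|$ is strictly increasing on $[\tau,\eta]$ and strictly decreasing on $[0,\tau]$. To prove it, fix $s \ne \tau$ and differentiate the square:
\[ \frac{d}{ds}\,|\gamma(s)-\gamma(\tau)|^2 \;=\; 2\,\langle\,\gamma'(s),\ \gamma(s)-\gamma(\tau)\,\rangle \;=\; 2\int_{\tau}^{s} \langle\,\gamma'(s),\,\gamma'(u)\,\rangle\, du . \]
Because $|\gamma'|\equiv 1$, the quantity $\langle \gamma'(s),\gamma'(u)\rangle$ is the cosine of the angle between the unit vectors $\gamma'(s)$ and $\gamma'(u)$, i.e. their geodesic distance on $S^{n-1}$, which is at most the length $\int_u^s |\gamma''(w)|\,dw \le K\eta < \pi/3$ of the spherical arc traced by $w\mapsto\gamma'(w)$. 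Hence $\langle \gamma'(s),\gamma'(u)\rangle > \tfrac12$, the integrand has a fixed sign (positive for $s>\tau$, negative for $s<\tau$), and the displayed derivative is positive on $(\tau,\eta]$ and negative on $[0,\tau)$; together with continuity this yields the asserted strict monotonicity on the closed intervals. I expect this spherical-geometry estimate, $\angle(\gamma'(s),\gamma'(u)) \le \int_u^s |\gamma''|$, to be the only point needing care — and it is standard, since the angle between two points of $S^{n-1}$ equals their great-circle distance, which is bounded by the length of any connecting path.

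Finally I would conclude. If $\gamma(t_1),\gamma(t_2),\gamma(t_3)$ with $t_1<t_2<t_3$ in $[0,\eta]$ formed an isosceles triangle with the equal sides meeting at $\gamma(t_1)$, then $|\gamma(t_1)-\gamma(t_2)| = |\gamma(t_1)-\gamma(t_3)|$, contradicting the strict increase of $s\mapsto|\gamma(s)-\gamma(t_1)|$ on $[t_1,\eta]$ at $s=t_2 < t_3$. Symmetrically, if the equal sides met at $\gamma(t_3)$, then $|\gamma(t_1)-\gamma(t_3)| = |\gamma(t_2)-\gamma(t_3)|$, contradicting the strict decrease of $s\mapsto|\gamma(s)-\gamma(t_3)|$ on $[0,t_3]$ at $s=t_1 < t_2$. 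Choosing $\eta < \pi/(3K)$ (after the reduction to arclength) therefore suffices, completing the proof.
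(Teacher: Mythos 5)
Your proof is correct, and it pursues the same high-level strategy as the paper's: show that the Euclidean distance from $\gamma(t_1)$ (respectively $\gamma(t_3)$) to $\gamma(s)$ is strictly monotone in $s$, which immediately rules out the forbidden isosceles configurations. But the way you establish the monotonicity is genuinely different in its details and, I think, cleaner. The paper's proof keeps the given parametrization, invokes the differentiability of the signed distance (Lemma \ref{d-differentiable}), writes $d(\gamma(t_3),\gamma(t_1)) - d(\gamma(t_2),\gamma(t_1)) = \int_{t_2}^{t_3}\gamma'(t)\cdot\frac{\gamma(t)-\gamma(t_1)}{|\gamma(t)-\gamma(t_1)|}\,dt$, and then bounds the integrand below by $|\gamma'(0)|/2$ via two nested Taylor expansions (around $t_1$ and around $0$) with $O(K\eta)$ error terms. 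You instead reduce to arclength parametrization, differentiate the \emph{squared} distance (thereby sidestepping the need for the signed-distance differentiability lemma altogether), and use the exact identity $\frac{d}{ds}|\gamma(s)-\gamma(\tau)|^2 = 2\int_\tau^s\langle\gamma'(s),\gamma'(u)\rangle\,du$ together with the spherical-geometry bound: the angle between the unit tangents $\gamma'(s)$ and $\gamma'(u)$ is at most the total curvature $\int_u^s|\gamma''|\le K|s-u|$, hence $\langle\gamma'(s),\gamma'(u)\rangle > 1/2$ once $K\eta<\pi/3$. This buys you an explicit quantitative threshold in place of an unspecified ``$\eta$ sufficiently small,'' and it replaces the loose big-$O$ bookkeeping with a crisp geometric inequality. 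The one soft spot is the reduction itself: the arclength $L$ of $\gamma([0,\eta])$ depends on $\sup_{[0,\eta]}|\gamma'|$, not merely $|\gamma'(0)|$, so translating $KL<\pi/3$ back into a condition on the original $\eta$ involves parameters beyond $|\gamma'(0)|$ and $K$ unless one also invokes continuity of $\gamma'$. But the paper's own Taylor remainders have the identical imprecision, so this is not a gap relative to the paper's proof; both implicitly allow ``sufficiently small'' to absorb mild dependence on the $C^2$ data.
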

\begin{proof}
Since $d$ has already been shown to be differentiable in the previous lemma, we compute 
\begin{IEEEeqnarray}{rCl}
d(\gamma(t_3), \gamma(t_1)) - d(\gamma(t_2), \gamma(t_1)) & = & \int_{t_2}^{t_3} \frac{\partial}{\partial t} d(\gamma(t), \gamma(t_1)) \nonumber \\
 & = & \int_{t_2}^{t_3} \gamma^{\prime}(t) \cdot \frac{\gamma(t) - \gamma(t_1)}{|\gamma(t) - \gamma(t_1)|}. \label{int}
\end{IEEEeqnarray}
For $t, t_1 \in [0, \eta]$ with $t > t_1$, we obtain
\begin{align*} \frac{\gamma(t) - \gamma(t_1)}{|\gamma(t) - \gamma(t_1)|} &= \frac{\left[ \gamma'(t_1) (t-t_1) + O(K(t-t_1)^2)  \right]}{|\left[ \gamma'(t_1) (t-t_1) + O(K(t-t_1)^2)  \right]|} \\ &= \frac{\gamma'(t_1) + O(K \eta)}{|\gamma'(t_1) + O(K \eta)|} \\ &= \frac{\gamma'(0) + O(K \eta)}{|\gamma'(0) + O(K \eta)|} \\
 &= \frac{\gamma'(0)}{|\gamma'(0)|} \left[ 1+ O\left( \frac{K \eta}{|\gamma'(0)|}\right) \right].   
\end{align*} 
Using this, the integrand in \eqref{int} may be estimated as follows, 
\[ \gamma'(t) \cdot \frac{\gamma(t) - \gamma(t_1)}{|\gamma(t) - \gamma(t_1)|} = \left[\gamma'(0) + O(K\eta) \right] \cdot \frac{\gamma'(0)}{|\gamma'(0)|} \left[1 + O\left( \frac{K \eta}{|\gamma'(0)|}\right) \right] \geq \frac{|\gamma'(0)|}{2} \neq 0,   \]
provided $\eta$ is small relative to $K$ and $|\gamma'(0)|$.   
This shows that \[d(\gamma(t_3), \gamma(t_1)) - d(\gamma(t_2), \gamma(t_1)) \geq |\gamma'(0)| (t_3-t_2)/2 \ne 0,\] proving that $\gamma(t_1)$ cannot be the vertex at the intersection of two equal sides in an isosceles triangle. A similar argument works for $\gamma(t_3)$.
\end{proof}

\begin{lemma} \label{Jacobian-check} 
Given a curve $\Gamma$ as described in Section \ref{trapezoid-section}, let us recall the function $f = (f_1, f_2)$ given by \eqref{def-f1-trapezoid} and \eqref{def-f2-trapezoid}. Then $Df$ is of full rank whenever $f = 0$. 
\end{lemma}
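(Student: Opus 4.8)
The strategy is to compute the $2 \times 4$ Jacobian matrix $Df$ at an arbitrary point $(t_1,t_2,t_3,t_4)$ with $t_1 < t_2 < t_3 < t_4$ lying in the zero set of $f$, and show that the two rows $\nabla f_1$ and $\nabla f_2$ are linearly independent there. Since $f_1$ depends only on $(t_1,t_4)$ through the argument of $\gamma(t_4)-\gamma(t_1)$ and only on $(t_2,t_3)$ through the argument of $\gamma(t_3)-\gamma(t_2)$, while $f_2$ depends on all four variables through signed distances, I expect to exhibit a $2\times 2$ minor of $Df$ that is manifestly nonzero — the cleanest choice being the minor formed by the columns indexed by $t_1$ and $t_4$, or by $t_2$ and $t_3$.

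**Key steps.** First I would record the partial derivatives of the argument function: for a planar curve, $\frac{\partial}{\partial s}\,\text{arg}(\gamma(s) - \gamma(u))$ and $\frac{\partial}{\partial u}\,\text{arg}(\gamma(s)-\gamma(u))$ can be written explicitly in terms of $\gamma'$ and the unit vector along $\gamma(s)-\gamma(u)$; the essential point is that since $\gamma$ has strictly convex, nonvanishing curvature and all components of $\gamma'$ are positive, the chord direction $\gamma(s)-\gamma(u)$ is never parallel to the tangent $\gamma'(s)$ or $\gamma'(u)$, so these partials are nonzero. Concretely, $\partial_{t_4} f_1 = \partial_{t_4}\,\text{arg}(\gamma(t_4)-\gamma(t_1)) \neq 0$ and $\partial_{t_2} f_1 = -\partial_{t_2}\,\text{arg}(\gamma(t_3)-\gamma(t_2)) \neq 0$, while $\partial_{t_2} f_1 = \partial_{t_3} f_1$ composition structure means $f_1$ does not involve $t_1$ and $t_4$ symmetrically with $t_2,t_3$. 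Second, using Lemma \ref{d-differentiable}, I would compute the partials of $f_2$: writing $d_{43} = d(\gamma(t_4),\gamma(t_3))$, $d_{21} = d(\gamma(t_2),\gamma(t_1))$, $d_{32} = d(\gamma(t_3),\gamma(t_2))$, we get $\partial_{t_4} f_2 = d_{21}\,\partial_{t_4} d_{43}$ and $\partial_{t_1} f_2 = d_{43}\,\partial_{t_1} d_{21}$, each of which is nonzero because on the zero set $f_2=0$ forces $d_{32}^2 = d_{21} d_{43} > 0$, so neither $d_{21}$ nor $d_{43}$ vanishes, and $\partial_{t_4} d_{43}$, $\partial_{t_1} d_{21}$ are nonzero by the differentiability computation (they equal $\pm|\gamma'|$-type quantities away from the diagonal). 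Third, I would assemble the $2 \times 2$ minor of $Df$ in the columns $t_1$ and $t_4$:
\[
\det \begin{pmatrix} \partial_{t_1} f_1 & \partial_{t_4} f_1 \\ \partial_{t_1} f_2 & \partial_{t_4} f_2 \end{pmatrix} = \det \begin{pmatrix} \partial_{t_1}\,\text{arg}(\gamma(t_4)-\gamma(t_1)) & \partial_{t_4}\,\text{arg}(\gamma(t_4)-\gamma(t_1)) \\ d_{43}\,\partial_{t_1} d_{21} & d_{21}\,\partial_{t_4} d_{43} \end{pmatrix},
\]
and argue it is nonzero. Expanding, the determinant is $\partial_{t_1}\text{arg}\cdot d_{21}\partial_{t_4} d_{43} - \partial_{t_4}\text{arg}\cdot d_{43}\partial_{t_1} d_{21}$; I would show these two products cannot cancel by a sign/monotonicity argument — under the convexity and positivity hypotheses on $\gamma$, the arg-partials have a definite sign, the $d$-partials have a definite sign, and $d_{21}, d_{43}$ are both positive, so the two terms have opposite overall signs and cannot sum to zero.

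**Main obstacle.** The delicate point is the sign bookkeeping in the final determinant: one must verify that under the stated hypotheses (strictly positive components of $\gamma'$, constant-sign curvature, strict convexity) the four quantities $\partial_{t_1}\text{arg}(\gamma(t_4)-\gamma(t_1))$, $\partial_{t_4}\text{arg}(\gamma(t_4)-\gamma(t_1))$, $\partial_{t_1} d_{21}$, $\partial_{t_4} d_{43}$ have signs that force the two terms of the expanded determinant to have the same sign (hence a nonzero sum). This is where the geometric hypotheses on $\Gamma$ are genuinely used; without convexity one could conceivably arrange cancellation. If this direct sign argument proves awkward, a fallback is to note that $f_1$ is independent of the pair $(t_2,t_3)$ only through one scalar and of $(t_1,t_4)$ only through another, so $\nabla f_1$ has a rigid structure, and then observe that $\nabla f_2$ cannot be a scalar multiple of $\nabla f_1$ because $f_2$'s dependence on $t_1$ (via $d_{21}$) and on $t_3$ (via both $d_{43}$... wait, $d_{32}$) mixes the two ``blocks'' of variables that $f_1$ keeps separate — so proportionality of the gradients would force $d_{21}$ or $d_{43}$ to vanish, contradicting $f_2 = 0$ with $d_{32} \neq 0$ (which holds since the four points are distinct and $\Gamma$ is a genuine curve). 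I would present whichever of these two routes yields the cleanest writeup.
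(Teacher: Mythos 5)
Your proposal is correct and takes essentially the same route as the paper: both single out the $2\times 2$ minor of $Df$ in the columns $t_1,t_4$, compute the arg-partials of $f_1$ (via an arctan formula) and the signed-distance partials of $f_2$ (via Lemma~\ref{d-differentiable}), and conclude by a sign analysis using strict convexity and positivity of the components of $\gamma'$. The paper's phrasing of the final step—$\partial f_1/\partial t_1$ and $\partial f_1/\partial t_4$ have the same sign while $\partial f_2/\partial t_1$ and $\partial f_2/\partial t_4$ have opposite signs—is exactly the sign bookkeeping you identify as the main obstacle, so the two arguments coincide in substance.
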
 
\begin{proof} 
To prove that $Df$ has rank  2 on the zero set of $f$, it suffices to show that the $2 \times 2$ submatrix with entries $\partial f_i/\partial t_j$ with $i=1,2$ and $j=1,4$ is nonsingular. We will do this by proving that $\partial f_1/\partial t_j$ are nonzero and of the same sign for $j = 1,4$, whereas for $\partial f_2/\partial t_j$ the signs are reversed. Let us observe that $f_1$ is nonzero if $t_4-t_1$ and $t_3-t_2$ have opposite signs, whereas $f_2$ is nonzero if $t_4-t_3$ and $t_2-t_1$ have opposite signs. In what follows, we will therefore restrict to the case where $(t_4-t_1)(t_3-t_2) > 0$ and $(t_4-t_3)(t_2-t_1) > 0$. 
\vskip0.1in  
\noindent We begin by computing ${\partial f_1}/{\partial t_1}$. Let $(x_1, y_1) = \gamma(t_1)$ and $(x_4, y_4) = \gamma(t_4)$. For $t_1 < t_4$, our assumptions on $\gamma$ dictate that $\gamma(t_4)  - \gamma(t_1)$ lies in the first quadrant, so that
\[
\text{arg}(\gamma(t_4) - \gamma(t_1)) =  \arctan \left( \frac{y_4 - y_1}{x_4 - x_1} \right); \text{ this leads to }   
\]
\begin{IEEEeqnarray*}{rCl}
\frac{\partial f_1}{\partial t_1} =  \frac{\partial}{\partial t_1} \left[\text{arg}( \gamma(t_4) - \gamma(t_1)) \right]& = & \frac{1}{1 + \left( \frac{y_4 - y_1}{x_4 - x_1} \right)^2} \frac{- (x_4 - x_1) y^{\prime}(t_1) + (y_4 - y_1)x^{\prime}(t_1)}{(x_4 - x_1)^2} \\
& = & \frac{1}{1 + \left(\frac{y_4 - y_1}{x_4 - x_1}\right)^2} \frac{(y_4 - y_1)x^{\prime}(t_1) - (x_4 - x_1)y^{\prime}(t_1)}{(x_4 - x_1)^2}.
\end{IEEEeqnarray*}
For $t_1 < t_4$, the above expression is positive for a strictly convex curve $\Gamma$ of the type we have assumed. For $t_1 > t_4$ the sign is reversed.  
It is easily seen that ${\partial f_1}/{\partial t_4}$ essentially has the same expression as ${\partial f_1}/{\partial t_1}$, but the numerator has opposite sign and $x^{\prime}$ and $y^{\prime}$ are evaluated at $t_4$ instead of at $t_1$. 
Strict convexity of $\Gamma$ therefore implies that ${\partial f_1}/{\partial t_1}$ and ${\partial f_1}/{\partial t_4}$ have the same sign.
\vskip0.1in
\noindent Now let us consider $\partial f_2/\partial t_j$ for $j=1,4$. We find that 
\begin{align*} \frac{\partial}{\partial t_4}d(\gamma(t_4), \gamma(t_3))  &= \gamma^{\prime}(t_4) \cdot \frac{\gamma(t_4) - \gamma(t_3)}{|\gamma(t_4) - \gamma(t_3)|}, \; \text{ so } \\ \frac{\partial f_2}{\partial t_4} &= \gamma^{\prime}(t_4) \cdot \frac{\gamma(t_4) - \gamma(t_3)}{|\gamma(t_4) - \gamma(t_3)|} d(\gamma(t_2), \gamma(t_1)). \\
\text{Similarly } \frac{\partial f_2}{\partial t_1} &= - \gamma^{\prime}(t_1) \cdot \frac{\gamma(t_2) - \gamma(t_1)}{|\gamma(t_2) - \gamma(t_1)|} d(\gamma(t_4), \gamma(t_3)). \end{align*}   
In the regime where $(t_4-t_3)(t_2-t_1) > 0$, these two quantities are of opposite signs, completing the proof. 
\end{proof}
\bibliographystyle{plain}

\vskip0.2in
\noindent \author{\textsc{Robert Fraser}}\\
University of British Columbia, Vancouver, Canada. \\
Electronic address: \texttt{rgf@math.ubc.ca}
\vskip0.2in 
\noindent \author{\textsc{Malabika Pramanik}}\\
University of British Columbia, Vancouver, Canada. \\
Electronic address: \texttt{malabika@math.ubc.ca}


\begin{thebibliography}{99}

\bibitem{B46} Behrend, F. A.
\newblock{\em On sets of integers which contain no three terms in
              arithmetical progression}. 
\newblock Proc. Nat. Acad. Sci. U. S. A., {\bf 32}, (1946), 331-332.

\bibitem{BIT16} Bennett, M. and Iosevich, A. and Taylor K.
\newblock{\em Finite chains inside thin subsets of ℝd.Finite chains inside thin subsets of $\mathbb R^d$}. 
\newblock Anal. PDE, {\bf 9}, No. 3, (2016), 597-614. 


\bibitem{CLP} Chan, V. and {\L}aba, I. and Pramanik, M. 
\newblock{\em Point configurations in sparse sets}. 
\newblock J. d'Analyse Math., {\bf 128}, No. 1, (2016), 289-335. 

\bibitem{F97} Falconer, K. 
\newblock{\em Techniques in fractal geometry}.
John Wiley \& Sons, Ltd., Chichester, 1997, ISBN 0-471-95724-0.  

\bibitem{GI12} Greenleaf, A. and Iosevich, A. 
\newblock{\em On triangles determined by subsets of the Euclidean plane, the associated bilinear operators and applications to discrete geometry}.  
\newblock Anal. PDE, {\bf 5}, No. 2, (2012), 397-409. 



\bibitem{GILP15} Greenleaf, A. and Iosevich, A. and Liu, B. and Palsson E. 
\newblock{\em A group-theoretic viewpoint on Erdős-Falconer problems and the Mattila integral}. 
\newblock  Rev. Mat. Iberoam., {\bf 31}, No. 3,  (2015), 799–810.


\bibitem{GIP16} Greenleaf, A. and Iosevich, A. and Pramanik, M. 
\newblock{\em On necklaces inside thin subsets of $\mathbb R^d$}.
\newblock To appear in Math. Res. Lett., available at http://arxiv.org/pdf/1409.2588v1.pdf.  


\bibitem{HKKMMMS13} Harangi, V. and Keleti, T. and Kiss, G. and
              Maga, P. and M{\'a}th{\'e}, A. and Mattila,
              P. and Strenner, B. 
\newblock{\em How large dimension guarantees a given angle?}
\newblock Monatsh. Math., {\bf 171}, No. 2, (2013), 169-187.   


\bibitem{HLP} Henriot, K. and {\L}aba, I. and Pramanik, M.  
\newblock{\em On polynomial configurations in fractal sets}. 
\newblock to appear in Anal. PDE.  



\bibitem{K98} Keleti, T.
\newblock{\em A 1-dimensional subset of the reals that intersects each of
              its translates in at most a single point}. 
\newblock Real Anal. Exchange, {\bf 24}, No. 2, (1998/99), 843-844.



\bibitem{K08} Keleti, T. 
\newblock{\em Construction of one-dimensional subsets of the reals not
              containing similar copies of given patterns}.
\newblock Anal. PDE, {\bf 1}, No. 1, (2008), 29-33.  


\bibitem{LP09} {\L}aba, I. and Pramanik, M. 
\newblock{\em Arithmetic progressions in sets of fractional dimension}. 
\newblock Geom. Funct. Anal., {\bf 19}, No. 2, (2009), 429-456.  


\bibitem{M10} Maga, P. 
\newblock{\em Full dimensional sets without given patterns}. 
\newblock Real Anal. Exchange, {\bf 36}, No. 1, (2010/11), 79-90.


\bibitem{M12} M{\'a}th{\'e}, A.
\newblock{\em Sets of large dimension not containing polynomial configurations}.  
\newblock (2012), available at http://arxiv.org/pdf/1201.0548v1.pdf. 

\bibitem{SS42} Salem, R. and Spencer, D. C.
\newblock{\em On sets of integers which contain no three terms in
              arithmetical progression}.
\newblock Proc. Nat. Acad. Sci. U. S. A., {\bf 28}, (1942), 561-563. 

\bibitem{S75} Szemer{\'e}di, E.
\newblock{\em On sets of integers containing no {$k$} elements in arithmetic
              progression}.
\newblock Acta Arith., Collection of articles in memory of Juri{\u\i}
              Vladimirovi{\v{c}} Linnik, {\bf 27}, (1975), 199-245.  
\end{thebibliography}
\end{document}